\documentclass{article}
\usepackage{amssymb,amsmath,amsthm,tikz-cd,graphicx,mathtools,framed,stmaryrd}
\usepackage{xcolor,float,hhline,multirow,colortbl,bbm,tabularx,array}
\usepackage{titlesec}
\usepackage{lipsum}
\usepackage[top=2cm,bottom=2cm,left=3cm,right=3cm,includefoot]{geometry}
\usepackage[nottoc]{tocbibind}
\usepackage{hyperref}
\usepackage[title]{appendix}
\hypersetup{
 colorlinks=true,
 citecolor=red,
 linkcolor=blue,
 urlcolor=blue,
}
\usepackage{tikz,tikz-3dplot}
\tdplotsetmaincoords{60}{110}
\pgfmathsetmacro{\rvec}{2}
\pgfmathsetmacro{\thetavec}{45}
\pgfmathsetmacro{\phivec}{45}

\theoremstyle{plain}
\newtheorem{thm}{Theorem}[section]
\newtheorem{lem}[thm]{Lemma}
\newtheorem{cor}[thm]{Corollary}
\newtheorem{prop}[thm]{Proposition}

\theoremstyle{definition}
\newtheorem{defn}[thm]{Definition}
\newtheorem{exa}[thm]{Example}
\newtheorem{rem}[thm]{Remark}
\newtheorem{nota}[thm]{Notation}
\newtheorem{const}[thm]{Construction}
\newtheorem{obs}[thm]{Observation}
\newtheorem{conv}[thm]{Convention}
\newtheorem{ques}[thm]{Question}

\newcommand{\RNum}[1]{\uppercase\expandafter{\romannumeral #1\relax}}
\newcommand{\rNum}[1]{\lowercase\expandafter{\romannumeral #1\relax}}
\newcommand{\salash}{\mathbin{/\mkern-3mu/}}

\allowdisplaybreaks

\begin{document}
\baselineskip=0.7cm

% Title page
\begin{titlepage}
\begin{center}
\vspace*{1cm}
{\Huge
\textbf{Categorical Hopf map}}
\\[1.5cm]
{\Large by \\ Ali Khalili Samani \\
\href{https://orcid.org/0009-0008-1906-5097}{ORCID: 0009-0008-1906-5097}}
\vfill
{\Large A thesis submitted in total fulfilment for the \\
degree of Master of Philosophy}
\\[1cm]
{\Large In the \\
School of Mathematics and Statistics \\
\textbf{The University of Melbourne}}
\\[1cm]
{\Large May 2026}
\end{center}
\end{titlepage}

\newpage
\pagenumbering{roman}
\section*{\centering Abstract}
\addcontentsline{toc}{section}{Abstract}
\begin{center}
{\Large The University of Melbourne \\
School of Mathematics and Statistics \\[1cm]
by Ali Khalili Samani \\
\href{https://orcid.org/0009-0008-1906-5097}{ORCID: 0009-0008-1906-5097}} \\[1.5cm]
\end{center}
We introduce the categorical Hopf map as a categorical principal bundle over the two-dimensional sphere with fibre the categorical circle $\mathcal{U}(1)$ of \cite{MR3764535}. We investigate its connection to the Hopf map. We present a factorisation of the categorical Hopf map through the Hopf map and the basic bundle gerbe over the three-dimensional sphere. We discuss three equivalent constructions for the basic bundle gerbe over the three-dimensional sphere. Finally, we conjecture that the categorical group $String(3)$ is equivalent to the categorical group of symmetries of the categorical Hopf map.

\newpage
\section*{\centering{\huge Declaration of authorship}}
\addcontentsline{toc}{section}{Declaration of authorship}
{\large I, Ali Khalili Samani, declare that this thesis titled 'Categorical Hopf map' and the work presented in it are my own. I confirm that \\
$\bullet$ this thesis comprises only my original work towards a Master of Philosophy except where indicated. \\
$\bullet$ acknowledgement has been made in the text to all other material used. \\
$\bullet$ this thesis is fewer than 50,000 words exclusive of table of contents, introduction, figures, and appendices. \\[1.5cm]
Signature: \\
Date:}

\newpage
\section*{\centering \huge Acknowledgements}
\addcontentsline{toc}{section}{Acknowledgements}
I would like to express my sincere gratitude to my supervisor, Nora Ganter, for her continuous support, guidance, and encouragement throughout this project. I am also grateful to my committee members, Christian Haesemeyer and David Ridout, for their support. I particularly thank Christian for the insightful meetings.

I also wish to thank Daniel Murfet, my former committee chair, for his support during the earlier stages of my work. I am thankful for the faculty members and friends in the School of Mathematics and Statistics, particularly Arun Ram, with whom I had stimulating and meaningful discussions.

I gratefully acknowledge the support of the University of Melbourne Graduate Research Scholarship, which made this work possible.

Finally, I extend my heartfelt thanks to my parents and my wife Fatemeh, whose constant support, patience, and encouragement have been invaluable throughout my academic journey.

\newpage
% Table of contents
\tableofcontents

\newpage
\section{Introduction}
Categorical principal bundles generalise the notion of both principal bundles and bundle gerbes. For a Lie group $G$ and a manifold $M$, principal $G$-bundles over $M$ can be classified by homotopy classes of maps $M \longrightarrow BG$, where $BG$ denotes the classifying space of $G$. Equivalently, principal $G$-bundles over $M$ can be understood through the \v{C}ech cohomology group $\check{H}^1(M,G)$. A classical example is the Hopf map $S^3 \longrightarrow S^2$, introduced by Hopf in \cite{MR1512691}, which represents the first known non-trivial map between spheres of different dimensions. The Hopf map remains a fundamental example of a non-trivial principal $U(1)$-bundle, and plays a central role in topology, geometry, and mathematical physics. \\
\indent
At a higher categorical level, one encounters bundle gerbes, which provide geometric models classified for degree-three cohomology classes. Specifically bundle gerbes over $M$ are classified by $H^3(M;\mathbb{Z})$, with the Dixmier-Douady class serving as their primary invariant. Bundle gerbes were introduced by Murray \cite{MR1669206} as a geometric model for degree-three cohomology. Relatedly, Brylinski \cite{MR1197353} developed the notion of gerbes in the context of Deligne cohomology, providing a sheaf-theoretic and categorical perspective on similar objects. \\
\indent
Such structures are especially relevant in the study of topological groups and their representations, which have far-reaching applications in geometry, topology, and quantum field theory (see for example \cite{MR198828}). Among these groups, the Lie groups $SO(n)$ and $Spin(n)$, along with the infinite-dimensional group $String(n)$ introduced by Stolz in \cite{MR1380455} play a pivotal role. They arise as successive covering groups in the Whitehead tower associated with the orthogonal group $O(n)$:
\[
\begin{tikzcd}
\cdots \arrow[r] & String(n) \arrow[r] & Spin(n) \arrow[r] & SO(n) \arrow[r] & O(n).
\end{tikzcd}
\]
Here, $Spin(n)$ is the double cover of $SO(n)$, and $String(n)$ is the $3$-connected cover of $Spin(n)$. \\ 
\indent
A significant development arose when mathematicians and physicists began to investigate the geometric and topological properties of manifolds whose frame bundle admits a lift of its structure group from $SO(n)$ to $Spin(n)$ or further to $String(n)$. Such refinements are intimately related to topics such as spin cobordism, the Witten genus, and generalised cohomology theories. For a comprehensive historical overview, see Sati and Schreiber \cite{MR2588823}.\\
\indent
Traditionally, $String(n)$ has been constructed as an extension of $Spin(n)$ by the Eilenberg-MacLane space $K(\mathbb{Z},2)$:
\[
0 \longrightarrow K(\mathbb{Z},2) \longrightarrow String(n) \longrightarrow Spin(n) \longrightarrow 0,
\]
yielding an infinite-dimensional topological group. In contrast, Schommer-Pries \cite{MR2800361} developed a finite-dimensional categorical model of $String(n)$ by realising it as a non-trivial extension of $Spin(n)$ by the categorical group $\mathbb{B}U(1)$:
\[
0 \longrightarrow \mathbb{B}U(1) \longrightarrow String(n) \longrightarrow Spin(n) \longrightarrow 0.
\]
This construction yields a model for $String(n)$ as a categorical group. It does not, however , offer an explicit description of $String(n)$ as symmetries of some mathematical objects. Related perspective may be found in the work of Fiorenza, Rogers , Schreiber \cite[Section 4.1]{MR3535115}, as well as in the work of Bunk, Müller,, Szabo \cite{MR4268834}. \\
\indent
In this thesis, we focus on the case $n = 3$. We expect that the categorical Lie group $String(3)$ will play a foundational role in the representation theory of categorical groups, much like $SU(2)$, or equivalently $Spin(3)$, serves as a cornerstone in the representation theory of compact Lie groups. \\
\indent
Our approach is motivated by the work of Bartels \cite{MR2709030}, who introduced the concept of categorical principal bundles. Building on his framework, together with the formalism for constructing categorical bundles developed in \cite{MR2805195}, we construct the categorical Hopf map, a categorical principal bundle over $S^2$ whose fibre is the categorical circle $\mathcal{U}(1)$ as defined in \cite{MR3764535}. Furthermore, we conjecture a construction of $String(3)$ that is both explicit and comparatively elementary. \\
The structure of this work is as follows: \\
$\bullet$ Section $2$:  We begin by recalling three classical perspectives on the Hopf map. We then review the notion of an Ehresmann connection on a principal bundle, which we use implicitly to identify the group of symmetries of the Hopf map with the group $Spin^c(3)$ in section $6$. The remainder of this section collects the categorical preliminaries; we review the definition of categorical bundles and the notion of a connection on categorical bundles. \\
$\bullet$ Section $3$: We revisit the definition of bundle gerbes and discuss the correspondence between bundle gerbes and principal $\mathbb{B}U(1)$-bundles. We also present an equivalent construction of the basic bundle gerbe over $S^3$. The other equivalent construction is given in section $5$. \\
$\bullet$ Section $4$:  We develop tools for categorifying the transition function of the Hopf map. \\
$\bullet$ Section $5$: We construct the categorical Hopf map in detail. We describe a factorisation of the categorical Hopf map through the classical Hopf map. \\
$\bullet$ Section $6$: We conjecture how $String(3)$ can be realised as the categorical group of symmetries of the categorical Hopf map. \\
$\bullet$ Appendix: In this section, we provide supporting calculations. We also construct explicit open covers of $S^2$ adapted to some finite subgroups of $SO(3)$. Specifically, we use a four-open cover compatible with the action of the tetrahedral group on $S^2$, while in Section $5$ we employed a six-open cover compatible with the action of the octahedral group on $S^2$.
\begin{conv}
Throughout, we adopt the following conventions: \\
$\bullet$ $U(1)$ denotes the group of $1 \times 1$ unitary matrices, i.e., the complex unit circle. \\
$\bullet$ We denote by $\left\{\mathbbm{1}\right\}$ the trivial group consisting of a single element $\mathbbm{1}$. \\
$\bullet$ $S^n$ denotes the unit $n$-sphere for $n \in \mathbb{Z}^+$. \\
$\bullet$ We use the term "categorical" rather than the prefix "$2$". For example, we write categorical group or categorical space in place of weak $2$-group or $2$-space. \\
$\bullet$ When expressing a point $(x,y,z,w) \in S^3$ as a unit quaternion, we write it as
\[
x\mathsf{i}+y\mathsf{j}+z\mathsf{k}+w.
\]
Accordingly, a point $(X,Y,Z) \in S^2$ is written as
\[
X\mathsf{i}+Y\mathsf{j}+Z\mathsf{k}.
\]
\end{conv}

\newpage
\pagenumbering{arabic}
\section{Background}
In this section, we revisit three perspectives on the Hopf map $$\eta:S^3 \longrightarrow S^2,$$
For detailed discussions and the equivalences between these constructions, see \cite[on page 15]{MR3012377}. A key feature of the Hopf map is that it defines a principal $U(1)$-bundle over $S^2$.\\
$1)$ It is well known that $S^2$ is homeomorphic to the projective line $\mathbb{CP}^1$ via the composition of the maps
\[
f_1:S^2 \longrightarrow \mathbb{C} \cup \left\{\infty\right\}
\]
and
\[
f_2:\mathbb{C} \cup \left\{\infty\right\} \longrightarrow \mathbb{CP}^1,
\]
where
\[
f_1(X,Y,Z) = \frac{X}{1-Z} + \frac{Y}{1-Z}\mathsf{i}
\]
for $(X,Y,Z) \neq (0,0,1)$ and $f_1(0,0,1)=\infty$, and
\[
f_2(z) = [z:1]
\]
for $z \neq \infty$ and $f_2(\infty) = [1:0]$. We define $f = f_2 \circ f_1$. \\
We identify $S^3$ with
\[
\left\{(z_1,z_2) \in \mathbb{C}^2 ~\big\vert~ \lvert z_1 \rvert^2 + \lvert z_2 \rvert^2=1\right\}.
\]
In complex coordinates, the Hopf map is given by
\[
\eta:S^3 \longrightarrow S^2 \simeq \mathbb{CP}^1
\]
which sends a point $(z_1,z_2) \in S^3$ to its projective class $[z_1:z_2] \in \mathbb{CP}^1$. \\
$2)$ To construct $\eta$ using local trivialisation data, consider the cover
$\left\{V_N,V_S\right\}$ of $S^2$, where
\begin{align*}
V_N & = \left\{[z_1:z_2] \in S^2 ~ \big\vert z_1 \neq 0\right\} \\
V_S & = \left\{[z_1:z_2] \in S^2 ~ \big\vert z_2 \neq 0\right\}
\end{align*}
On the overlap $V_N \cap V_S$, the transition function is defined by the composition
\[
V_N \cap V_S \longrightarrow \mathbb{C}^\ast \longrightarrow U(1),
\]
which sends $[z_1:z_2] \in V_N \cap V_S$ first to $\frac{z_1}{z_2} \in \mathbb{C}^\ast$, and then to $\frac{z_1}{z_2} \cdot \big\vert \frac{z_2}{z_1}\big\vert \in U(1)$. Following \cite[on page 583]{MR2805195}, one may construct the total space of the principal bundle associated with this transition data. \\
The Hopf map corresponds to the generator of
\[
H^2(S^2;\mathbb{Z}) \cong \mathbb{Z}.
\]
Throughout this thesis, we adopt the positive generator $1$. \\
$3)$ In real coordinates, the Hopf map is given by\footnote{See Remark \ref{kernelofhopf}.}
\begin{align*}
\eta:S^3 \subset \mathbb{R}^4 &\longrightarrow S^2 \subset \mathbb{R}^3 \\
(x,y,z,w) &\longmapsto \left(2(xz+yw),2(yz-xw),-x^2-y^2+z^2+w^2\right).
\end{align*}
In Construction \ref{symmofhopf}, we require the definition of a connection on a principal bundle. For this reason, we recall the definition below:
\begin{defn}\cite{MR42768}\cite[Definition 6.1 on page 254]{MR1698234}\label{Ehresconn}
Let $G$ be a Lie group, and let $\pi:P \longrightarrow M$ be a principal $G$-bundle over a smooth manifold $M$. An \textbf{Ehresmann connection} $\zeta$ on $\pi$ assigns to every point $p$ in $P$ a vector subspace $H_p P$ of $T_p P$ such that: \\
$(1)$ The tangent space $T_p P$ decomposes as $$T_p P = H_p P \oplus V_p P$$
where $V_p P$ is the kernel of the map $$\pi_*:T_p P \longrightarrow T_{\pi(p)} M.$$
$(2)$ For every $g$ in $G$, if we write $$g:P \longrightarrow P,$$ for the action of $G$ on $P$, then $$g^*(H_p P) = H_{g \cdot p} P.$$
The subspaces $H_{p}P$ and $V_{p}P$ of $T_{p}P$ are referred to as the \textbf{horizontal} and \textbf{vertical} subspaces, respectively.
\end{defn}
Below, we provide an interpretation of an Ehresmann connection which we mainly use throughout this thesis.
\begin{rem}\cite{MR42768}\cite[Comments on page 254]{MR1698234}\label{equivconnpri}
An Ehresmann connection on a principal $G$-bundle $\pi:P \longrightarrow M$ can be considered as a $\mathfrak{g}$-valued one-form on $P$, where $\mathfrak{g}$ is the Lie algebra of $G$. For $p$ in $P$, define the map
\begin{align*}
\xi:\mathfrak{g} & \longrightarrow T_p P \\
A & \mapsto X^A_p
\end{align*}
defined by $$A \mapsto X^A_p(f)=\frac{d}{dt}f\bigl(p \cdot exp(tA)\bigr) \big\vert_{t = 0}$$
for $f$ in $C^\infty(P)$. By the property $(1)$ in Definition \ref{Ehresconn}, for each vector field $X$ in $\Gamma(TP)$ we get two vector fields $\mathsf{Hor}(X) \in $ and $\mathsf{Ver}(X)$ in $\Gamma(TP)$, where $\mathsf{Hor}(X)_p \in H_p P$ and $\mathsf{Ver}(X)_p \in V_p P$, for $p$ in $P$. Define the one-form $\tau$ on $P$ to be given by
$$\tau_p(X_p) = \xi^{-1}\bigl(\mathsf{Ver}(X_p)\bigr)$$
for $X_p$ in $T_p P$. So, we have the following properties: \\
$(1)$ $\tau_p(X^A_p) = A$ for $p$ in $P$ and $A$ in $\mathfrak{g}$, \\
$(2)$ $g^*(\tau) = \mathsf{Ad}_{g^{-1}} \tau$, \\
$(3)$ the tangent vector $X_p$ is in $H_p P$ if and only if $\tau_p(X_p) = 0$.
\end{rem}
\begin{defn}\cite{MR116360}\cite[Section 1.1 on page 206]{MR1950948}\label{groupoid1}
A \textbf{groupoid} $G$ is a category in which every morphism is invertible. We denote the set of objects of $G$ and the set of morphisms of $G$ by $G_0$ and $G_1$, respectively. \\ In \cite{MR1950948}, Moerdijk uses $G$ for denoting a groupoid and $G_1 \rightrightarrows G_0$ for denoting its source and target maps. Throughout this thesis, we use $[G_1 \rightrightarrows G_0]$ to denote a groupoid $G$ with objects $G_0$ and morphisms $G_1$.
\end{defn}
\begin{nota}\label{groupoid}
\cite[Section 1.1 on page 206]{MR1950948} Associated to a groupoid $[G_1 \rightrightarrows G_0]$, we have the following five structure maps: \\
(\rNum{1}) source map $\mathsf{s}:G_1 \longrightarrow G_0$ which sends a morphism to its source, \\
(\rNum{2}) target map $\mathsf{t}:G_1 \longrightarrow G_0$ which sends a morphism to its target, \\
(\rNum{3}) multiplication (composition) map $\mathsf{mult}:G_1 \times_{G_0} G_1 \longrightarrow G_1$ which is the composition of composable morphisms in $G$, where $G_1 \times_{G_0} G_1$ is given by the following pullback square:
\[
\begin{tikzcd}
G_1 \times_{G_0} G_1 \arrow[r,"pr_2"] \arrow[d,"pr_1"'] & G_1 \arrow[d,"t"] \\
G_1 \arrow[r,"s"'] & G_0
\end{tikzcd}
\]
(\rNum{4}) unit map $\mathsf{unit}:G_0 \longrightarrow G_1$ which sends an object $x$ to its identity morphism $\mathsf{1}_x$, \\
(\rNum{5}) inverse map $\mathsf{inv}:G_1 \longrightarrow G_1$ which sends a morphism $h$ to its inverse $h^{-1}$.
\end{nota}
\begin{defn}\cite{MR217087}\cite[Definition 2.2 on page 429]{MR2068521}
A \textbf{weak categorical group} $\mathcal{G}$ is a weak monoidal category $(\mathcal{M},\otimes,a,1,l,r)$ in which every morphism is invertible and each object $x$ is weakly invertible, that is, there exists an object $\bar{x}$ together with natural isomorphisms $x \otimes \bar{x} \overset{\cong}\longrightarrow 1$ and $\bar{x} \otimes x \overset{\cong}\longrightarrow 1$. Here, $1$ denotes the unit object, $a$ the associator, and $l$ and $r$ the left and right unitors, respectively. \\
If all natural isomorphisms are replaced with identities in the definition of weak monoidal categories and weak categorical groups, one obtains \textbf{strict monoidal categories} and \textbf{strict categorical groups} respectively. Using Definition \ref{groupoid1}, a strict categorical group may be defined as a strict monoidal groupoid. 
\end{defn}
\begin{defn}\cite{MR30760}\cite[on page 8]{MR2597732}
A \textbf{crossed module} is a $4$-tuple $(G,H,\alpha,\beta)$, where \\
$(1)$ $G$ and $H$ are groups, \\
$(2)$ $\beta:H \longrightarrow G$ is a group homomorphism, and \\
$(3)$ $\alpha:G \times H \longrightarrow H$ denotes an action of $G$ on $H$.
These data are required to satisfy the following conditions: \\
(\rNum{1}) (Equivariance) $\beta\bigl(\alpha(g,h)\bigr) = g \cdot \beta(h) \cdot g^{-1}$, \\
(\rNum{2}) (Peiffer identity) $\alpha\bigl(\beta(h),h^\prime\bigr) = h \cdot h^\prime \cdot h^{-1}$.
\end{defn}
\begin{rem}
The notion of a crossed module extends naturally to topological crossed modules, in which the groups involved are topological groups and the associated maps are continuous.
\end{rem}
\begin{rem}\label{2grpcross}
\cite{MR419643}
\cite[Remark 2.4 on page 574]{MR2805195} Given a strict categorical group $\mathcal{G}$, one can associate a crossed module $(G,H,\alpha,\beta)$ defined as follows: \\
$(1)$ $G \coloneq \mathsf{Obj}(\mathcal{G})$, \\
$(2)$ $H \coloneq$ the kernel of the source map $s:\mathsf{Mor}(\mathcal{G}) \longrightarrow \mathsf{Obj}(\mathcal{G})$, \\
$(3)$ $\alpha(g,h) \coloneq id_g \otimes h \otimes id_{g^{-1}}$, \\
$(4)$ $\beta$ is the restriction of the target map $\mathsf{t}:\mathsf{Mor}(\mathcal{G}) \longrightarrow \mathsf{Obj}(\mathcal{G})$ to $H$. \\
Conversely, from a crossed module $(G,H,\alpha,\beta)$ one can construct a strict categorical group $\mathcal{G}$ with: \\
(\rNum{1}) $\mathsf{Obj}(\mathcal{G}) \coloneq G$, \\
(\rNum{2}) $\mathsf{Mor}(\mathcal{G}) \coloneq H \rtimes G$, where the source and target of a morphism $(h,g)$ are given by
\begin{align*}
\mathsf{s}(h,g) & = g \\
\mathsf{t}(h,g) & = \beta(h) \cdot g.
\end{align*}
The composition of two composable arrows
\[
\begin{tikzcd}[row sep=huge,column sep=huge]
g \arrow[r,"{(h,g)}"] & \beta(h) \cdot g
\end{tikzcd}
\]
and
\[
\begin{tikzcd}[row sep=huge,column sep=huge]
\beta(h) \cdot g \arrow[r,"{\bigl(h^\prime,\beta(h) \cdot g\bigr)}"] & \beta(h^\prime \cdot h) \cdot g
\end{tikzcd}
\]
is given by:
\[
\begin{tikzcd}[row sep=huge,column sep=huge]
g \arrow[r,"{(h^\prime \cdot h,g)}"] & \beta(h^\prime \cdot h)  \cdot g.
\end{tikzcd}
\]
(\rNum{3}) Tensor multiplication of two objects $g$ and $g^\prime$ is defined by
$$g \otimes g^\prime \coloneq g \cdot g^\prime,$$
and the tensor multiplication of two morphisms $(h,g)$ and $(h^\prime,g^\prime)$ is given by
$$(h,g) \otimes (h^\prime,g^\prime) \coloneq \bigl(h \cdot \alpha(g,h^\prime),g \cdot g^\prime\bigr).$$
\end{rem}
\begin{exa}
In \cite[Section 3 on page 5]{MR3764535}, Ganter introduces the categorical circle $\mathcal{U}(1)$. The strict categorical group $\mathcal{U}(1)$ is associated to the crossed module $(\mathbb{R},\mathbb{Z} \times \mathbb{R}/\mathbb{Z},\alpha,\beta)$, where $\alpha$ is the action of $\mathbb{R}$ on $\mathbb{Z} \times \mathbb{R}/\mathbb{Z}$:
\begin{align*}
\alpha:\mathbb{R} \times (\mathbb{Z} \times \mathbb{R}/\mathbb{Z}) & \longrightarrow \mathbb{Z} \times \mathbb{R}/\mathbb{Z} \\
\bigl(r,(m,[a])\bigr) ~~ & \mapsto (m,[a+rm]),
\end{align*}
and $\beta$ is the group homomorphism
\begin{align*}
\mathbb{Z} \times \mathbb{R}/\mathbb{Z} & \longrightarrow \mathbb{R} \\
(m,[a]) & \mapsto m.
\end{align*}
So, by Remark \ref{2grpcross}, $\mathcal{U}(1)$ has the following data: \\
(\rNum{1}) objects: $\mathbb{R}$, \\
(\rNum{2}) morphisms: $(\mathbb{Z} \times \mathbb{R}/\mathbb{Z}) \rtimes \mathbb{R}$, \\
(\rNum{3}) The source and target of a morphism $\bigl((m,[a]),r\bigr)$ given by:
\begin{align*}
\mathsf{s}\bigl((m,[a]),r\bigr) & = r \\
\mathsf{t}\bigl((m,[a]),r\bigr) & = r + m.
\end{align*}
So, the morphism $\bigl((m,[a]),r\bigr)$ can be depicted as
\[
\begin{tikzcd}
r \arrow[rr,"{\bigl((m,[a]),r\bigr)}"] & & r + m.
\end{tikzcd}
\]
The composition of two composable morphisms
\[
\begin{tikzcd}
r \arrow[rr,"{\bigl((m,[a]),r\bigr)}"] & & r + m
\end{tikzcd}
\]
and 
\[
\begin{tikzcd}
r + m \arrow[rrr,"{\bigl((m^\prime,[a^\prime]),r+m\bigr)}"] & & & r + m + m^\prime
\end{tikzcd}
\]
in $\mathcal{U}(1)$ is given by
\[
\begin{tikzcd}
r \arrow[rrrr,"{\bigl((m+m^\prime,[a+a^\prime]),r\bigr)}"] & & & & r + m + m^\prime.
\end{tikzcd}
\]
(\rNum{5}) The tensor multiplication of two objects $r$ and $r^\prime$ is defined by $r + r^\prime$, and the tensor multiplication of two
morphisms $\bigl((m,[a]),r\bigr)$ and $\bigl((n,[b]),s\bigr)$ is given by
\begin{equation*}
\bigl(\begin{tikzcd}
r \arrow[rr,"{\bigl((m,[a]),r\bigr)}"] & & r + m
\end{tikzcd}\bigr) \otimes
\bigl(\begin{tikzcd}
s \arrow[rr,"{\bigl((n,[b]),s\bigr)}"] & & s + n
\end{tikzcd}\bigr) =
\bigl(\begin{tikzcd}
r + s \arrow[rrrr,"{\bigl((m+n,[a+b+rn]),r+s\bigr)}"] & & & & r + s + m + n
\end{tikzcd}\bigr).
\end{equation*}
\end{exa}
\begin{defn}\cite{MR116360}\cite[Definition 1.4 on page 569]{MR2805195}
A \textbf{categorical space} $\mathcal{M}$ is a category for which $\mathsf{Obj}(\mathcal{M})$ and $\mathsf{Mor}(\mathcal{M})$ are topological spaces, and all structure maps are continuous. We write $\mathcal{M}_0$ and $\mathcal{M}_1$ for $\mathsf{Obj}(\mathcal{M})$ and $\mathsf{Mor}(\mathcal{M})$, respectively.
\end{defn}
\begin{exa}
A topological space $M$ may be regarded as a categorical space in which the objects are elements of $M$, and each object $m$ admits only the identity morphism $m \longrightarrow m$. In this case, $M$ becomes a categorical space $\mathcal{M}$ with $\mathsf{Obj}(\mathcal{M}) = M$ and $\mathsf{Mor}(\mathcal{M}) = M$.
\end{exa}
\begin{defn}\cite{MR116360}\cite[Definition 1.4 on page 569]{MR2805195}
Let $\mathcal{M}$ and $\mathcal{M}^\prime$ be categorical spaces. A \textbf{continuous categorical map} $F:\mathcal{M} \longrightarrow \mathcal{M}^\prime$ is a functor such that $F_0:\mathcal{M}_0 \longrightarrow \mathcal{M}^\prime_0$ and $F_1:\mathcal{M}_1 \longrightarrow \mathcal{M}^\prime_1$ are continuous maps. \\
If there exists a functor $\bar{F}:\mathcal{M}^\prime \longrightarrow \mathcal{M}$ together with continuous natural transformations $F \circ \bar{F} \Longrightarrow id_{\mathcal{M}^\prime}$ and $\bar{F} \circ F \Longrightarrow id_{\mathcal{M}}$, we call $F$ an \textbf{equivalence} of categorical spaces and $\bar{F}$ a \textbf{weak inverse} of $F$.
\end{defn}
\begin{defn}\cite{MR217087}\cite[Definition 1.7 on page 570]{MR2805195}
Let $M$ be a topological space regarded as a categorical space, and let $\mathcal{G}$ be a strict categorical group. The space $M$ is called a \textbf{categorical right $\mathcal{G}$-space} if there exists a continuous categorical map $$\epsilon:M \times \mathcal{G} \longrightarrow M$$ such that the following diagram commutates:
\[
\begin{tikzcd}
& \mathcal{M} \times \mathcal{G} \times \mathcal{G} \arrow[dl,"\epsilon \times id_{\mathcal{G}}"'] \arrow[dr,"id_{\mathcal{M}} \times m_{\mathcal{G}}"] & \\
\mathcal{M} \times \mathcal{G} \arrow[dr,"\epsilon"'] & & \mathcal{M} \times \mathcal{G} \arrow[dl,"\epsilon"] \\
& \mathcal{M} &
\end{tikzcd}
\]
where $m_{\mathcal{G}}$ is the tensor multiplication of $\mathcal{G}$.
\end{defn}
\begin{defn}\label{prin2bun}\cite{MR2709030}\cite[Definition 6.1.5 on page 389]{MR3089401}
Let $\mathcal{G}$ be a strict categorical group, and let $M$ be a smooth manifold. A \textbf{principal $\mathcal{G}$-bundle} over $M$ is a categorical right $\mathcal{G}$-space $\mathcal{P}$ together with a  $\mathcal{G}$-equivariant map
$$\rho:\mathcal{P} \longrightarrow M$$
such that the smooth functor
\[
(pr_1,R):\mathcal{P} \times \mathcal{G} \longrightarrow \mathcal{P} \times_M \mathcal{P}
\]
is a weak equivalence. Here $R$ denotes the right action of $\mathcal{G}$ on $\mathcal{P}$.
\end{defn}
\begin{rem}
As noted in \cite[Remark 6.2.7 on page 391]{MR3089401}, Definition \cite[Definition 1.8 on page 571]{MR2805195} is, in general, restrictive, since it requires local trivialisations to be given by strong equivalences. However, the use of this stronger notion does not affect the essential features of the constructions considered here. In particular, the resulting categorical total space and the action of the categorical group remain unchanged. The distinction lies only in the nature of local trivialisations and, more broadly, in the class of morphisms between categorical spaces: Waldorf’s framework admits anafunctors in place of strict functors. \\
For the class of examples considered in Construction \ref{cathopf}, strong local trivialisations are available, and the framework of \cite{MR2805195} provides a convenient and sufficiently precise setting. Where greater generality is required, we pass to Waldorf’s formulation without altering the underlying geometric or categorical content, but only the level of flexibility in the morphisms and trivialisations.
\end{rem}
\begin{defn}\cite{MR2342821}\cite[Definition 2.12 on page 578]{MR2805195}\label{gvaluedcocycle} Let $X$ be a topological space, and let $\mathcal{G}$ be a strict categorical group. Let $(G,H,\alpha,\beta)$ be the associated crossed module to $\mathcal{G}$. A \textbf{$\mathcal{G}$-valued cocycle} on $X$ is a triple $(W_i,g_{ij},h_{ijk})$ where $\left\{W_i\right\}_{i \in I}$ is an open cover of $X$ and for each $i,j,k \in I$, the maps $$g_{ij}:W_i \cap W_j \longrightarrow G$$ and $$h_{ijk}:W_i \cap W_j \cap W_k \longrightarrow H$$ are smooth. These maps are required to satisfy the following conditions pointwise:
\begin{align}
g_{ii} & = e_G \label{gii} \\
\beta\bigl(h_{ijk}\bigr) \cdot g_{ij} \cdot g_{jk} & = g_{ik} \label{cocycleg} \\
h_{ijj} = h_{jji} & = e_H \label{hijj} \\
h_{ijk} \cdot h_{ikl} & = h_{ijl} \cdot \alpha(g_{ij},h_{jkl}) \label{cocycleh}
\end{align}
\end{defn}
The $2$-category of $\mathcal{G}$-valued cocycles and the $2$-cateogry of $\mathcal{G}$-bundles are equivalent \cite[Sections 2.5.2 and 2.5.3]{MR2709030}. In what follows, we do not need the $2$-categorical structure; instead, we simply recall how a principal $\mathcal{G}$-bundle may be obtained from a $\mathcal{G}$-valued cocycle. For the converse direction, see \cite[Remark 2.14 on page 580]{MR2805195}.
\begin{rem}\cite{MR2342821}\cite[Remark 2.16 on page 583]{MR2805195}\label{objandmorcatbun}
Let $X$ be a topological space , and let $\mathcal{G}$ be a strict categorical group. Let $(G,H,\alpha,\beta)$ be the associated crossed module to $\mathcal{G}$. Given a $\mathcal{G}$-valued cocycle $(W_i,g_{ij},h_{ijk})$ on $X$, one constructs a principal $\mathcal{G}$-bundle $\rho:\mathcal{P} \longrightarrow X$ as follows: \\
(\rNum{1}) The space of objects of $\mathcal{P}$ is
\begin{equation}\label{objs}
\mathsf{Obj}(\mathcal{P}) = \sqcup_i (W_i \times G).
\end{equation}
The space of morphisms of $\mathcal{P}$ is
\begin{equation}\label{mors}
\mathsf{Mor}(\mathcal{P}) = \sqcup_{i,j} \bigl((W_i \cap W_j) \times H \times G\bigr).
\end{equation}
The source and target of a morphism $(w,h,g)$ in $(W_i \cap W_j) \times H \times G$ are given by:
\begin{align}
\mathsf{s}\bigl(w,h,g\bigr) & \coloneq (w,g) \\
\mathsf{t}\bigl(w,h,g\bigr) & \coloneq \bigl(w,g_{ij}(w)^{-1} \cdot \beta(h) \cdot g\bigr)
\end{align}
The composition of two composable arrows
\[
\begin{tikzcd}
\bigl(w,g\bigr) \arrow[rr,"{\bigl(w,h,g\bigr)}"] & & \bigl(w,g^\prime\bigr)
\end{tikzcd}
\]
in $(W_i \cap W_j) \times H \times G$ and
\[
\begin{tikzcd}
\bigl(w,g^\prime\bigr) \arrow[rr,"{(w,h^\prime,g^\prime)}"] & & \bigl(w,g^{\prime\prime}\bigr)
\end{tikzcd}
\]
in $(W_j \cap W_k) \times H \times G$, where
\begin{align*}
g^\prime & = g_{ij}(w)^{-1} \cdot \beta(h) \cdot g, \\
g^{\prime\prime} & = g_{jk}(w)^{-1} \cdot \beta(h^\prime) \cdot g^\prime,
\end{align*}
is defined as
\[
\begin{tikzcd}[row sep=huge,column sep=huge]
\bigl(w,g\bigr) \arrow[rrr,"{\bigl(w,g,h_{ijk} \cdot \alpha(g_{ij},h^\prime) \cdot h\bigr)}"] & & & \bigl(w,g^{\prime\prime}\bigr)
\end{tikzcd}
\]
Here, we need to note that
\[
g^{\prime\prime} = g_{ik}^{-1}(w) \cdot \beta\bigl(h_{ijk} \cdot \alpha(g_{ij},h^\prime) \cdot h\bigr) \cdot g
\]
by Equation (\ref{cocycleg}) in Definition \ref{gvaluedcocycle}.  \\
(\rNum{2}) The action of $\mathcal{G}$ on $\mathcal{P}$ is defined by:
\[
(w,g) \cdot g^\prime \coloneq (w,g \cdot g^\prime)
\]
on objects, and by
\[
(w,h,g) \cdot (h^\prime,g^\prime) \coloneq \bigr(w,h \cdot \alpha(g,h^\prime),g \cdot g^\prime\bigr)
\]
on morphisms. \\
(\rNum{3}) The $\mathcal{G}$-equivalence $\bar{\phi}_i:W_i \times \mathcal{G} \longrightarrow \rho^{-1}(W_i)$ can be defined using inclusion and $\phi_i:\rho^{-1}(W_i) \longrightarrow W_i \times \mathcal{G}$ is
given by:
\[
\phi_i(w,g) = (w,g_{ij} \cdot g)
\]
on objects and
\[
\phi_i(w,h,g) = \Bigl(w,\bigl(h_{ijk} \cdot \alpha(g_{ij},h),g_{ij} \cdot g\bigr)\Bigr)
\]
on morphisms.
\end{rem}
\begin{nota}\label{objmornewform}
At times, we express the disjoint unions in Equations (\ref{objs}) and (\ref{mors}) in the following form:
\begin{align*}
\mathsf{Obj}(\mathcal{P}) & = \cup_i \bigl(\left\{i\right\} \times W_i \times G\bigr), \\
\mathsf{Mor}(\mathcal{P}) & = \cup_{i,j} \bigl(\left\{(i,j)\right\} \times (W_i \cap W_j) \times H \times G\bigr).
\end{align*}
Thus, an object of $\mathcal{P}$ may be represented as $(i,w,g)$, and a morphism of $\mathcal{P}$ may be expressed as $(i,j,w,h,g)$.
\end{nota}
In the following, we recall the definition of a connection on a categorical bundle. We employ this notion in formulating the conjectural interpretation of the categorical group $String(3)$ as categorical group of symmetries of the categorical Hopf map in Section $6$:
\begin{defn}\cite[Definition 4.1.1 on page 15]{MR3894086}
Let $\mathcal{P}$ be a categorical space, and let $\Gamma$ be a strict categorical Lie group. Denote by $(G,H,\alpha,\beta)$ the crossed module associated to $\Gamma$. Considering the categorical Lie algebra $(\mathfrak{g},\mathfrak{h},\alpha_\ast,\beta_\ast)$ of $\Gamma$ as defined in \cite[Section 2.1 on page 3]{MR3894086}, a \textbf{$\gamma$-valued $k$-form} $\Psi$ on $\mathcal{P}$ consists of: \\
$\bullet$ a $\mathfrak{g}$-valued $k$-form $\Psi^a \in \Omega^k(\mathcal{P}_0,\mathfrak{g})$, \\
$\bullet$ a $\mathfrak{h}$-valued $k$-from $\Psi^b \in \Omega^k(\mathcal{P}_1,\mathfrak{h})$, \\
$\bullet$ a $\mathfrak{h}$-valued $(k+1)$-form $\Psi^c \in \Omega^{k+1}(\mathcal{P}_0,\mathfrak{h})$. \\
The differential $D:\Omega^k(\mathcal{P},\gamma) \longrightarrow \Omega^{k+1}(\mathcal{P},\gamma)$ maps $\Psi \in \Omega^k(\mathcal{P},\gamma)$ to $D(\Psi)$ with components: \\
$\bullet$ $(D\Psi)^a = d\Psi^a - (-1)^k \mathsf{t}_\ast(\Psi^c)$, \\
$\bullet$ $(D\Psi)^b = d\Psi^b - (-1)^k \Delta(\Psi^c)$, \\
$\bullet$ $(D\Psi)^c = d\Psi^c$. \\
The map $\Delta:\Omega^k(\mathcal{P}_0) \longrightarrow \Omega^k(\mathcal{P}_1)$ is defined as in \cite[Section 4.1 on page 14]{MR3894086}.
\end{defn}
\begin{defn}\cite[Definition 5.1.1 on page 18]{MR3894086}\label{connectiononcatbundle}
Let $\Gamma$ be a categorical Lie group with its categorical Lie algebra $\gamma$, and let $\rho:\mathcal{P} \longrightarrow M$ be a principal $\Gamma$-bundle. Consider the action $$R:\mathcal{P} \times \Gamma \longrightarrow \mathcal{P}$$ of $\Gamma$ on $\mathcal{P}$, and let $(G,H,\alpha,\beta)$ be the crossed module associated to $\Gamma$. \\
A connection $\Omega$ on $\rho$ s defined to be a $\gamma$-valued $1$-form on $\mathcal{P}$ satisfying:
\begin{equation}\label{connontwobun}
R^\ast \Omega = \mathsf{Ad}^{-1}_{pr_\Gamma}(pr^\ast_\mathcal{P} \Omega) + pr^\ast_\Gamma \theta,
\end{equation}
where $pr_\Gamma$ and $pr_\mathcal{P}$ are the projection maps
\[
pr_\Gamma:\mathcal{P} \times \Gamma \longrightarrow \Gamma, ~~~~~
pr_\mathcal{P}:\mathcal{P} \times \Gamma \longrightarrow \mathcal{P}.
\]
Here, the functor $\mathsf{Ad}_{pr_\Gamma}$ is defined as in \cite[Section 4.2 on page 16]{MR3894086}, and $\theta$ denotes the Maurer-Cartan form on $\Gamma$. The curvature $\mathsf{curv}(\Omega)$ of $\Omega$ is a $2$-form in $\Omega^2(\mathcal{P},\gamma)$ given as in \cite[Definition 5.1.3 on page 19]{MR3894086}. \\
Since we will not write down the explicit connection form for the categorical Hopf map, we instead refer the reader to \cite[Section 5.1 on page 19]{MR3894086} to revisit the expansion of Equation (\ref{connontwobun}) in terms of components $\Omega^a$, $\Omega^b$ and $\Omega^c$.
\end{defn}
\begin{rem}
There are two extreme cases for a connection on a categorical bundle: \\
$(1)$ $\Gamma = G$ where $G$ is an ordinary group, \\
$(2)$ $\Gamma = \mathbb{B}U(1)$. \\
In case ($1$), one recovers the usual notion of a connection on a principal bundle, while in case ($2$), the resulting structure is a connection on a bundle gerbe. For further details, see \cite[Examples 5.1.9 and 5.1.10 on page 20]{MR3894086}.
\end{rem}

\section{Equivalent constructions for the basic bundle gerbe over \texorpdfstring{$S^3$}{S3}}
In this section, we recall the definition of a bundle gerbe and its relation to the categorical group $\mathbb{B}U(1)$. The strict categorical group $\mathbb{B}U(1)$ is defined as follows: \\
$\bullet$ its set of objects is the trivial group $\left\{\mathbbm{1}\right\}$, and \\ 
$\bullet$ its set of morphisms is $U(1)$. \\
The associated crossed module for $\mathbb{B}U(1)$ is given by 
$$(\left\{\mathbbm{1}\right\},\mathbb{R}/\mathbb{Z},\alpha,\beta),$$
where both $\alpha$ and $\beta$ are trivial choices. \\
There is a well-known correspondence between bundle gerbes and principal $\mathbb{B}U(1)$-bundles which we recall in Remark \ref{butobunandconv}. More precisely, there exists an equivalence between the $2$-category of bundle gerbes and the $2$-category of principal $\mathbb{B}U(1)$-bundles. As we do not use the $2$-categorical structure explicitly in this thesis, we refer the interested reader to \cite[Section 7 on page 392]{MR3089401}. \\
Bundles gerbes over a manifold $M$ are classified, up to stable isomorphism, by degree-three integral cohomology $H^3(M;\mathbb{Z})$. we review this classification in Remark \ref{clager}. The class in $H^3(M;\mathbb{Z})$ associated to a bundle gerbe is known as its \textbf{Dixmier-Douady class}.  \\
An important example for our purposes is the basic bundle gerbe over $S^3$, described in Example \ref{murbun}. This gerbe is classified by the positive generator of $$H^3(S^3;\mathbb{Z}) \cong \mathbb{Z}.$$
In Example \ref{ourbun} and Observation \ref{mapH}, we present two equivalent constructions of this gerbe, and we establish their equivalence in Observations \ref{equiofbungerbes} and \ref{mapH}.
\begin{rem}\cite[page 68]{MR1197353}\label{linebuncirbun}
There is a natural correspondence between complex line bundles and principal $U(1)$-bundles. Accordingly, we use these notions interchangeably without any confusion.
\end{rem}
\begin{nota}
Let $M$ and $Y$ be topological spaces, and let $\pi:Y \longrightarrow M$ be a continuous map. When no ambiguity arises, we write $Y^{[k]}$ for the $k$-fold fibre product 
$$\underbrace{Y \times_M Y \times_M \cdots \times_M Y}_{k ~ \text{times}}.$$
We also denote by $\pi_{ij}:Y^{[3]} \longrightarrow Y^{[2]}$ the projection onto the $i$-th and $j$-th factors.
\end{nota}
\begin{defn}\cite[Definition 12.1 on page 243]{MR2681698}\label{gerbe}
Let $M$ be a smooth manifold. A \textbf{bundle gerbe over $M$} is a triple $(L,Y,M)$ consisting of: \\
$\bullet$ a surjective submersion $\pi:Y \longrightarrow M$, \\
$\bullet$ a line bundle $\lambda:L \longrightarrow Y^{[2]}$, and \\
$\bullet$ an isomorphism
$$m:\pi_{12}^*L \otimes \pi_{23}^*L \longrightarrow \pi_{13}^*L$$
called the \textbf{bundle gerbe product} such that the diagram
\begin{equation}\label{gerbeprod}
\begin{tikzcd}[row sep=huge,column sep=huge]
L_{(y_1,y_2)} \otimes L_{(y_2,y_3)} \otimes L_{(y_3,y_4)} \arrow[r,"m \otimes id"] \arrow[d,"id \otimes m"'] & L_{(y_1,y_3)} \otimes L_{(y_3,y_4)} \arrow[d,"m"] \\
L_{(y_1,y_2)} \otimes L_{(y_2,y_4)} \arrow[r,"m"'] & L_{(y_1,y_4)}
\end{tikzcd}
\end{equation}
commutes for each $(y_1,y_2,y_3,y_4) \in Y^{[4]}$. Here, $L_{(y_i,y_j)}$ denotes the fibre of the line bundle $L$ over $(y_j,y_j) \in Y^{[2]}$.
\end{defn}
\begin{exa}[Construction one for the basic bundle gerbe over $S^3$]\cite[Example 12.8 on page 244]{MR2681698}\label{murbun}
Let $$W = S^3 \setminus \left\{(0,0,0,-1)\right\}$$ and $$W^\prime = S^3 \setminus \left\{(0,0,0,1)\right\}$$ be the neighbourhoods of points $(0,0,0,1)$ and $(0,0,0,-1)$, respectively. The intersection $W \cap W^\prime$ is homotopy equivalent to $S^2$, with homotopy equivalence chosen to be given by the canonical maps
\begin{align*}
f &: W \cap W' \longrightarrow S^2, \qquad
(x,y,z,w) \longmapsto
\left(
\frac{x}{\sqrt{x^2+y^2+z^2}},
\frac{y}{\sqrt{x^2+y^2+z^2}},
\frac{z}{\sqrt{x^2+y^2+z^2}}
\right), \\
g &: S^2 \longrightarrow W \cap W', \qquad
(X,Y,Z) \longmapsto (X,Y,Z,0).
\end{align*}
It follows that complex line bundles over $S^2$ are in bijective correspondence with complex line bundles over $W \cap W^\prime$, since line bundles are classified up to isomorphism by homotopy classes of maps into the classifying space. Equivalently, every complex line bundle over $W \cap W^\prime$ is isomorphic to the pullback along the map $f$ of a complex line bundle over $S^2$. \\
The bundle gerbe $(L,Y,S^3)$ is specified by the following data: \\
(\rNum{1}) $Y = W \sqcup W^\prime$, \\
(\rNum{2}) A line bundle $L \longrightarrow Y^{[2]} = W \sqcup (W \cap W^\prime) \sqcup (W^\prime \cap W) \sqcup W^\prime$. \\
Over $W$ and $W^\prime$, the line bundles $L_{W}$ and $L_{W^\prime}$ are trivial since these spaces are contractible. Over $W \cap W^\prime$, we take the line bundle $L_{W \cap W^\prime}$ which is the pullback of the line bundle associated with the Hopf map $\eta:S^3 \longrightarrow S^2$ along the canonical map $f$. By our convention set in the background section, the Hopf map represents the class $1$ in $H^2(S^2;\mathbb{Z})$; hence the line bundle $L_{W \cap W^\prime}$ is non-trivial. \\
We note the following isomorphisms:
\begin{align*}
\check{H}^1\bigl(S^2,\underline{U(1)}\bigr) & \cong \check{H}^2\bigl(S^2,\underline{\mathbb{Z}}\bigr) \\
& \cong H^2(S^2;\mathbb{Z}) \\
& \cong H^2(W \cap W^\prime;\mathbb{Z}),
\end{align*}
arising from the exact sequence of constant sheaves
\[
\begin{tikzcd}
1 \arrow[r] & \underline{\mathbb{Z}} \arrow[r] & \underline{\mathbb{R}} \arrow[r] & \underline{U(1)} \arrow[r] & 1
\end{tikzcd},
\]
where the second isomorphism is given by \cite[Theorem 3.1.1 on page 184]{MR1481706}. \\
(\rNum{3}) The bundle gerbe product
\[
m:\pi_{12}^*L \otimes \pi_{23}^*L \longrightarrow \pi_{13}^*L
\]
is defined using the equality $Y^{[3]} = Y^{[2]} \times_Y Y^{[2]}$. In $Y^{[3]}$, we have the following intersections:
\[
W \cap W \cap W, \qquad W \cap W \cap W^\prime, \qquad W \cap W^\prime \cap W, \qquad W \cap W^\prime \cap W^\prime,\] \[W^\prime \cap W \cap W, \qquad W^\prime \cap W \cap W^\prime, \qquad W^\prime \cap W^\prime \cap W, \qquad W^\prime \cap W^\prime \cap W^\prime.
\]
To illustrate, we consider the intersections $W \cap W \cap W$ and $W \cap W^\prime \cap W$. The construction is similar for
other intersections. For $W \cap W \cap W$ in $Y^{[3]}$, we have three maps
\begin{align*}
pr_{12} & : W \cap W \cap W \longrightarrow W \cap W\\
pr_{23} & : W \cap W \cap W \longrightarrow W \cap W \\
pr_{13} & : W \cap W \cap W \longrightarrow W \cap W
\end{align*}
with values in $Y^{[2]}$. We notice that we are viewing $W \cap W \cap W$ as
\[
(W \cap W) \cap (W \cap W)
\]
In this case, the bundle gerbe product reduces to multiplication in $\mathbb{C}$, as the fibres consist of complex numbers; in other words, this yields three trivial line bundles
\begin{align*}
pr_{12}^*L_{W} & \longrightarrow W \cap W \cap W, \\
pr_{23}^*L_{W} & \longrightarrow W \cap W \cap W, \\
pr_{13}^*L_{W} & \longrightarrow W \cap W \cap W.
\end{align*}
For $W \cap W^\prime \cap W$ in $Y^{[3]}$, we have three maps
\begin{align*}
pr_{12} & : W \cap W^\prime \cap W \longrightarrow W \cap W^\prime \\
pr_{23} & : W \cap W^\prime \cap W \longrightarrow W^\prime \cap W \\
pr_{13} & : W \cap W^\prime \cap W \longrightarrow W \cap W
\end{align*}
with values in $Y^{[2]}$. Considering $L^{-1}_{W \cap W^\prime}$, the line bundle over $W \cap W^\prime$ having transition function inverse to the transition function of $L_{W \cap W^\prime}$, we get three line bundles
\begin{align*}
pr_{12}^*L_{W \cap W^\prime} & \longrightarrow W \cap W^\prime \cap W, \\
pr_{23}^*L^{-1}_{W \cap W^\prime} & \longrightarrow W \cap W^\prime \cap W, \\
pr_{13}^*L_{W} & \longrightarrow W \cap W^\prime \cap W.
\end{align*}
We note that the line bundle $L_W$ is trivial. Hence, its pullback along $pr_{13}$ is also a trivial line bundle over $W \cap W^\prime \cap W$. Tensoring the first two line bundles, we obtain a canonically trivial line bundle, since they are mutually inverse. In particular, this defines the multiplication of the bundle gerbe in this case.
\end{exa}
\begin{exa}[Construction two for the basic bundle gerbe over $S^3$]\label{ourbun}
Let $$V_N = S^2 \setminus \left\{(0,0,-1)\right\}$$ and $$V_S = S^2 \setminus \left\{(0,0,1)\right\}$$ be the neighbourhoods of the north pole and south poles of $S^2$, respectively. Define
$$W_N = \eta^{-1}(V_N), ~~~~~ W_S = \eta^{-1}(V_S),$$
the preimages of $U$ and $V$ under the Hopf map. Then $\left\{W_N,W_S\right\}$ forms an open cover of $S^3$. The intersection $V_N \cap V_S$ is homotopy equivalent to the equator $S^1$. Moreover,
\[
W_N \simeq V_N \times S^1, ~~~~~ W_S \simeq V_S \times S^1,
\]
so that $W_N \cap W_S$ is homotopy equivalent to the torus $T^2 = S^1 \times S^1$, with homotopy equivalence chosen to be given by canonical maps
\begin{align*}
f &: W_N \cap W_S \longrightarrow S^1 \times S^1, \qquad
\left((X,Y,Z),(X^\prime,Y^\prime)\right) \longmapsto
\left(
(\frac{X}{\sqrt{X^2+Y^2}},
\frac{Y}{\sqrt{X^2+Y^2}}),
(X^\prime,Y^\prime)
\right), \\
g &: S^1 \times S^1 \longrightarrow W_N \cap W_S, \qquad
\left((X,Y),(X^\prime,Y^\prime)\right) \longmapsto \left((X,Y,0),(X^\prime,Y^\prime)\right).
\end{align*}
The bundle gerbe $(L,Y,S^3)$ is describes as follows: \\
(\rNum{1}) $Y = W_N \sqcup W_S$, \\
(\rNum{2}) A line bundle $L \longrightarrow Y^{[2]} = W_N \sqcup (W_N \cap W_S) \sqcup (W_S \cap W_N) \sqcup W_S$; The line bundles over $W_N$ and $W_S$ are trivial. \\
Over $W_N \cap W_S \simeq T^2$, we have the pullback of the line bundle defined by the quotient construction
\[
\begin{tikzcd}
\mathbb{R}^2 \times \mathbb{C} \arrow[r] \arrow[d] & (\mathbb{R}^2 \times \mathbb{C})/\sim \arrow[d] \\
\mathbb{R}^2 \arrow[r] & \mathbb{R}^2/\mathbb{Z}^2
\end{tikzcd}
\]
along the map $f$, where the equivalence relation is given by
$$\bigl((x,y),\epsilon\bigr) \sim \bigl((x+m,y+n),e^{2\pi \mathsf{i} my} \cdot \epsilon\bigr)$$ for $x,y \in \mathbb{R}$ and $m,n \in \mathbb{Z}$. \\
(\rNum{3}) The bundle gerbe product
\[
m:\pi_{12}^*L \otimes \pi_{23}^*L \longrightarrow \pi_{13}^*L
\]
can be described case by case, as in Example \ref{murbun}.
\end{exa}
\begin{rem}\label{butobunandconv}\cite[Section 7 on page 38]{MR3089401}
Let $M$ be a smooth manifold. \\
$(1)$ Given a principal $\mathbb{B}U(1)$-bundle $\rho:\mathcal{P} \longrightarrow M$, one can construct a bundle gebre $(L,Y,M)$. First, we require a smooth manifold $Y$ together with a surjective submersion $\pi:Y \longrightarrow M$. We take $Y = \mathsf{Obj}(\mathcal{P})$ and define $\pi = \rho_0$. Next, we require a line bundle over $Y^{[2]}$. Let $P = \mathsf{Mor}(\mathcal{P})$, and let $\left\{U_i\right\}_{i \in I}$ be an open cover of $M$ satisfying the properties of Definition \ref{prin2bun}. From Notation \ref{objmornewform}, the objects of $\mathcal{P}$ are of the form $(i,u,e)$, where $u \in U_i$, while the morphisms of $\mathcal{P}$ are of the form $(i,j,u,e,[a])$, where $u \in U_i \cap U_j$ and $[a] \in \mathbb{R}/\mathbb{Z}$. Hence,
\begin{align*}
\pi\bigl(\mathsf{s}(i,j,u,e,[a])\bigr) & = \pi(i,u,e) \\
& = u \\
\pi\bigl(\mathsf{t}(i,j,u,e,[a])\bigr) & = \pi(j,u,e) \\
& = u
\end{align*}
By the universal property of pullback, there exists a unique map $\lambda:P \longrightarrow Y^{[2]}$ making the following diagram commute:
\[
\begin{tikzcd}
P \arrow[drrr,bend left=45,"\mathsf{t}"] \arrow[dr,,dashed,"\lambda = {(\mathsf{s},\mathsf{t})}"] \arrow[dddr,bend right=45,"\mathsf{s}"'] & & & \\
& Y^{[2]} \arrow[rr,"pr_2"] \arrow[dd,"pr_1"'] & & Y \arrow[dd,"\pi"] \\
& & & \\
& Y \arrow[rr,"\pi"'] & & M 
\end{tikzcd}
\]
We now show that $\lambda:P \longrightarrow Y^{[2]}$ defines a principal $U(1)$-bundle. Let $(y_1,y_2) \in Y^{[2]}$, and set $x = \pi(y_1) = \pi(y_2)$. Since $\rho$ is a $\mathbb{B}U(1)$-bundle, there exist a neighbourhood $U_x$ of $x$ and an equivalence
$$\phi_x:\rho^{-1}(U_x) \longrightarrow U_x \times \mathbb{B}U(1)$$
with weak inverse $\bar{\phi}_x$, commutating the diagram in Definition \ref{prin2bun}. Let $U_y = \rho^{-1}(U_x)$, and consider the neighbourhood $U_y \times_{M} U_y$ of $(y_1,y_2)$. Note that we already have an action of $U(1)$ on $P$, induced by the action of $\mathbb{B}U(1)$ on $\mathcal{P}$. Define a homeomorphism 
$$\psi_y:\lambda^{-1}(U_y \times_M U_y) \longrightarrow (U_y \times_M U_y) \times U(1),$$
by
$$\psi_y(i,j,u,e,[a]) = \bigl((i,u,e),(j,u,e),[a]\bigr).$$
Thus, $P$ defines a principal $U(1)$-bundle $P$ over $Y^{[2]}$. By Remark \ref{linebuncirbun}, this yields a line bundle $L$ over $Y^{[2]}$. \\
Finally, we must define an isomorphism $m:\pi_{12}^*L \otimes \pi_{23}^*L \longrightarrow \pi_{13}^*L$, which satisfies the associativity condition in diagram (\ref{gerbeprod}) of Definition \ref{gerbe}. Since
\[
Y^{[3]} = Y^{[2]} \times_Y Y^{[2]},
\]
we define $m$ over the triple intersection $U_{ijk}$ using the transition data of $L \longrightarrow Y^{[2]}$. Writing
\[
U_{ijk} = U_{ij} \cap U_{jk},
\]
Comparison of fibres over $U_{ij}$ and $U_{jk}$ determines the isomorphism to the fibre over $U_{ik}$. \\
$(2)$ Conversely, given a bundle gerbe $(L,Y,M)$, one can construct a $\mathbb{B}U(1)$-bundle $\rho:\mathcal{P} \longrightarrow M$. Set $\mathsf{Obj}(\mathcal{P}) = Y$ and $\mathsf{Mor}(\mathcal{P}) = L$, with the diagram
\[
\begin{tikzcd}
L \arrow[r,"\lambda"] & Y^{[2]} \arrow[r,shift left,"pr_1"] \arrow[r,shift right,"pr_2"'] & Y
\end{tikzcd}.
\]
For $l \in L$, define:
\begin{align*}
\mathsf{s}(l) & = pr_1\bigl(\lambda(l)\bigr) \\
\mathsf{t}(l) & = pr_2\bigl(\lambda(l)\bigr).
\end{align*}
If $l$ and $l^\prime$ are two composable morphisms in $L$, their composition is given by $m(l,l^\prime)$, where $m$ denotes the bundle gerbe product. \\
To define the identity map $\mathsf{id}:\mathsf{Obj}(\mathcal{P}) \longrightarrow \mathsf{Mor}(\mathcal{P})$, first consider :
\begin{align*}
d & : Y \longrightarrow Y^{[2]}, ~~ y \mapsto (y,y), \\
\Delta & : Y \longrightarrow Y^{[3]}, ~~ y \mapsto (y,y,y).
\end{align*}
Pulling back $m$ along $\Delta$ yields
\[
\Delta^*m:(\pi_{12} \circ \Delta)^*L \otimes (\pi_{23} \circ \Delta)^*L \longrightarrow (\pi_{13} \circ \Delta)^*L,
\]
which provides an isomorphism
\begin{equation}
\Delta^*m:d^*L \otimes d^*L \longrightarrow d^*L.
\end{equation}
The map 
$$\mathsf{id}:\mathsf{Obj}(\mathcal{P}) \longrightarrow \mathsf{Mor}(\mathcal{P})$$
is then defined by the composition
\[
\begin{tikzcd}
Y \arrow[r,"s_Y"] & Y \times U(1) \arrow[r,"\cong"] & d^*L \otimes (d^*L)^\vee \arrow[dd,"{(\Delta^*m)^{-1} \otimes id}"] & & \\
& & & & & \\
& & d^*L \otimes d^*L \otimes (d^*L)^\vee \arrow[r,"\cong"] & d^*L \otimes (Y \times U(1)) \cong d^*L \arrow[r,"pr_2"] & L
\end{tikzcd},
\]
where $s_Y$ is the canonical section of the trivial $U(1)$-bundle $Y \times U(1) \longrightarrow Y$, and $(d^*L)^\vee$ denotes the dual bundle of $d^*L$. \\
The action of $\mathbb{B}U(1)$ on $\mathcal{P}$ is given by the trivial action $\left\{\mathbbm{1}\right\}$ on $Y = \mathsf{Obj}(\mathcal{P})$, together with the natural action of $U(1)$ on $L = \mathsf{Mor}(\mathcal{P})$ coming from the associated line bundle. Finally, the projection $\rho:\mathcal{P} \longrightarrow M$ is defined by the surjective submersion $\pi:Y \longrightarrow M$.
\end{rem}
In the following, we recall the definition of a connection and curving on a bundle gerbe. We employ these structures in the study symmetries of our bundle gerbe, where we explain what we mean by "symmetries" at the beginning of section $6$.
\begin{defn}\cite{MR1669206,MR2681698}\cite[Definition 1 on page 34]{MR2318847}\label{gerbeconn}
Let $(L,Y,M)$ be a bundle gerbe. A \textbf{connection} and \textbf{curving} on this bundle gerbe consist of: \\
$(1)$ A connection $\nabla$ on the line bundle $L \longrightarrow Y^{[2]}$. \\
$(2)$ An associative isomorphism
$$m_{\nabla}:\pi^*_{12}(L,\nabla) \otimes \pi^*_{23}(L,\nabla) \longrightarrow \pi^*_{13}(L,\nabla),$$
which means that the bundle gerbe product $m$ is compatible with the connection $\nabla$ on $L$. \\
$(3)$ A $2$-form $C \in \Omega^2(Y)$, called the \textbf{curving}, satisfying
$$\pi^*_2 C - \pi^*_1 C = R_\nabla,$$
where $R_\nabla$ denotes the curvature of $\nabla$. \\
The connection on $(L,Y,M)$ is denoted by $\nabla$, and the associated $2$-form $C$ is called the \textbf{curving}.
Considering these data, there exists a unique $3$-from $D$ on $M$ such that $$\pi^*D = dC.$$
\end{defn}
In what follows, we recall the definition of a cocycle for a bundle gerbe. Such a cocycle determines a class in Deligne cohomology, as noted in Remark \ref{clager}.
\begin{defn}\cite[Section 1.1]{Wal2007phd}
Let $M$ be a manifold, and let $\left\{U_i\right\}_{i \in I}$ be an open cover of $M$, a \textbf{cocycle} for a bundle gerbe $(L,Y,M)$ with respect to this cover is a triple 
$$(g_{ijk},A_{ij},B_i),$$ where: \\
$(1)$ $g_{ijk}$ is a smooth $U(1)$-valued function on the triple intersection $U_i \cap U_j \cap U_k$, \\
$(2)$ $A_{ij}$ is a $1$-form on $U_i \cap U_j$, \\
$(3)$ $B_i$ is a $2$-from on $U_i$. \\
These data are required to satisfy the following conditions:
\begin{align*}
g_{ijk} \cdot g_{ikl} & = g_{jkl} \cdot g_{ijl}, \\
A_{ik} & = A_{ij} + A_{jk} + \frac{1}{\mathsf{i}} g^{-1}_{ijk}dg_{ijk}, \\
dA_{ij} & = B_j - B_i.
\end{align*}
\end{defn}
\begin{rem}\cite[Section 1.1]{Wal2007phd}\label{clager}
Let $(L,Y,M)$ be a bundle gerbe, and let $\left\{U_i\right\}_{i \in I}$ be a good cover of $M$. For each $i \in I$, choose a local section $s_i:U_i \longrightarrow Y$. These determine sections 
$$(s_i,s_j):U_i \cap U_j \longrightarrow Y^{[2]}$$
of the projection $Y^{[2]} \longrightarrow M$ over pairwise overlaps. Pulling back the line bundle $\lambda:L \longrightarrow Y^{[2]}$ along $(s_i,s_j)$ yields
\[
L_{ij} = (U_i \cap U_j) \times_{Y^{2}} L,
\]
as in the commutative diagram
\[
\begin{tikzcd}
L_{ij} \arrow[r,"pr_2"] \arrow[d,"{(s_i,s_j)^*\lambda}"'] & L \arrow[d,"\lambda"] \\
U_i \cap U_j \arrow[r,"{(s_i,s_j)}"'] & Y^{[2]}
\end{tikzcd}.
\]
We may regard the map $(s_i,s_j)^*\lambda$ as projection to the first factor. Choosing unit sections $\delta_{ij}$ of the line bundles $L_{ij} \longrightarrow U_i \cap U_j$ gives maps
\begin{equation}\label{gijkgerbe}
g_{ijk}:U_i \cap U_j \cap U_k \longrightarrow U(1)
\end{equation}
defined by the relation
$$m\bigl(\delta_{ij}(x),\delta_{jk}(x)\bigr) = g_{ijk}(x) \delta_{ik}(x)$$
for $x \in U_i \cap U_j \cap U_k$. If the bundle gerbe carries a connection and curving $(\nabla,C)$, then pulling back $\nabla$ along $(s_i,s_j)$ defines $1$-forms $A_{ij}$ on overlaps $U_i \cap U_j$. Similarly, restricting the curving $C$ along $Y \longrightarrow U_i$ gives $2$-forms $B_i$ on each $U_i$. Altogether, the data $(g_{ijk},A_{ij},B_i)$ form a cocycle associated to the bundle gerbe. \\
There are two cases to consider: \\
(\rNum{1}) without considering a connection and curving on our bundle gerbe, and \\
(\rNum{2}) with connection and curving included. \\
In case (\rNum{1}), Equation (\ref{gijkgerbe}) yields a cohomology class
$$[g_{ijk}] \in \check{H}^2\bigr(M,\underline{U(1)}\bigr).$$
From the long exact sequence
$$\cdots \longrightarrow 0 = \check{H}^2\bigl(M,\underline{\mathbb{R}}\bigr) \longrightarrow \check{H}^2\bigl(M,\underline{U(1)}\bigr) \longrightarrow \check{H}^3(M,\underline{\mathbb{Z}}) \longrightarrow 0 = \check{H}^3(M,\underline{\mathbb{R}}) \longrightarrow \cdots$$
induced by the exact sequence
$$1 \longrightarrow \underline{\mathbb{Z}} \longrightarrow \underline{\mathbb{R}} \longrightarrow \underline{U(1)} \longrightarrow 1,$$
we obtain the Dixmier-Douady class of the bundle gerbe $(P,Y,M)$ in $H^3(M;\mathbb{Z})$. \\
In case (\rNum{2}), the construction instead produces a class in Deligne cohomology. For further details on case (\rNum{1}), see \cite[on page 245]{MR2681698}, and for case (\rNum{2}), see \cite[Section 1.3]{Wal2007phd}.
\end{rem}
\begin{obs}\label{equiofbungerbes}
The bundle gerbes in Examples \ref{murbun} and \ref{ourbun} are equivalent. In Example \ref{murbun}, the class of the line bundle $L_{W \cap W^\prime}$ is $1$ in $H^2\bigl(W \cap W\prime;\mathbb{Z}\bigr)$, since
\[
H^2(W \cap W^\prime;\mathbb{Z}) \cong H^2(S^2;\mathbb{Z})
\]
, and the corresponding line bundle over $S^2$ in the line bundle associated to the Hopf map, which, by the convention fixed in the introduction, represents the generator $1$ in $H^2(S^2;\mathbb{Z})$. It follows the resulting class in $H^3(S^3;\mathbb{Z})$ is $1$, as it is the image of the class of its non-trivial line bundle $L_{W \cap W^\prime}$ under the following associated connecting homomorphism in the Mayer-Vietoris sequence
\begin{equation}\label{mayervie1}
\begin{tikzcd}
\cdots \arrow[r] & 0 = \check{H}^2(W,\mathbb{Z}) \oplus \check{H}^2(W^\prime,\mathbb{Z}) \arrow[r] & \check{H}^2\bigl(W\cap W^\prime,\mathbb{Z}\bigr) \arrow[d] & \\  & & \check{H}^3\bigl(S^3,\mathbb{Z}\bigr) \arrow[r] & 0 = \check{H}^3(W,\mathbb{Z}) \oplus \check{H}^3(W^\prime,\mathbb{Z}) \arrow[d] \\
& & & \vdots
\end{tikzcd}
\end{equation}
since
\[
\check{H}^3(S^3,\mathbb{Z}) \cong H^3(S^3;\mathbb{Z})
\]
and the positive generator of $H^2(W \cap W^\prime;\mathbb{Z})$ maps to the positive generator of $H^3(S^3;\mathbb{Z})$ under the connecting homomorphism in (\ref{mayervie1}). \\
The bundle gerbe in Example \ref{ourbun} has the class $1$ in $H^3(S^3;\mathbb{Z})$. The transition function and the connection on the line bundle of our bundle gerbe are given in the following \v{C}ech-de Rham double complex\footnote{See \cite[Section $8$ in Chapter $2$]{MR658304} for the notion of \v{C}ech-de Rham complexes.}diagram \cite[Remark 3.1 on page 6]{MR3764535}:
\begin{figure}[H]
\centering
\begin{tikzcd}
\draw[thick] (0,0) -- (0,3.5);
\draw[thick] (0,0) -- (6,0);
\draw (1,0) node[below]{\mathbb{R}^2};
\draw (2.5,0) node[below]{\mathbb{R}^2 \times \mathbb{Z}^2};
\draw (4.8,0) node[below]{\mathbb{R}^2 \times \mathbb{Z}^2 \times \mathbb{Z}^2};
\draw (0,1) node[left]{\underline{U(1)}};
\draw (0,2) node[left]{\Omega^1};
\draw (0,3) node[left]{\Omega^2};
\draw (1,2.7) node[above]{dx \wedge dy};
\draw (1,1.7) node[above]{xdy};
\draw[|->] (1,2.3) -- (1,2.8);
\draw[|->] (1.5,1.95) -- (2,1.95);
\draw (2.5,1.7) node[above]{m dy};
\draw (2.6,0.7) node[above]{e^{2\pi my\mathsf{i}}};
\draw[|->] (2.8,1.2) -- (2.8,1.7);
\draw[|->] (3.2,0.9) -- (4.3,0.9);
\draw (4.6,0.7) node[above]{1};
\end{tikzcd}
\caption{\v{C}ech-de Rham complex diagram}
\label{cechderhamcom}
\end{figure}
The class of this bundle gerbe is obtained as similar to the Mayer-Vietoris sequence in $(\ref{mayervie1})$.
\end{obs}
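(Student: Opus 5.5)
The plan is to compute the Dixmier--Douady class of each of the two bundle gerbes and show that both equal the positive generator $1 \in H^3(S^3;\mathbb{Z}) \cong \mathbb{Z}$. By the classification recalled in Remark \ref{clager}, bundle gerbes over $S^3$ are determined up to stable isomorphism by this class, so equality of the classes gives the asserted equivalence.

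The first step is a general lemma covering both examples. Suppose a gerbe $(L,Y,M)$ has $Y = U_1 \sqcup U_2$ for an open cover $\{U_1,U_2\}$ of $M$, the line bundles over the diagonal pieces $U_1$, $U_2$ are trivial, and the gerbe product is the canonical one, with $L_{U_2 \cap U_1} = L^{-1}_{U_1\cap U_2}$ and the product pairing these to the trivial bundle. Then the Dixmier--Douady class is $\delta\bigl(c_1(L_{U_1\cap U_2})\bigr)$, where $\delta$ is the Mayer--Vietoris connecting map $H^2(U_1\cap U_2) \to H^3(M)$.

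To prove the lemma, refine to a good cover subordinate to $\{U_1,U_2\}$ and choose the sections $s_i$ of Remark \ref{clager} accordingly. The unit sections $\delta_{ij}$ then pull back from local sections of $L_{U_1\cap U_2}$, and $g_{ijk}$ is the Čech coboundary of the transition data of $L_{U_1\cap U_2}$. Passing through $1 \to \underline{\mathbb{Z}} \to \underline{\mathbb{R}} \to \underline{U(1)} \to 1$, one sees that the resulting zig-zag in the Čech double complex is exactly the Mayer--Vietoris connecting homomorphism applied to $c_1$.

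For Example \ref{murbun}, $W\cap W' \simeq S^2$ via $f$, and $L_{W\cap W'} = f^*(\text{Hopf})$ has class $1$. Since $W$ and $W'$ are contractible, Mayer--Vietoris makes $\delta: H^2(W\cap W') \to H^3(S^3)$ an isomorphism. With $S^2 = f(W\cap W')$ oriented as the boundary of the upper hemisphere ball, $\delta$ sends $1$ to $1$.

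For Example \ref{ourbun}, $W_N$ and $W_S$ are solid tori and $W_N\cap W_S \simeq T^2$. The Čech--de Rham diagram in Figure \ref{cechderhamcom} shows that the curvature of $L$ is $dx\wedge dy$, so $\int_{T^2} c_1 = 1$. In Mayer--Vietoris,
\[
H^1(W_N)\oplus H^1(W_S) \longrightarrow H^1(T^2)
\]
is an isomorphism $\mathbb{Z}^2 \to \mathbb{Z}^2$, because the Hopf preimages of the two hemispheres are a genus-one Heegaard splitting of $S^3$ with meridian of one glued to longitude of the other. Also $H^2(W_N) = H^2(W_S) = 0$ and $H^3(W_N)\oplus H^3(W_S) = 0$. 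Hence $\delta: H^2(T^2) \to H^3(S^3)$ is an isomorphism, and the class is $\pm 1$.

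The main obstacle is pinning down the signs. We must fix orientations so that the generator of $H^2(T^2)$ compatible with the coordinates $(x,y)$ through $f$ maps to $+1$, and likewise check that the Hopf generator on $W\cap W'$ maps to $+1$. I would handle this by orienting each overlap as the boundary of $W$ (respectively $W_N$) with its orientation from $S^3$. I would then check the sign of $\delta$ via the de Rham description: $\delta[\omega] = [d(\chi\,\omega)]$ for a bump function $\chi$, together with Stokes' theorem.

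If a sign mismatch remains in Example \ref{ourbun}, replace the identification $f$ by composing with the orientation-reversing swap of the torus factors. This is permitted because $f$ is only a chosen homotopy equivalence, and with it the two classes agree.
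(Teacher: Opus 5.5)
Your proposal is correct and follows essentially the paper's route. Both Dixmier--Douady classes are the images of the overlap line bundle's Chern class under a Mayer--Vietoris connecting map, which is an isomorphism because the pieces have vanishing $H^2$ and $H^3$; you also make explicit the clutching lemma and the torus case that the paper only asserts. One caution: drop the fallback of swapping the torus factors in $f$, because that replaces the gerbe of Example \ref{ourbun} by its dual rather than proving anything about it, so the sign must genuinely be checked. This is manageable once you note that with $u=x+\mathsf{i}y$ and $v=z+\mathsf{i}w$ the paper's Hopf map reads $X+\mathsf{i}Y=2u\bar v$, $Z=\lvert v\rvert^2-\lvert u\rvert^2$. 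That is the complex Hopf map $[u:v]$ in orientation-compatible coordinates, and with the natural reading of Example \ref{ourbun} (base angle as the first torus coordinate, fibre angle oriented by the circle action as the second) my own check gives matching signs.
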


\section{Categorified clutching functions}
In this section, we develop the necessary ingredients for categorifying the transition function of the classical Hopf map,
\[
V_N \cap V_S \longrightarrow U(1),
\]
where $\left\{V_N,V_S\right\}$ denotes the standard two-open cover of $S^2$ by the northern and southern hemispheres. Our goal is to define a categorical transition map
$$V_N \cap V_S \longrightarrow \mathcal{U}(1),$$
which will serve as the transition data for the categorical Hopf map. Given a $2$-cocycle as in \cite{MR3894086} and \cite{MR2805195}, one can construct an associated categorical bundle. \\
For the construction in the next section, we work with a six-open cover of $S^2$, as this is better adapted to Wockel’s formalism. By contrast, the two-open cover suffices when working within Waldorf’s framework. In what follows, we define the maps that will be used to specify the transition functions of the categorical Hopf map.
\begin{defn}\label{coverofs1}
Let $\mathcal{U}$ denote the \v{C}ech groupoid associated with the open cover 
$\left\{U_1,U_2,U_3,U_4\right\}$ of $S^1$, where
\begin{align*}
U_1 & = \left\{(X,Y) \in S^1 ~ \big\vert ~ X > 0\right\}, \\
U_2 & = \left\{(X,Y) \in S^1 ~ \big\vert ~ X < 0\right\}, \\
U_3 & = \left\{(X,Y) \in S^1 ~ \big\vert ~ Y > 0\right\}, \\
U_4 & = \left\{(X,Y) \in S^1 ~ \big\vert ~ Y < 0\right\}.
\end{align*}
\begin{figure}[H]
\centering
\begin{tikzpicture}
%U_1
\draw[red,thick] (0,-1) arc [start angle=-90,end angle=90,radius=1cm];

%U_2
\draw[blue,thick] (3,1) arc [start angle=90,end angle=270,radius=1cm];

%U_3
\draw[black!30!green,thick] (7,0) arc [start angle=0,end angle=180,radius=1cm];

%U_4
\draw[brown,thick] (8,0) arc [start angle=-180,end angle=0,radius=1cm];

%labels
\node at (0,-2) {$U_1$};
\node at (3,-2) {$U_2$};
\node at (6,-2) {$U_3$};
\node at (9,-2) {$U_4$};
\end{tikzpicture}
\caption{The cover of $S^1$}
\end{figure}
\noindent
Thus, $\mathcal{U}$ has \\
(\rNum{1}) Objects: $$U_1 \sqcup U_2 \sqcup U_3 \sqcup U_4$$
(\rNum{2}) Arrows:
$$(U_1 \cap U_1) \sqcup (U_1 \cap U_3) \sqcup (U_1 \cap U_4) \sqcup (U_2 \cap U_2) \sqcup (U_2 \cap U_3) \sqcup (U_2 \cap U_4) \sqcup (U_3 \cap U_3) \sqcup (U_3 \cap U_1)$$ 
$$\sqcup (U_3 \cap U_2) \sqcup (U_4 \cap U_4) \sqcup (U_4 \cap U_1) \sqcup (U_4 \cap U_2)$$
An object in $\mathcal{U}$ can be written as a pair $\bigl(i,(X,Y)\bigr)$, where $(X,Y) \in U_i$. For $U_i \cap U_j \neq \varnothing$, an arrow is given by $\bigl(i,j,(X,Y)\bigr)$ with $(X,Y) \in U_i \cap U_j$. The source and target of such an arrow are
$$\mathsf{s}\bigl(i,j,(X,Y)\bigr) = \bigl(i,(X,Y)\bigr), ~~~~~ \mathsf{t}\bigl(i,j,(X,Y)\bigr) = \bigl(j,(X,Y)\bigr).$$
In diagrammatic form,
\[
\begin{tikzcd}
\bigl(i,(X,Y)\bigr) \arrow[rr,"{\bigl(i,j,(X,Y)\bigr)}"] & & \bigl(j,(X,Y)\bigr)
\end{tikzcd}
\]
For the general definition of \v{C}ech groupoids, see \cite[Section 3 on page 370]{MR3089401}.
\end{defn}
\begin{defn}
\cite[Section 1.3 on page 207]{MR1950948} Let $G$ be a group acting on a topological space $X$ from the left. The action groupoid $[G \times X \rightrightarrows X]$ is defined to have object set $X$ and morphism set $G \times X$. The source map is given by the projection to the first factor, while the target map is the action $G \times X \longrightarrow X$. The composition composable arrows is induced by the multiplication in $G$. \\
In the literature, this groupoid is sometimes denoted by $X{\salash}G$ or $[X/G]$. For the sake of consistency, throughout this thesis we adopt the notation $[G \times X \rightrightarrows X]$ for the action groupoid associated with a left action of $G$ on $X$.
\end{defn}
\begin{const}\label{phiandkappa}
We aim to define a groupoid homomorphism from $\mathcal{U}$ to $\mathcal{U}(1)$. To begin, consider the groupoid homomorphism $\phi:[\mathbb{Z} \times \mathbb{R} \rightrightarrows \mathbb{R}] \longrightarrow \mathcal{U}(1)$ defined by:
\begin{align*}
\phi_0 & : r \mapsto r \\
\phi_1 & : (m,r) \mapsto \bigl((m,[r]),r\bigr).
\end{align*}
Here, the action of $\mathbb{Z}$ on $\mathbb{R}$ in the groupoid $[\mathbb{Z} \times \mathbb{R} \rightrightarrows \mathbb{R}]$ is given by addition. \\
It remains to define a groupoid homomorphism
$$\kappa:\mathcal{U} \longrightarrow [\mathbb{Z} \times \mathbb{R} \rightrightarrows \mathbb{R}],$$ which, upon composition with $\phi$, yields the desired groupoid homomorphism $\mathcal{U} \longrightarrow \mathcal{U}(1)$. We define $\kappa$ so as to make the following diagram commute:
\begin{equation}\label{kappacomm}
\begin{tikzcd}
\mathcal{U} \arrow[rr,dashed,"\kappa"] \arrow[dr,"p"'] & & {[}\mathbb{Z} \times \mathbb{R} \rightrightarrows \mathbb{R}{]} \arrow[dl,"q"] \\
& {[}S^1 \rightrightarrows S^1{]} &
\end{tikzcd}
\end{equation}
where $p$ and $q$ are groupoid homomorphisms defined by:
\begin{align*}
p_0 & : (X,Y) \in U_i \mapsto (X,Y) \\
p_1 & : (X,Y) \in U_i \cap U_j \mapsto (X,Y)
\end{align*}
and
\begin{align*}
q_0 & : r \mapsto \big(\cos(2\pi r),\sin(2\pi r)\big) \\
q_1 & : (m,r) \mapsto \big(\cos (2\pi r+2\pi m),\sin(2\pi r+2\pi m)\big).
\end{align*}
Accordingly, the functions $\arcsin$ and $\arccos$ appear in the following definition of $\kappa$. \\
We now define $\kappa$ to be given by \\
$\bullet$ at the level of objects:
\begin{align*}
\kappa_0 & : \begin{cases} (X,Y) \in U_1 \mapsto \frac{1}{2\pi} \times \arcsin(Y) \\[0.5cm] (X,Y) \in U_2 \mapsto \frac{1}{2\pi} \times \arcsin(-Y) + \frac{1}{2} \\[0.5cm] (X,Y) \in U_3 \mapsto \frac{1}{2\pi} \times \arccos(X) \\[0.5cm] (X,Y) \in U_4 \mapsto \frac{1}{2\pi} \times \arccos(-X)+\frac{1}{2}\end{cases}
\end{align*}
\begin{figure}[H]
\centering
\begin{tikzpicture}
%U_1
\draw[red,thick] (0,-1) arc [start angle=-90,end angle=90,radius=1cm];

%U_2
\draw[blue,thick] (4.5,1) arc [start angle=90,end angle=270,radius=1cm];

%U_3
\draw[black!30!green,thick] (9,-0.5) arc [start angle=0,end angle=180,radius=1cm];

%U_4
\draw[brown,thick] (11.5,0.5) arc [start angle=180,end angle=360,radius=1cm];

%labels
\node at (0,-2) {$U_1$};
\node at (4.5,-2) {$U_2$};
\node at (8,-2) {$U_3$};
\node at (12.5,-2) {$U_4$};
\node at (2,0) {$\overset{\kappa_0} \longrightarrow (-\frac{1}{4},\frac{1}{4})$};
\node at (5.5,0) {$\overset{\kappa_0} \longrightarrow (\frac{1}{4},\frac{3}{4})$};
\node at (10,0) {$\overset{\kappa_0} \longrightarrow (0,\frac{1}{2})$};
\node at (14.5,0) {$\overset{\kappa_0} \longrightarrow (\frac{1}{2},1)$};

\end{tikzpicture}
\caption{The map $\kappa_0$}
\end{figure}
$\bullet$ at the level of morphisms:
\begin{align*}
\kappa_1: & (X,Y) \in U_1 \cap U_1 \mapsto \big(0,\frac{1}{2\pi} \times \arcsin(Y)\big), \\ & (X,Y) \in U_2 \cap U_2 \mapsto \big(0,\frac{1}{2\pi} \times \arcsin(-Y) + \frac{1}{2}\big), \\ & (X,Y) \in U_3 \cap U_3 \mapsto \big(0,\frac{1}{2\pi} \times \arccos(X)\big), \\\  & (X,Y) \in U_4 \cap U_4 \mapsto \big(0,\frac{1}{2\pi} \times \arccos(-X)+\frac{1}{2}\big), \\ & (X,Y) \in U_1 \cap U_3 \mapsto \big(0,\frac{1}{2\pi} \times \arcsin(Y)\big), \qquad (X,Y) \in U_1 \cap U_4 \mapsto \big(1,\frac{1}{2\pi} \times \arcsin(Y)\big), \\ & (X,Y) \in U_2 \cap U_3 \mapsto \big(0,\frac{1}{2\pi} \times \arcsin(-Y)+\frac{1}{2}\big), \\ & (X,Y) \in U_2 \cap U_4 \mapsto \big(0,\frac{1}{2\pi} \times \arcsin(-Y)+\frac{1}{2}\big), \\ & (X,Y) \in U_3 \cap U_1 \mapsto \big(0,\frac{1}{2\pi} \times \arccos(X)\big), \qquad (X,Y) \in U_3 \cap U_2 \mapsto \big(0,\frac{1}{2\pi} \times \arccos(X)\big), \\ & (X,Y) \in U_4 \cap U_1 \mapsto \big(-1,\frac{1}{2\pi} \times \arccos(-X)+\frac{1}{2}\big), \\
& (X,Y) \in U_4 \cap U_2 \mapsto \big(0,\frac{1}{2\pi} \times \arccos(-X) + \frac{1}{2}\big).
\end{align*}
\begin{rem}
Once $\kappa_0$ is defined, the map $\kappa_1$ is determined by $\kappa_0$, since the source and target of each arrow are known, as well as their images under $\kappa_0$. The integers appearing in the definition of $\kappa_1$ thus arise as the differences between the targets and sources of arrows. When verifying the commutativity of the diagram (\ref{kappacomm}) in Construction \ref{phiandkappa}, it is important to note that in our framework the ranges of the functions $\arcsin$ and $\arccos$ are $(-\frac{\pi}{2},\frac{\pi}{2})$ and $(0,\pi)$, respectively.
\end{rem}
\begin{figure}[H]
\centering
\begin{tikzpicture}

\draw (-5,0) -- (5,0);
\node at (-1,0) {$|$};
\node at (-1,-0.5) {-$\frac{1}{4}$};
\node at (-2,0) {$|$};
\node at (-2,-0.5) {-$\frac{1}{2}$};
\node at (-3,0) {$|$};
\node at (-3,-0.5) {-$\frac{3}{4}$};
\node at (-4,0) {$\Big|$};
\node at (-4,-0.5) {-$1$};
\node at (0,0) {$\Big|$};
\node at (0,-0.5) {$0$};
\node at (1,0) {$|$};
\node at (1,-0.5) {$\frac{1}{4}$};
\node at (2,0) {$|$};
\node at (2,-0.5) {$\frac{1}{2}$};
\node at (3,0) {$|$};
\node at (3,-0.5) {$\frac{3}{4}$};
\node at (4,0) {$\Big|$};
\node at (4,-0.5) {$1$};
\draw[thick,black!30!green,thick] (0,2) -- (2,2);
\draw[thick,red] (-1,1) -- (1,1);
\draw[thick,blue] (1,3) -- (3,3);
\draw[thick,brown] (2,4) -- (4,4);
\node[red] at (0,0.75) {$U_1$};
\node[black!30!green,thick] at (1,1.75) {$U_3$};
\node[blue] at (2,2.75) {$U_2$};
\node[brown] at (3,3.75) {$U_4$};

\end{tikzpicture}
\caption{Range of $\kappa_0$}
\end{figure}
\begin{table}[H]
\begin{center}
\resizebox{0.9\textwidth}{!}{
\begin{tabular}{c || c | c | c | c}
$\cap$ & $U_1$ & $U_2$ & $U_3$ & $U_4$ \\
\hhline{= || = | = | = | =}
& & \cellcolor{gray!20!white} & & \\
$U_1$ & $\big(0,\frac{1}{2\pi} \times \arcsin(Y)\big)$ & \cellcolor{gray!20!white}  & $\big(0,\frac{1}{2\pi} \times \arcsin(Y)\big)$ & $\big(1,\frac{1}{2\pi} \times \arcsin(Y)\big)$ \\[0.5cm]
\hline
& \cellcolor{gray!20!white} & & & \\
$U_2$ & \cellcolor{gray!20!white} & $\big(0,\frac{1}{2\pi} \times \arcsin(-Y) + \frac{1}{2}\big)$ & $\big(0,\frac{1}{2\pi} \times \arcsin(-Y)+\frac{1}{2}\big)$ & $\big(0,\frac{1}{2\pi} \times \arcsin(-Y)+\frac{1}{2}\big)$ \\[0.5cm]
\hline
& & & & \cellcolor{gray!20!white} \\
$U_3$ & $\big(0,\frac{1}{2\pi} \times \arccos(X)\big)$ & $\big(0,\frac{1}{2\pi} \times \arccos(X)\big)$ & $\big(0,\frac{1}{2\pi} \times \arccos(X)\big)$ & \cellcolor{gray!20!white} \\[0.5cm]
\hline
& & & \cellcolor{gray!20!white} & \\
$U_4$ & $\big(-1,\frac{1}{2\pi} \times \arccos(-X)+\frac{1}{2}\big)$ & $\big(0,\frac{1}{2\pi} \times \arccos(-X) + \frac{1}{2}\big)$ & \cellcolor{gray!20!white} & $\big(0,\frac{1}{2\pi} \times \arccos(-X) + \frac{1}{2}\big)$ \\
& & & \cellcolor{gray!20!white} &
\end{tabular}
}
\end{center}
\caption{The map $\kappa_1$ (source and targets are sorted vertically and horizontally, respectively)}
\end{table}
\end{const}
\begin{lem}
The morphism $\phi$, defined in Construction \ref{phiandkappa}, is a groupoid homomorphism.
\end{lem}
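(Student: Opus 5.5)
The plan is a direct verification that $\phi=(\phi_0,\phi_1)$ respects every structure map of Notation \ref{groupoid}. Here the source is the action groupoid $[\mathbb{Z}\times\mathbb{R}\rightrightarrows\mathbb{R}]$, in which $(m,r)\colon r\to r+m$ and composition adds the integer labels. The target is $\mathcal{U}(1)$ with the explicit source, target and composition formulas recorded in the Example introducing $\mathcal{U}(1)$. Continuity of $\phi_0$ and $\phi_1$ is immediate, since they are built from the identity, inclusion and the quotient $\mathbb{R}\to\mathbb{R}/\mathbb{Z}$. The content therefore lies in the categorical identities.

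First I would check sources and targets. We have $\mathsf{s}\bigl(\phi_1(m,r)\bigr)=\mathsf{s}\bigl((m,[r]),r\bigr)=r=\phi_0(r)$ and $\mathsf{t}\bigl(\phi_1(m,r)\bigr)=r+m=\phi_0(r+m)$. This is exactly the requirement that $\phi$ respect $\mathsf{s}$ and $\mathsf{t}$.

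Next I would check units. The identity of $r$ in the action groupoid is $(0,r)$, and $\phi_1(0,r)=\bigl((0,[r]),r\bigr)$. The identity of $r$ in $\mathcal{U}(1)$ is $\bigl((0,[0]),r\bigr)$. So this step needs $[r]=[0]$, which holds only for $r\in\mathbb{Z}$.

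Then I would check composition. Take composable arrows $(m,r)$ and $(m',r+m)$, whose composite is $(m+m',r)$. Their images are $\bigl((m,[r]),r\bigr)$ and $\bigl((m',[r+m]),r+m\bigr)=\bigl((m',[r]),r+m\bigr)$, using $m\in\mathbb{Z}$. By the composition rule in $\mathcal{U}(1)$ their composite is $\bigl((m+m',[2r]),r\bigr)$. On the other hand, $\phi_1(m+m',r)=\bigl((m+m',[r]),r\bigr)$. So the identity to prove is $[2r]=[r]$ in $\mathbb{R}/\mathbb{Z}$, which again holds only for $r\in\mathbb{Z}$. Inverses are handled automatically once units and composition are settled.

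The composition step and the unit step are the main obstacle, because with the formula $\phi_1(m,r)=\bigl((m,[r]),r\bigr)$ exactly as written, the required identities fail for general $r$. I would therefore first re-read the conventions for $\mathcal{U}(1)$ in \cite{MR3764535}, in case the $\mathbb{R}/\mathbb{Z}$-component composes differently there. If it does not, the verification above cannot go through as stated. In that case the only way to complete it is to change the second component of $\phi_1$, which amounts to proving that a different map is a homomorphism rather than the statement as worded. I would not claim the lemma is established until that discrepancy is resolved.
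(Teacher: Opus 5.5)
Your computations are correct, and nothing is missing on your side. The paper fixes the composition in $\mathcal{U}(1)$ as: $((m,[a]),r)$ followed by $((m',[a']),r+m)$ is $((m+m',[a+a']),r)$. This forces the identity at $r$ to be $((0,[0]),r)$ and the inverse of $((m,[a]),r)$ to be $((-m,[-a]),r+m)$. With these conventions the map $\phi_1(m,r)=((m,[r]),r)$ is not a functor. You can say this outright instead of hedging:
\begin{itemize}
\item $\phi_1(0,\tfrac12)=((0,[\tfrac12]),\tfrac12)$ is not an identity arrow.
\item For $r=\tfrac13$ the composite of the images is $((m+m',[\tfrac23]),\tfrac13)$, whereas $\phi_1(m+m',\tfrac13)=((m+m',[\tfrac13]),\tfrac13)$.
\end{itemize}

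The paper's proof follows exactly your route, checking $\mathsf{s}$, $\mathsf{t}$, $\mathsf{mult}$, $\mathsf{unit}$, $\mathsf{inv}$ in turn, and its source and target checks agree with yours. It gets past the other three only by asserting
\begin{itemize}
\item $\mathsf{unit}(r)=((0,[r]),r)$,
\item $((n,[r+m]),r+m)\circ((m,[r]),r)=((m+n,[r]),r)$,
\item $\mathsf{inv}((m,[r]),r)=((-m,[r+m]),r+m)$.
\end{itemize}
The composition rule in the paper's own Example instead gives $((0,[0]),r)$, $((m+n,[2r]),r)$ and $((-m,[-r]),r+m)$ respectively. So the discrepancy you found is a genuine error in the paper's argument, not a gap in yours.

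Your proposed repair is the right one. The paper's three formulas are exactly the stated composition law transported along the relabelling $((m,[a]),r)\mapsto((m,[a+r]),r)$. So what the paper actually verifies is the map which, in the Example's own coordinates, reads $(m,r)\mapsto((m,[0]),r)$. That map is a functor by inspection: source $r$, target $r+m$, composite $((m+n,[0]),r)$, identity $((0,[0]),r)$, and inverses are then automatic. It is also the version used later in Construction~\ref{cathopf}, where every $h_{ijk}$ has $\mathbb{R}/\mathbb{Z}$-component $[0]$.

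You have two options:
\begin{itemize}
\item restate the lemma with $\phi_1(m,r)=((m,[0]),r)$; or
\item keep $\phi_1$ as written and explicitly adopt the shifted encoding of morphisms of $\mathcal{U}(1)$. Under that encoding the composite of $((m,[b]),r)$ and $((m',[b']),r+m)$ is $((m+m',[b+b'-r]),r)$.
\end{itemize}
With the conventions as printed, the lemma is false.
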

\begin{proof}
We need to show that $\phi$ preserves all structure maps $\mathsf{s}$, $\mathsf{t}$, $\mathsf{mult}$, $\mathsf{unit}$ and $\mathsf{inv}$.
\begin{align*}
\phi_0\big(\mathsf{s}(m,r)\big) & = \phi_0(r) \\
& = r \\
\mathsf{s}\big(\phi_1(m,r)\big) & = \mathsf{s}\bigl((m,[r]),r\bigr) \\
& = r
\end{align*}
So, $\phi$ preserves the source map.
\begin{align*}
\phi_0\big(\mathsf{t}(m,r)\big) & = \phi_0(r+m) \\
& = r+m \\
\mathsf{t}\big(\phi_1(m,r)\big) & = \mathsf{t}\bigl((m,[r]),r\bigr) \\
& = r+m
\end{align*}
So, $\phi$ preserves the target map.
\begin{align*}
\phi_1\bigl((n,r+m) \circ (m,r)\bigr) & = \phi_1(m+n,r) \\
& = \bigl((m+n,[r]),r\bigr) \\
\phi_1(n,r+m) \circ \phi_1(m,r) & = \bigl((n,[r+m]),r+m\bigr) \circ \bigl((m,[r]),r\bigr) \\
& = \bigl((m+n,[r]),r\bigr)
\end{align*}
So, $\phi$ preserves the multiplication map.
\begin{align*}
\phi_1(\mathsf{unit}(r)) & = \phi_1(0,r) \\
& = \bigl((0,[r]),r\bigr) \\
\mathsf{unit}\bigl(\phi_0(r)\bigr) & = \mathsf{unit}(r) \\
& = \bigl((0,[r]),r\bigr)
\end{align*}
So, $\phi$ preserves the unit map.
\begin{align*}
\phi_1\big(\mathsf{inv}(m,r)\big) & = \phi_1(-m,r+m) \\
& = \bigl((-m,[r+m]),r+m\bigr) \\
\mathsf{inv}\bigl(\phi_1(m,r)\bigr) & = \mathsf{inv}\bigl((m,[r]),r\bigr) \\
& = \bigl((-m,[r+m]),r+m\bigr)
\end{align*}
So, $\phi$ preserves the inverse map.
\end{proof}
\begin{lem}
The morphism $\kappa$, defined in Construction \ref{phiandkappa}, is a groupoid homomorphism.
\end{lem}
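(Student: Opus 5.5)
The plan is to mirror the proof of the preceding lemma for $\phi$: check that $\kappa$ preserves the five structure maps $\mathsf{s}$, $\mathsf{t}$, $\mathsf{mult}$, $\mathsf{unit}$ and $\mathsf{inv}$ of Notation \ref{groupoid}. The first step is to confirm that each component of $\kappa_1$ lands in $\mathbb{Z}\times\mathbb{R}$. Each listed value has the form $(m_{ij},\kappa_0(i,(X,Y)))$, where the integer $m_{ij}$ depends only on the pair $(i,j)$ and not on the point. So preservation of the source is immediate: the second coordinate of $\kappa_1(i,j,(X,Y))$ is, by construction, $\kappa_0$ of the source object $(i,(X,Y))$.

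The substantive step is the target. We must show that for every nonempty overlap $U_i\cap U_j$ and every $(X,Y)$ in it,
\[
\kappa_0\bigl(i,(X,Y)\bigr)+m_{ij}=\kappa_0\bigl(j,(X,Y)\bigr).
\]
I would argue conceptually first. Both sides are lifts along $q_0$ of the same point $(X,Y)$, which is exactly the commutativity of diagram (\ref{kappacomm}) on objects. Hence their difference is a continuous integer-valued function on $U_i\cap U_j$.

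Each nonempty overlap of distinct sets is a single open quarter-arc, hence connected. So it suffices to evaluate both sides at one point of each quarter-arc, using the stated ranges $(-\frac{\pi}{2},\frac{\pi}{2})$ of $\arcsin$ and $(0,\pi)$ of $\arccos$. For example, on $U_1\cap U_4$ take $(X,Y)=(\frac{1}{\sqrt2},-\frac{1}{\sqrt2})$. Then $\kappa_0$ on $U_1$ gives $-\frac18$ and $\kappa_0$ on $U_4$ gives $\frac{1}{2\pi}\arccos(-\frac{1}{\sqrt2})+\frac12=\frac38+\frac12=\frac78$, so the difference is $1=m_{14}$. The cases $U_4\cap U_1$ (giving $-1$), $U_1\cap U_3$, $U_2\cap U_3$, $U_2\cap U_4$ and their reverses (giving $0$), and the diagonal cases (trivially $0$) are handled the same way. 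Alternatively, one can verify the identities directly: for instance on $U_1\cap U_3$ we have $\arcsin Y=\arccos X$ since $X,Y>0$.

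Once the target identity holds, the remaining axioms are formal. Units map to $(0,\kappa_0(i,x))$, which is the identity in the action groupoid. For composition $(j,k,x)\circ(i,j,x)=(i,k,x)$, we need $m_{ik}=m_{ij}+m_{jk}$ on triple overlaps. This follows from the target identity by telescoping the $\kappa_0$ values, so no separate computation is required. Inverses follow from $m_{ji}=-m_{ij}$, which is again a consequence of the target identity, or equivalently a formal consequence of preserving composition and units.

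I expect the only real obstacle to be the careful branch bookkeeping in the target check, in particular catching the two wrap-around overlaps $U_1\cap U_4$ and $U_4\cap U_1$, where the lifts differ by $\pm1$. I will organise this check using the table of $\kappa_1$, filling it in row by row.
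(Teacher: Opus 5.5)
Your proposal is correct, but it organises the verification differently from the paper. The paper writes out the five checks for the single overlap $U_2\cap U_3$ and declares the remaining cases analogous.

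You instead observe that $\kappa_1(i,j,x)=\bigl(m_{ij},\kappa_0(i,x)\bigr)$ with $m_{ij}$ constant, so preservation of the source is automatic. You also note that $\mathbb{Z}$ acts freely on $\mathbb{R}$, so an arrow of $[\mathbb{Z}\times\mathbb{R}\rightrightarrows\mathbb{R}]$ is determined by its endpoints. Hence units, composition (by telescoping on triple overlaps) and inverses all reduce to the single identity $\kappa_0(i,x)+m_{ij}=\kappa_0(j,x)$. You prove that identity on every overlap at once, using the lifting-plus-connectedness argument and one evaluation per quarter-arc.

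What your route buys:
\begin{itemize}
\item It isolates the only nontrivial content of the lemma.
\item It covers the wrap-around overlaps $U_1\cap U_4$ and $U_4\cap U_1$, where the jump is $\pm 1$. The paper's representative case has jump $0$ and does not exercise them.
\item It supplies what the paper's displayed computation takes for granted. The paper's target and inverse steps on $U_2\cap U_3$ silently use $\frac{1}{2\pi}\arcsin(-Y)+\frac{1}{2}=\frac{1}{2\pi}\arccos(X)$ there.
\item Its multiplication step composes an arrow of $U_2\cap U_3$ with itself, which is not a composable pair; your telescoping argument is the actual check of $\mathsf{mult}$.
\end{itemize}
The paper's route keeps the explicit trigonometric formulas in view, but yours is the one that handles every case.

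One point to tighten. The paper states the commutativity of (\ref{kappacomm}) on objects, $q_0\circ\kappa_0=p_0$, as a design requirement and never verifies it. You should check it explicitly, one line per chart, using the stated ranges of $\arcsin$ and $\arccos$. For example, on $U_2$ we have $\cos\bigl(\arcsin(-Y)+\pi\bigr)=-\sqrt{1-Y^2}=X$ because $X<0$. Alternatively, rely on your direct-identity verification instead.
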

\begin{proof}
Here, it suffices to verify that $\kappa$ preserves all structure maps $\mathsf{s}$, $\mathsf{t}$, $\mathsf{mult}$, $\mathsf{unit}$ and $\mathsf{inv}$. We carry out the check for objects in $U_2$ and $U_3$, and for arrows in $U_2 \cap U_3$; the remaining cases are analogous. Let $(x,y)$ be an arrow in $U_2 \cap U_3$ . Then, its source and target are in $U_2$ and $U_3$, respectively.
\begin{align*}
\kappa_0\big(\mathsf{s}(X,Y)\big) & = \kappa_0(X,Y) \\
& = \frac{1}{2\pi} \times \arcsin(-Y)+\frac{1}{2} \\
\mathsf{s}\big(\kappa_1(X,Y)\big) & = \mathsf{s}(0,\frac{1}{2\pi} \times \arccos(-Y)+\frac{1}{2}) \\
& = \frac{1}{2\pi} \times \arcsin(-Y)+\frac{1}{2}
\end{align*}
So, $\kappa$ preserves the source map.
\begin{align*}
\kappa_0\big(\mathsf{t}(X,Y)\big) & = k_0(X,Y) \\
& = \frac{1}{2\pi} \times \arccos(X) \\
\mathsf{t}\big(\kappa_1(X,Y)\big) & = \mathsf{t}(0,\frac{1}{2\pi} \times \arccos(X)) \\
& = \frac{1}{2\pi} \times \arccos(X)
\end{align*}
So, $\kappa$ preserves the target map.
\begin{align*}
\kappa_1\big((X,Y) \circ (X,Y)\big) & = \kappa_1(X,Y) \\
& = (0,\frac{1}{2\pi} \times \arcsin(-Y)+\frac{1}{2}) \\
\kappa_1(X,Y) \circ \kappa_1(X,Y) & = (0,\frac{1}{2\pi} \times \arcsin(-Y)+\frac{1}{2}) \circ (0,\frac{1}{2\pi} \times \arcsin(-Y)+\frac{1}{2}) \\ & = (0,\frac{1}{2\pi} \times \arcsin(-Y)+\frac{1}{2})
\end{align*}
So, $\kappa$ preserves the multiplication map.
\begin{align*}
\kappa_1(\mathsf{unit}(X,Y)) & = \kappa_1(X,Y) \\
& = (0,\frac{1}{2\pi} \times \arcsin(-Y) + \frac{1}{2}) \\
\mathsf{unit}(\kappa_0(X,Y)) & = \mathsf{unit}(\frac{1}{2\pi} \times \arcsin(-Y) + \frac{1}{2}) \\
& = (0,\frac{1}{2\pi} \times \arcsin(-Y) + \frac{1}{2})
\end{align*}
So, $\kappa$ preserves the unit map.
\begin{align*}
\kappa_1\big(\mathsf{inv}(X,Y)\big) & = \kappa_1(X,Y) \\
& = (0,\frac{1}{2\pi} \times \arccos(X)) \\
\mathsf{inv}\big(k_1(X,Y)\big) & = \mathsf{inv}(0,\frac{1}{2\pi} \times \arcsin(-Y)+\frac{1}{2}) \\
& = (0,\frac{1}{2\pi} \times \arcsin(-Y) + \frac{1}{2})
\end{align*}
So, $\kappa$ preserves the inverse map.
\end{proof}

\section{Categorical Hopf map}
In this section, we construct the categorical Hopf map in Construction \ref{cathopf}. We compute its associated cohomology class in Observation \ref{nontri2bun} and demonstrate in Remark \ref{factorizationdiagram} how the categorical Hopf map factors through the classical Hopf map.
\begin{const}\label{cathopf}
Take the following cover of $S^2$:
\begin{align*}
V_1 & = \left\{(X,Y,Z) \in S^2 ~ \big\vert ~ X > 0\right\}, \qquad
V_2 = \left\{(X,Y,Z) \in S^2 ~ \big\vert ~ X < 0\right\}, \\
V_3 & = \left\{(X,Y,Z) \in S^2 ~ \big\vert ~ Y > 0\right\}, \qquad
V_4  = \left\{(X,Y,Z) \in S^2 ~ \big\vert ~ Y < 0\right\}, \\
V_5 & = \left\{(X,Y,Z) \in S^2 ~ \big\vert ~ Z > 0\right\}, \qquad
V_6  = \left\{(X,Y,Z) \in S^2 ~ \big\vert ~ Z < 0\right\}.
\end{align*}
In the cover image below, The axes are not oriented in the standard way. The positive $Z$-axis is directed out of the plane, the positive $Y$-axis points upward within the plane, and the $X$-axis extends horizontally from left to right.
\begin{figure}[H]
\centering
\resizebox{0.95\textwidth}{!}{
\begin{tikzpicture}
%V_5
\draw[dashed,opacity=0.25] (13,0) arc[start angle=0,end angle=360,x radius=1,y radius=1];
\shade[ball color=blue!10!white,opacity=0.2] (13,0) arc[start angle=0,end angle=360,x radius=1,y radius=1];

%V_6
\draw[dashed] (16,0) arc[start angle=0,end angle=360,x radius=1,y radius=1];
\shade[ball color=blue!10!white,opacity=0.2] (16,0) arc[start angle=0,end angle=360,x radius=1,y radius=1];

%V_1
\draw[dashed] (0.5,0) arc[start angle=0,end angle=360,x radius=0.5,y radius=1];
\shade[ball color=blue!10!white,opacity=0.2] (0,1) arc[start angle=90,end angle=270
,x radius=0.5,y radius=1]
arc[start angle=-90,end angle=90,x radius=1,y radius=1];

%V_2
\draw[dashed] (3.5,0) arc[start angle=0,end angle=360,x radius=0.5,y radius=1];
\shade[ball color=blue!10!white,opacity=0.2] (3,-1) arc[start angle=-90,end angle=90,x radius=0.5,y radius=1]
arc[start angle=90,end angle=270,x radius=1,y radius=1];

%V_3
\draw[dashed,opacity=0.25] (7,0) arc[start angle=0, end angle=180, x radius=1, y radius=0.5];
\draw[dashed] (5,0) arc[start angle=180, end angle=360, x radius=1, y radius=0.5];
\shade[ball color=blue!10!white, opacity=0.2] (7,0) arc[start angle=0, end angle=-180, x radius=1, y radius=.5]
arc[start angle=180, end angle=0, x radius=1, y radius=1];

%V_4
\draw[dashed] (10,0) arc[start angle=0, end angle=360, x radius=1, y radius=0.5];
\shade[ball color=blue!10!white, opacity=0.2] (10,0) arc[start angle=0, end angle=180, x radius=1, y radius=.5]
arc[start angle=180, end angle=360, x radius=1, y radius=1];

%labels
\node at (12,-2) {$V_5$};
\node at (15,-2) {$V_6$};
\node at (0,-2) {$V_1$};
\node at (3,-2) {$V_2$};
\node at (6,-2) {$V_3$};
\node at (9,-2) {$V_4$};

\end{tikzpicture}
}
\caption{The cover of $S^2$}
\label{coverofs2}
\end{figure}
Our aim is to construct a principal $\mathcal{U}(1)$-bundle $\rho:\mathcal{P} \longrightarrow S^2$, following the formalism in \cite[Remark 2.16 on page 583]{MR2805195}. If $S^2$ is covered by the upper and the lower hemispheres, their intersection is the equator, which is homotopy equivalent to a circle. In this setting, it is not possible to define a strict morphism of categorical groups $U(1) \longrightarrow \mathcal{U}(1)$ that serves as the transition function for the prospective principal $\mathcal{U}(1)$-bundle. By instead covering $S^2$ with six opens, it suffices to consider the intersections $V_1 \cap V_5$, $V_2 \cap V_5$, $V_3 \cap V_5$, $V_4 \cap V_5$, since their union is homotopy equivalent to $S^1$. So, we require a morphism $\sigma:S^2 \setminus \left\{(0,0,1),(0,0,-1)\right\} \longrightarrow S^1$ which maps
\[
V_1 \cap V_5 ~~ \text{to} ~~ U_1, \qquad V_2 \cap V_5 ~~ \text{to} ~~ U_2, \qquad V_3 \cap V_5 ~~ \text{to} ~~ U_3, \qquad V_4 \cap V_5 ~~ \text{to} ~~ U_4.
\]
These requirements ensure that $\sigma$ maps any non-empty triple intersections of elements in the cover of $S^2$ to corresponding intersections of elements in the cover of $S^1$. For instance, $\sigma$ maps $V_1 \cap V_3 \cap V_5$ to $U_1 \cap U_3$. Explicitly, we define $\sigma$ to be given by:
$$\sigma:(X,Y,Z) \mapsto (\frac{X}{\sqrt{X^2+Y^2}},\frac{Y}{\sqrt{X^2+Y^2}})$$
Now, we define the following smooth maps using $\phi_0$, $\kappa_0$, $\sigma$:
\begin{align*}
g_{1 5} & : V_1 \cap V_5 \longrightarrow \mathbb{R}, ~~ (X,Y,Z) \mapsto \frac{1}{2\pi} \times \arcsin(\frac{Y}{\sqrt{X^2+Y^2}}) \\
g_{2 5} & : V_2 \cap V_5 \longrightarrow \mathbb{R}, ~~ (X,Y,Z) \mapsto \frac{1}{2\pi} \times \arcsin(- \frac{Y}{\sqrt{X^2+Y^2}}) + \frac{1}{2} \\
g_{3 5} & : V_3 \cap V_5 \longrightarrow \mathbb{R}, ~~ (X,Y,Z) \mapsto \frac{1}{2\pi} \times \arccos(\frac{X}{\sqrt{X^2+Y^2}}) \\
g_{4 5} & : V_4 \cap V_5 \longrightarrow \mathbb{R}, ~~ (X,Y,Z) \mapsto \frac{1}{2\pi} \times \arccos(-\frac{X}{\sqrt{X^2+Y^2}}) + \frac{1}{2} \\
g_{5 1} & : V_5 \cap V_1 \longrightarrow \mathbb{R}, ~~ (X,Y,Z) \mapsto - \frac{1}{2\pi} \times \arcsin(\frac{Y}{\sqrt{X^2+Y^2}}) \\
g_{5 2} & : V_5 \cap V_2 \longrightarrow \mathbb{R}, ~~ (X,Y,Z) \mapsto - \frac{1}{2\pi} \times \arcsin(- \frac{Y}{\sqrt{X^2+Y^2}}) - \frac{1}{2} \\
g_{5 3} & : V_5 \cap V_3 \longrightarrow \mathbb{R}, ~~ (X,Y,Z) \mapsto - \frac{1}{2\pi} \times \arccos(\frac{X}{\sqrt{X^2+Y^2}}) \\
g_{5 4} & : V_5 \cap V_4 \longrightarrow \mathbb{R}, ~~ (X,Y,Z) \mapsto - \frac{1}{2\pi} \times \arccos(-\frac{X}{\sqrt{X^2+Y^2}}) - \frac{1}{2}
\end{align*}
\begin{figure}[H]
\centering
\resizebox{0.9\textwidth}{!}{
\begin{tikzpicture}
%V_1
\draw[dashed] (0.5,0) arc[start angle=0,end angle=360,x radius=0.5,y radius=1];
\shade[ball color=blue!10!white,opacity=0.2] (0,1) arc[start angle=90,end angle=270,x radius=0.5,y radius=1]
arc[start angle=-90,end angle=90,x radius=1,y radius=1];

%V_5
\draw[dashed,opacity=0.25] (4,0) arc[start angle=0,end angle=360,x radius=1,y radius=1];
\shade[ball color=blue!10!white,opacity=0.2] (4,0) arc[start angle=0,end angle=360,x radius=1,y radius=1];

%U_1
\draw[red,thick] (5,-1) arc [start angle=-90,end angle=90,radius=1cm];

%V_2
\draw[dashed] (10,0) arc[start angle=0,end angle=360,x radius=0.5,y radius=1];
\shade[ball color=blue!10!white,opacity=0.2] (9.5,-1) arc[start angle=-90,end angle=90,x radius=0.5,y radius=1]
arc[start angle=90,end angle=270,x radius=1,y radius=1];

%V_5
\draw[dashed,opacity=0.25] (13,0) arc[start angle=0,end angle=360,x radius=1,y radius=1];
\shade[ball color=blue!10!white,opacity=0.2] (13,0) arc[start angle=0,end angle=360,x radius=1,y radius=1];

%U_2
\draw[blue,thick] (15,1) arc [start angle=90,end angle=270,radius=1cm];

%labels
\node at (0,-2) {$V_1$};
\node at (1.5,0) {$\cap$};
\node at (3,-2) {$V_5$};
\node at (4.5,0) {$\overset{\sigma}\longrightarrow$};
\node at (5,-2) {$U_1$};
\node at (9.5,-2) {$V_2$};
\node at (10.5,0) {$\cap$};
\node at (12,-2) {$V_5$};
\node at (13.5,0) {$\overset{\sigma}\longrightarrow$};
\node at (15,-2) {$U_2$};

\end{tikzpicture}
}
\\[1cm]
\resizebox{0.9\textwidth}{!}{
\begin{tikzpicture}
%V_3
\draw[dashed,opacity=0.25] (1.5,0) arc[start angle=0, end angle=180, x radius=1, y radius=0.5];
\draw[dashed] (-0.5,0) arc[start angle=180, end angle=360, x radius=1, y radius=0.5];
\shade[ball color=blue!10!white, opacity=0.2] (1.5,0) arc[start angle=0, end angle=-180, x radius=1, y radius=.5]
arc[start angle=180, end angle=0, x radius=1, y radius=1];

%V_5
\draw[dashed,opacity=0.25] (4.5,0) arc[start angle=0,end angle=360,x radius=1,y radius=1];
\shade[ball color=blue!10!white,opacity=0.2] (4.5,0) arc[start angle=0,end angle=360,x radius=1,y radius=1];

%U_3
\draw[black!30!green,thick] (7.5,-0.5) arc [start angle=0,end angle=180,radius=1cm];

%V_4
\draw[dashed] (11.5,0) arc[start angle=0, end angle=360, x radius=1, y radius=0.5];
\shade[ball color=blue!10!white, opacity=0.2] (11.5,0) arc[start angle=0, end angle=180, x radius=1, y radius=.5]
arc[start angle=180, end angle=360, x radius=1, y radius=1];

%V_5
\draw[dashed,opacity=0.25] (14.5,0) arc[start angle=0,end angle=360,x radius=1,y radius=1];
\shade[ball color=blue!10!white,opacity=0.2] (14.5,0) arc[start angle=0,end angle=360,x radius=1,y radius=1];

%U_4
\draw[brown,thick] (15.5,0.5) arc [start angle=180,end angle=360,radius=1cm];

%labels
\node at (0.5,-2) {$V_3$};
\node at (2,0) {$\cap$};
\node at (3.5,-2) {$V_5$};
\node at (5,0) {$\overset{\sigma}\longrightarrow$};
\node at (6.5,-2) {$U_3$};
\node at (10.5,-2) {$V_4$};
\node at (12,0) {$\cap$};
\node at (13.5,-2) {$V_5$};
\node at (15,0) {$\overset{\sigma}\longrightarrow$};
\node at (16.5,-2) {$U_4$};

\end{tikzpicture}
}
\caption{The map $\sigma$}
\end{figure}
\begin{table}[H]
\begin{center}
\resizebox{\textwidth}{!}{%
\begin{tabular}{c || c | c | c | c | c | c}
$\cap$ & $V_1$ & $V_2$ & $V_3$ & $V_4$ & $V_5$ & $V_6$ \\
\hhline{= || = | = | = | = | = | =}
& & \cellcolor{gray!20!white} & & & \cellcolor{green!10!white} & \\
$V_1$ & $0$ & \cellcolor{gray!20!white} & $0$ & $0$ & \cellcolor{green!10!white} $\frac{1}{2\pi} \times \arcsin(\frac{Y}{\sqrt{X^2+Y^2}})$ & $0$ \\[0.5cm]
\hline
& \cellcolor{gray!20!white} & & & & \cellcolor{green!10!white} & \\
$V_2$ & \cellcolor{gray!20!white} & $0$ & $0$ & $0$ & \cellcolor{green!10!white} $\frac{1}{2\pi} \times \arcsin(- \frac{Y}{\sqrt{X^2+Y^2}}) + \frac{1}{2}$ & $0$ \\[0.5cm]
\hline
& & & & \cellcolor{gray!20!white} & \cellcolor{green!10!white} & \\
$V_3$ & $0$ & $0$ & $0$ & \cellcolor{gray!20!white} & \cellcolor{green!10!white} $\frac{1}{2\pi} \times \arccos(\frac{X}{\sqrt{X^2+Y^2}})$ & $0$ \\[0.5cm]
\hline
& & & \cellcolor{gray!20!white} & & \cellcolor{green!10!white} & \\
$V_4$ & $0$ & $0$ & \cellcolor{gray!20!white} & $0$ & \cellcolor{green!10!white} $\frac{1}{2\pi} \times \arccos(-\frac{X}{\sqrt{X^2+Y^2}}) + \frac{1}{2}$ & $0$ \\[0.5cm]
\hline
& \cellcolor{green!10!white} & \cellcolor{green!10!white} & \cellcolor{green!10!white} & \cellcolor{green!10!white} & & \cellcolor{gray!20!white} \\
$V_5$ & \cellcolor{green!10!white} $-\frac{1}{2\pi} \times \arcsin(\frac{Y}{\sqrt{X^2+Y^2}})$ & \cellcolor{green!10!white} $-\frac{1}{2\pi} \times \arcsin(- \frac{Y}{\sqrt{X^2+Y^2}})-\frac{1}{2}$ & \cellcolor{green!10!white} $-\frac{1}{2\pi} \times \arccos(\frac{X}{\sqrt{X^2+Y^2}})$ & \cellcolor{green!10!white} $-\frac{1}{2\pi} \times \arccos(-\frac{X}{\sqrt{X^2+Y^2}})-\frac{1}{2}$ & $0$ & \cellcolor{gray!20!white} \\[0.5cm]
\hline
& & & & & \cellcolor{gray!20!white} & \\
$V_6$ & $0$ & $0$ & $0$ & $0$ & \cellcolor{gray!20!white} & $0$ \\
& & & & & \cellcolor{gray!20!white} &
\end{tabular}
}
\end{center}
\caption{Smooth maps on double intersections (sources and targets are sorted vertically and horizontally, respectively)}
\end{table}
All remaining smooth maps on double intersections are defined to be zero. We now define the following smooth maps constructed from $\phi_1$, $\kappa_1$, $\sigma$. We specify the integer-valued component arising from Equation \ref{cocycleg} of Definition \ref{gvaluedcocycle}, together with the second component always zero. In particular,
\[
\beta(h_{ijk}) = g_{ik} - g_{ij} - g_{jk}.
\]
We define:
\begin{align*}
h_{1 3 5} & : V_1 \cap V_3 \cap V_5 \longrightarrow \mathbb{Z} \times \mathbb{R}/\mathbb{Z}, ~~ (X,Y,Z) \mapsto \left(0,\left[0\right]\right) \\
h_{1 4 5} & : V_1 \cap V_4 \cap V_5 \longrightarrow \mathbb{Z} \times \mathbb{R}/\mathbb{Z}, ~~ (X,Y,Z) \mapsto \left(-1,\left[0\right]\right) \\
h_{2 3 5} & : V_2 \cap V_3 \cap V_5 \longrightarrow \mathbb{Z} \times \mathbb{R}/\mathbb{Z}, ~~ (X,Y,Z) \mapsto \left(0,\left[0\right]\right) \\
h_{2 4 5} & : V_2 \cap V_4 \cap V_5 \longrightarrow\mathbb{Z} \times \mathbb{R}/\mathbb{Z}, ~~ (X,Y,Z) \mapsto \left(0,\left[0\right]\right) \\
h_{3 1 5} & : V_3 \cap V_1 \cap V_5 \longrightarrow \mathbb{Z} \times \mathbb{R}/\mathbb{Z}, ~~ (X,Y,Z) \mapsto \left(0,\left[0\right]\right) \\
h_{3 2 5} & : V_3 \cap V_2 \cap V_5 \longrightarrow \mathbb{Z} \times \mathbb{R}/\mathbb{Z}, ~~ (X,Y,Z) \mapsto \left(0,\left[0\right]\right) \\
h_{4 1 5} & : V_4 \cap V_1 \cap V_5 \longrightarrow \mathbb{Z} \times \mathbb{R}/\mathbb{Z}, ~~ (X,Y,Z) \mapsto \left(1,\left[0\right]\right) \\
h_{4 2 5} & : V_4 \cap V_2 \cap V_5 \longrightarrow \mathbb{Z} \times \mathbb{R}/\mathbb{Z}, ~~ (X,Y,Z) \mapsto \left(0,\left[0\right]\right)
\end{align*}
From Equations (\ref{gii}), (\ref{cocycleg}), (\ref{hijj}) and (\ref{cocycleh}), we obtain the following pointwise equalities:
\begin{equation}\label{h231h123}
h_{3 5 1} = h_{1 3 5}.
\end{equation}
\begin{equation}\label{h312h321}
h_{5 1 3} = - h_{5 3 1}.
\end{equation}
As Equation (\ref{h231h123}) shows, placing the subscript $5$ in the middle and swapping the remaining two subscripts produces smooth maps that are equal pointwise in general. Similarly, Equation (\ref{h312h321}) indicates that positioning the subscript $5$ first and interchanging the other two subscripts yields two smooth maps that are pointwise inverses to each other. Hence, by defining the smooth maps
\begin{align*}
h_{5 1 3} & : V_5 \cap V_1 \cap V_3 \longrightarrow \mathbb{Z} \times \mathbb{R}/\mathbb{Z}, ~~ (X,Y,Z) \mapsto \left(0,[0]\right) \\
h_{5 1 4} & : V_5 \cap V_1 \cap V_4 \longrightarrow \mathbb{Z} \times \mathbb{R}/\mathbb{Z}, ~~ (X,Y,Z) \mapsto \left(-1,[0]\right) \\
h_{5 2 3} & : V_5 \cap V_2 \cap V_3 \longrightarrow \mathbb{Z} \times \mathbb{R}/\mathbb{Z}, ~~ (X,Y,Z) \mapsto \left(0,[0]\right) \\
h_{5 2 4} & : V_5 \cap V_2 \cap V_4 \longrightarrow \mathbb{Z} \times \mathbb{R}/\mathbb{Z}, ~~ (X,Y,Z) \mapsto \left(0,[0]\right)
\end{align*}
and defining all other remaining smooth maps on triple intersections to be zero maps, we have defined all the maps required for constructing our principal $\mathcal{U}(1)$-bundle $\rho:\mathcal{P} \longrightarrow S^2$. We note that all maps on triple intersections have already been specified in the preceding discussion. In particular, non-trivial smooth maps on triple intersections are defined only when one of the indices is $5$ and the other two indices are distinct; in all other cases, the maps are taken to be zero. Finally, we claim that the pair $(g_{ij},h_{ijk})$ defines a cocycle for a $\mathcal{U}(1)$-bundle over $S^2$. We verify conditions (\ref{gii}), (\ref{cocycleg}), (\ref{hijj}), and (\ref{cocycleh}) from Definition \ref{gvaluedcocycle} in some cases; the remaining cases follow similarly.
\begin{itemize}
\item Let $i=1$, $j=4$, $k=5$, and $l=4$. Condition (\ref{gii}) is automatically satisfies by the definition of the $g_{ij}$'s. We also have:
\begin{align*}
\beta\Big(h_{145}(X,Y,Z)\Big) & = -1, \\
g_{14}(X,Y,Z) & = 0, \\
g_{45}(X,Y,Z) & = \frac{\arccos(-\frac{X}{\sqrt{X^2+Y^2}})}{2\pi} + \frac{1}{2}, \\
g_{15}(X,Y,Z) & = \frac{\arcsin(\frac{Y}{\sqrt{X^2+Y^2}})}{2\pi}.
\end{align*}
Considering
\begin{align*}
\cos(\theta) & = \frac{X}{\sqrt{X^2+Y^2}}, \\
\sin(\theta) & = \frac{Y}{\sqrt{X^2+Y^2}},
\end{align*}
since the angle $\theta$ lies in the fourth quadrant, it follows that:
\begin{align*}
\arccos(-\frac{X}{\sqrt{X^2+Y^2}}) & = \theta - \pi, \\
\arcsin(\frac{Y}{\sqrt{X^2+Y^2}}) & = \theta - 2\pi.
\end{align*}
Therefore, we obtain the equality: \\
\[
-1 + 0 + \frac{\theta - \pi}{2\pi} + \frac{1}{2} = \frac{\theta - 2\pi}{2\pi},
\]
which verifies condition (\ref{cocycleg}). Condition (\ref{hijj}) is satisfied identically because of the way we defined $h_{ijk}$'s. \\
Condition (\ref{cocycleh}) is likewise satisfied as
\[
h_{145}(X,Y,Z) = - h_{154}(X,Y,Z),
\]
and $h_{454(X,Y,Z)} = (0,[0])$.
\item Let $i=1$, $j=3$, $k=5$, and $l=3$. Condition (\ref{gii}) is automatically satisfies by the definition of the $g_{ij}$'s. We also have:
\begin{align*}
\beta\Big(h_{135}(X,Y,Z)\Big) & = 0, \\
g_{13}(X,Y,Z) & = 0, \\
g_{35}(X,Y,Z) & = \frac{\arccos(\frac{X}{\sqrt{X^2+Y^2}})}{2\pi}, \\
g_{15}(X,Y,Z) & = \frac{\arcsin(\frac{Y}{\sqrt{X^2+Y^2}})}{2\pi}.
\end{align*}
Considering
\begin{align*}
\cos(\theta) & = \frac{X}{\sqrt{X^2+Y^2}}, \\
\sin(\theta) & = \frac{Y}{\sqrt{X^2+Y^2}},
\end{align*}
since the angle $\theta$ lies in the first quadrant, it follows that:
\begin{align*}
\arccos(\frac{X}{\sqrt{X^2+Y^2}}) & = \theta, \\
\arcsin(\frac{Y}{\sqrt{X^2+Y^2}}) & = \theta.
\end{align*}
Therefore, we obtain the equality: \\
\[
0 + 0 +  \theta = \theta,
\]
which verifies condition (\ref{cocycleg}). Condition (\ref{hijj}) is satisfied because of the way we defined $h_{ijk}$'s.
Condition (\ref{cocycleh}) is satisfied, since
\begin{align*}
h_{135}(X,Y,Z) & = \left(0,[0]\right), \qquad h_{153}(X,Y,Z) = \left(0,[0]\right), \\
h_{133}(X,Y,Z) & = \left(0,[0]\right), \qquad h_{353}(X,Y,Z) = \left(0,[0]\right).
\end{align*}
\end{itemize}
We define the total $2$-space $\mathcal{P}$ as follows: \\
(\rNum{1}) The space of objects of $\mathcal{P}$ is
\[
\mathsf{Obj}(\mathcal{P}) = (V_1 \times \mathbb{R}) \sqcup (V_2 \times \mathbb{R}) \sqcup (V_3 \times \mathbb{R}) \sqcup (V_4 \times \mathbb{R}) \sqcup (V_5 \times \mathbb{R}) \sqcup (V_6 \times \mathbb{R}).
\]
(\rNum{2}) The space of morphisms of $\mathcal{P}$ is 
\[
\mathsf{Mor}(\mathcal{P}) = \sqcup_{i,j} \Bigl((V_i \cap V_j) \times (\mathbb{Z} \times \mathbb{R}/\mathbb{Z}) \times \mathbb{R}\Bigr).
\]
For instance, $(V_1 \cap V_3) \times (\mathbb{Z} \times \mathbb{R}/\mathbb{Z}) \times \mathbb{R}$ forms a component of $\mathsf{Mor}(\mathcal{P})$. \\
(\rNum{3}) Take a morphism $\Bigl(v,(m,[a]),r\Bigr)$ in $(V_1 \cap V_3) \times (\mathbb{Z} \times \mathbb{R}/\mathbb{Z}) \times \mathbb{R}$. Its source and target are defined by:
\begin{align*}
\mathsf{s}\Bigl(v,(m,[a]),r\Bigr) & = (v,r) \in V_1 \times \mathbb{R}, \\
\mathsf{t}\Bigl(v,(m,[a]),r\Bigr) & = \biggl(v,r+\beta\Bigl((m,[a])\Bigr)+g_{1 3}^{-1}(v)\biggr) \in V_3 \times \mathbb{R}.
\end{align*}
Here we use that $\beta\Bigl((m,[a])\Bigr) = m$, and the relation
\begin{equation*}
\beta\bigl(h_{1 3 1}\bigl) + g_{1 3} + g_{3 1} = g_{1 1}.
\end{equation*}
ensures the simplification. \\
(\rNum{4}) Consider two composable arrows:
\[
\begin{tikzcd}[row sep=huge,column sep=huge]
(v,r) \arrow[r,"{\big(v,(m,[a]),r\big)}"] & (v,r+m)
\end{tikzcd}
\]
in $(V_1 \cap V_3) \times \mathbb{R} \times (\mathbb{Z} \times \mathbb{R}/\mathbb{Z})$, and
\[
\begin{tikzcd}[row sep=huge,column sep=huge]
\big(v,r+m\big) \arrow[rr,"{\big(v,(n,[b]),r+m+g^{-1}_{35}(v)\big)}"] & & \big(v,r+m+n+g^{-1}_{3 5}(v)\big)
\end{tikzcd}
\]
in $(V_3 \cap V_5) \times (\mathbb{Z} \times \mathbb{R}/\mathbb{Z}) \times \mathbb{R}$.
Using the relation
\begin{equation*}
\beta\bigl(h_{3 5 3}\bigl) + g_{3 5} + g_{5 3} = g_{3 3},
\end{equation*}
we identify $g^{-1}_{35} = g_{53}$. \\
The composition of the two arrows is then defined by:
\[
\begin{tikzcd}[row sep=huge,column sep=huge]
\big(v,r\big) \arrow[rrrr,"{\Bigl(v,(m,[a])+\alpha\bigl((n,[b]),g_{1 3}(v)\bigr)+h_{1 3 5}(v),r\Bigr)}"] & & & & \Biggl(v,r+m+g^{-1}_{1 5}(v)+\beta\biggl(\alpha\Bigl((n,[b]),g_{1 3}(v)\Bigl)+h_{1 3 5}(v)\biggr)\Biggr)
\end{tikzcd}
\]
Simplifying, we obtain
\[
\begin{tikzcd}[row sep=huge,column sep=huge]
\big(v,r\big) \arrow[rrr,"{\Bigl(v,(m+n,[a+b])+h_{1 3 5}(v),r\Bigr)}"] & & & \Biggl(v,r+m+n+g^{-1}_{1 5}(v)\Biggr)
\end{tikzcd}
\]
which lies in $(V_1 \cap V_5) \times (\mathbb{Z} \times \mathbb{R}/\mathbb{Z}) \times \mathbb{R}$. \\
We have defined sources, targets and compositions of arrows in examples, but the general formula would follow the similar approach \cite[Remark 2.16 on page 583]{MR2805195}. \\
The map $\rho:\mathcal{P} \longrightarrow S^2$ is defined by projection: \\
$\bullet$ on objects: $\rho(v,r) = v$ \\
$\bullet$ on morphisms: $\rho\big(v,(m,[a]),r\big) = v$. \\
The action of $\mathcal{U}(1)$ on $\mathcal{P}$ is given by
$$\mathcal{P} \times \mathcal{U}(1) \longrightarrow \mathcal{P}$$
$\bullet$ on objects: $(v,r) \cdot r^\prime \coloneq (v,r + r^\prime)$ \\
$\bullet$ on morphisms: $\big(v,(m,[a]),r\big) \cdot \big((n,[b]),r^\prime\big) \coloneq \big(v,(m+n,[a+b+rn]),r+r^\prime\big)$. \\
We now define local trivialisations $\phi_{i}:\mathcal{P} \big\vert_{V_i} \longrightarrow V_i \times \mathcal{G}$ and their weak inverses $\bar{\phi}_{i}:V_i \times \mathcal{G} \longrightarrow \mathcal{P} \big\vert_{V_i}$. Let $\bar{\phi}_{i}$ be the inclusion, and define $\phi_{i}$ by: \\
(\rNum{1}) on an object $(v,r) \in V_j \times \mathbb{R}$:
\[
\phi_{i}(v,r) = \big(v,r+g_{i j}(v)\big)
\]
(\rNum{2}) on a morphism $(V_j \cap V_k) \times (\mathbb{Z} \times \mathbb{R}/\mathbb{Z}) \times \mathbb{R}$:
\[\phi_{i}\Bigl(v,(m,[a]),r\Bigr) = \biggl(v,\alpha\Bigl((m,[a]),g_{i j}(v)\Bigr)+h_{i j k}(v),r+g_{i j}(v)\biggr)
\]
These maps make the following diagram commute:
\[
\begin{tikzcd}[row sep=huge,column sep=huge]
V_i \times \mathcal{G} \arrow[r,"\bar{\phi}_{i}"] \arrow[dr,"pr_1"'] & \mathcal{P} \big\vert_{V_i} \arrow[r,"\phi_{i}"] \arrow[d,"\rho"'] & V_i \times \mathcal{G} \arrow[dl,"pr_1"] \\
& V_i &
\end{tikzcd}
\]
Thus, we have constructed a principal $\mathcal{U}(1)$-bundle $\rho:\mathcal{P} \longrightarrow S^2$. The map $\rho$ will be referred to as the \textbf{categorical Hopf map}.
\end{const}
\begin{lem}\label{priGtoG'}
Let $F:\mathcal{G} \longrightarrow \mathcal{G}^\prime$ be a map of strict categorical groups, and let $\widetilde{F}:(G,H,\alpha,\beta) \longrightarrow (G^\prime,H^\prime,\alpha^\prime,\beta^\prime)$ denote the induced map between the crossed modules associated to $\mathcal{G}$ and $\mathcal{G}^\prime$. Let $X$ be a topological space, and let $(W_i,g_{ij},h_{ijk})$ be the $\mathcal{G}$-valued cocycle associated with a principal $\mathcal{G}$-bundle over $X$. Then, composing $$g_{ij}:W_i \cap W_j \longrightarrow G ~~~ \text{with} ~~~ \widetilde{F}_0:G \longrightarrow G^\prime,$$ and $$h_{ijk}:W_i \cap W_j \cap W_k \longrightarrow H ~~~ \text{with} ~~~ \widetilde{F}_1:H \longrightarrow H^\prime,$$ yields a principal $\mathcal{G}^\prime$-bundle over $X$.
\end{lem}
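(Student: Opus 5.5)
The plan is to reduce the statement to a pointwise verification that the composed data $(W_i,\widetilde{F}_0\circ g_{ij},\widetilde{F}_1\circ h_{ijk})$ is again a cocycle in the sense of Definition \ref{gvaluedcocycle}, now with values in $\mathcal{G}^\prime$. Once that is done, Remark \ref{objandmorcatbun} produces from this cocycle a principal $\mathcal{G}^\prime$-bundle over $X$, and this gives the claim.

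First I will make precise what the induced map $\widetilde{F}$ is, via Remark \ref{2grpcross}. A strict monoidal functor $F$ gives a group homomorphism $\widetilde{F}_0=F_0:G\to G^\prime$ on objects. Since $F$ preserves sources, $F_1$ sends $\ker \mathsf{s}$ into $\ker \mathsf{s}^\prime$, and since $F$ is monoidal on morphisms, the restriction $\widetilde{F}_1:H\to H^\prime$ is a group homomorphism. These maps satisfy two compatibilities:
\begin{align*}
\beta^\prime\circ\widetilde{F}_1 &= \widetilde{F}_0\circ\beta, \\
\widetilde{F}_1\bigl(\alpha(g,h)\bigr) &= \alpha^\prime\bigl(\widetilde{F}_0(g),\widetilde{F}_1(h)\bigr).
\end{align*}
The first holds because $F$ preserves targets. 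The second holds because $F$ preserves tensor products and identities, and $\alpha(g,h)=id_g\otimes h\otimes id_{g^{-1}}$. Continuity, or smoothness in the smooth setting, of $\widetilde{F}_0$ and $\widetilde{F}_1$ comes from that of $F_0$ and $F_1$, so the composites remain continuous (respectively smooth).

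Next I check the four conditions one at a time, writing $g^\prime_{ij}=\widetilde{F}_0\circ g_{ij}$ and $h^\prime_{ijk}=\widetilde{F}_1\circ h_{ijk}$.
\begin{itemize}
\item Conditions (\ref{gii}) and (\ref{hijj}) follow because group homomorphisms preserve units.
\item For condition (\ref{cocycleg}), apply $\widetilde{F}_0$ to $\beta(h_{ijk})g_{ij}g_{jk}=g_{ik}$. Using multiplicativity and $\beta^\prime\circ\widetilde{F}_1=\widetilde{F}_0\circ\beta$, this gives $\beta^\prime(h^\prime_{ijk})g^\prime_{ij}g^\prime_{jk}=g^\prime_{ik}$.
\item For condition (\ref{cocycleh}), apply $\widetilde{F}_1$ to both sides and use the equivariance identity to rewrite $\widetilde{F}_1(\alpha(g_{ij},h_{jkl}))$ as $\alpha^\prime(g^\prime_{ij},h^\prime_{jkl})$.
\end{itemize}

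The main step needing care is the first one: justifying that $\widetilde{F}$ really is a morphism of crossed modules, in particular the equivariance identity. I will derive it directly from the formula $\alpha(g,h)=id_g\otimes h\otimes id_{g^{-1}}$ together with strict monoidality of $F$, which gives $F(id_g)=id_{F(g)}$ and $F(g^{-1})=F(g)^{-1}$. The remaining verifications are formal. As an optional consistency check, the resulting bundle carries the induced action via $F$ on the morphism components of Remark \ref{objandmorcatbun}, and its local trivialisations are the images of the original ones. Strictly speaking this is not needed, since the construction in Remark \ref{objandmorcatbun} already yields a principal bundle from any cocycle.
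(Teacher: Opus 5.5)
Your proposal is correct and follows essentially the same route as the paper. The paper also sets $g^\prime_{ij}=\widetilde{F}_0\circ g_{ij}$ and $h^\prime_{ijk}=\widetilde{F}_1\circ h_{ijk}$, checks the four conditions of Definition \ref{gvaluedcocycle} by applying $\widetilde{F}_0$ and $\widetilde{F}_1$ and using that $\widetilde{F}$ is a crossed-module morphism, and implicitly builds the bundle as in Remark \ref{objandmorcatbun}. The difference is that you derive $\beta^\prime\circ\widetilde{F}_1=\widetilde{F}_0\circ\beta$ and the equivariance of $\alpha$ from strict monoidality of $F$, and you note that the composites stay smooth; the paper takes both points for granted.
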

\begin{proof}
Define $$g^\prime_{ij} = \widetilde{F}_0 \circ g_{ij}, ~~~~~ h^\prime_{ijk} = \widetilde{F}_1 \circ h_{ijk}.$$
We must verify that these data satisfy the cocycle conditions for a $\mathcal{G}^\prime$-bundle:
\begin{align*}
g^\prime_{ii} & = e_{G^\prime} \\
\beta\bigl(h^\prime_{ijk}\bigr) \cdot g^\prime_{ij} \cdot g^\prime_{jk} & = g^\prime_{ik} \\
h^\prime_{ijj} = h^\prime_{jji} & = e_{H^\prime} \\
h^\prime_{ijk} \cdot h^\prime_{ikl} & = \alpha(g^\prime_{ij},h^\prime_{jkl}) \cdot h^\prime_{ijl}
\end{align*}
Since $\widetilde{F}$ is a crossed module homomorphism, these relations follow directly by applying $\widetilde{F}_0$ to Equations (\ref{gii}), (\ref{cocycleg}) and $\widetilde{F}_1$ to Equations (\ref{hijj}), (\ref{cocycleh}). Hence the claim follows.
\end{proof}
\begin{lem}\label{primathGtoG}
Let $G$ be a group, viewed as a strict categorical group $\mathcal{G}$ with objects as $G$ and only identity morphisms. Let $X$ be a topological space with an open cover $\left\{W_i\right\}_{i \in I}$. Then, constructing a principal $\mathcal{G}$-bundle over $X$ yields a principal $G$-bundle over $X$.
\end{lem}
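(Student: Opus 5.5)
The plan is to run the construction of Remark \ref{objandmorcatbun} in the special case where $\mathcal{G}$ is the discrete categorical group on $G$, and observe that every categorical ingredient collapses to its classical counterpart. By Remark \ref{2grpcross}, the crossed module associated to $\mathcal{G}$ is $(G,\{\mathbbm{1}\},\alpha,\beta)$ with $\alpha$ and $\beta$ trivial, since the kernel of the source map on $\mathsf{Mor}(\mathcal{G})=G$ is trivial. Consequently, a $\mathcal{G}$-valued cocycle $(W_i,g_{ij},h_{ijk})$ in the sense of Definition \ref{gvaluedcocycle} has $h_{ijk}\equiv \mathbbm{1}$. Conditions (\ref{hijj}) and (\ref{cocycleh}) then hold vacuously. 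Condition (\ref{cocycleg}) reduces to $g_{ij}\cdot g_{jk}=g_{ik}$ and (\ref{gii}) to $g_{ii}=e$. These are exactly the transition-function conditions for an ordinary principal $G$-bundle, so $(W_i,g_{ij})$ is a classical $G$-valued \v{C}ech cocycle.

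Next I will compare the total categorical space with the classical clutching construction. By Remark \ref{objandmorcatbun}, $\mathsf{Obj}(\mathcal{P})=\sqcup_i W_i\times G$ and $\mathsf{Mor}(\mathcal{P})=\sqcup_{i,j}(W_i\cap W_j)\times G$. A morphism $(w,\mathbbm{1},g)$ in the $(i,j)$ component runs from $(i,w,g)$ to $(j,w,g_{ij}(w)^{-1}g)$, and composition is governed by the cocycle identity. Thus $\mathcal{P}$ is an equivalence relation, namely the \v{C}ech-type groupoid identifying $(i,w,g)\sim(j,w,g_{ij}(w)^{-1}g)$. I take the principal $G$-bundle to be the orbit space $P=\mathsf{Obj}(\mathcal{P})/\mathsf{Mor}(\mathcal{P})$. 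Up to the convention of $g_{ij}$ versus $g_{ij}^{-1}$, this is precisely the standard gluing of local trivialisations used in the Background (following \cite[on page 583]{MR2805195}). The projection $\rho_0$ descends to $\pi:P\to X$, and the right action $(w,g)\cdot g'=(w,gg')$ from Remark \ref{objandmorcatbun}(ii) commutes with the identifications because the gluing is by left multiplication. Therefore it descends to a right $G$-action on $P$.

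The main step that needs care is local triviality and freeness-transitivity on fibres. I will use the local trivialisations $\phi_i$ of Remark \ref{objandmorcatbun}(iii), which on objects send $(j,w,g)\mapsto(w,g_{ij}(w)g)$. These functors are constant on equivalence classes (by the cocycle identity $g_{ij}g_{jk}=g_{ik}$), so they descend to maps $\pi^{-1}(W_i)\to W_i\times G$. The inclusions $\bar\phi_i$ descend to their inverses, because the weak-equivalence natural transformations become equalities once all morphisms are identified. This gives $G$-equivariant homeomorphisms over $W_i$. The point I expect to be the only genuine obstacle is checking that the quotient topology on $P$ is well behaved, i.e.\ that $\pi^{-1}(W_i)$ is homeomorphic to $W_i\times G$ and not merely in bijection with it. I will handle this by noting that each $W_i\times G\to P$ is an open embedding: the saturation of an open set is a union of translates of open sets by the continuous maps $g_{ij}$.

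Finally, I will remark that conversely the functor $\mathcal{P}\to P$ (viewing $P$ as a discrete categorical space) is an equivalence. So the constructed $\mathcal{G}$-bundle and the classical $G$-bundle determine each other, which justifies the phrase ``yields'' in the statement.
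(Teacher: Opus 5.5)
Your proposal is correct and follows the same route as the paper. Both identify the crossed module as $(G,\{\mathbbm{1}\},\alpha,\beta)$ with trivial $\alpha,\beta$, note that the cocycle conditions collapse to $g_{ij}g_{jk}=g_{ik}$, and read off from the morphisms $(w,\mathbbm{1},g)$ of the total categorical space exactly the gluing $g=g_{ij}(w)\,g'$ of a classical principal $G$-bundle. Your quotient, local-triviality and topology checks make explicit what the paper leaves implicit.

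One small caveat concerns your closing remark. $\mathcal{P}\to P$ is only a weak (Morita) equivalence, not an equivalence with a continuous weak inverse functor in the sense of the paper's definition. The reason is that $\sqcup_i W_i\times G\to P$ generally admits no global continuous section; for example, none exists for the Hopf bundle with the six-set cover.
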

\begin{proof}
Since both the group of objects and morphisms of $\mathcal{G}$ are identified with $G$, the crossed module associated with $\mathcal{G}$ is given by $(G,\left\{\mathbbm{1}\right\},\alpha,\beta)$, where the homomorphism $\alpha$ and $\beta$ are taken to be the trivial ones. Consider the smooth maps
\[
g_{ij}:W_i \cap W_j \longrightarrow G, ~~~~~
h_{ijk}:W_i \cap W_j \cap W_k \longrightarrow \left\{\mathbbm{1}\right\},
\]
satisfying cocycle conditions (\ref{gii}), (\ref{cocycleg}),  (\ref{hijj}) and \ref{cocycleh} in Definition \ref{gvaluedcocycle}. \\
We construct the total categorical space $\mathcal{Q}$ as follows: \\
(\rNum{1}) Objects:
\[
\mathsf{Obj}(\mathcal{Q}) = \sqcup_{i} \bigl(W_i \times G\bigr),
\]
(\rNum{2}) Morphisms:
\[
\mathsf{Mor}(\mathcal{Q}) = \sqcup_{i,j} \Bigl((W_i \cap W_j) \times \left\{\mathbbm{1}\right\} \times G\Bigr).
\]
For an arrow
\[
\begin{tikzcd}
(w,g) \arrow[rr,"{\bigl(w,\mathbbm{1},g\bigr)}"] & & (w,g^\prime)
\end{tikzcd}
\]
in $(W_i \times W_j) \times \left\{\mathbbm{1}\right\} \times G$, we have the relation $$g^\prime = g^{-1}_{ij}(w) \cdot g.$$
Equivalently, $$g = g_{ij} \cdot g^\prime,$$
which matches precisely the identification of objects in the total space of a principal $G$-bundle. Since the maps $g_{ij}$ satisfy the cocycle condition
\[
g_{ij} \cdot g_{jk} = g_{ik},
\]
the collection $(W_i,g_{ij})$ defines transition functions for a principal $G$-bundle over $X$.
\end{proof}
\begin{defn}\label{curlcirtocir}
Define $\widetilde{F}_0:\mathbb{R} \longrightarrow S^1$ and $\widetilde{F}_1:\mathbb{Z} \times \mathbb{R}/\mathbb{Z} \longrightarrow \left\{\mathbbm{1}\right\}$ to be given by
\begin{align*}
\widetilde{F}_0 & : r \mapsto q_0(r) = \bigl(\cos(2\pi r),\sin(2\pi r)\bigr) \\
\widetilde{F}_1 & : \bigl(m,[a]\bigr) \mapsto \left\{\mathbbm{1}\right\}.
\end{align*}
\end{defn}
\begin{prop}\label{giveHopf}
Take the cover of $S^2$ as in Construction \ref{cathopf}, and let $g_{ij}$ and $h_{ijk}$ be the smooth maps defined therein. Then $$g^\prime_{ij} = \widetilde{F}_0 \circ g_{ij}, ~~~~~  h^\prime_{ijk} = \widetilde{F}_1 \circ h_{ijk},$$
determines the transition data of the Hopf map $\eta:S^3 \longrightarrow S^2$.
\end{prop}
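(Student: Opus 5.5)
The plan is to combine Lemma \ref{priGtoG'} with Lemma \ref{primathGtoG}. Then I identify the resulting $U(1)$-valued \v{C}ech cocycle with the standard clutching data of the Hopf map by comparing degrees.

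First I check that the pair $(\widetilde{F}_0,\widetilde{F}_1)$ of Definition \ref{curlcirtocir} is a map of crossed modules from $(\mathbb{R},\mathbb{Z}\times\mathbb{R}/\mathbb{Z},\alpha,\beta)$ to $(S^1,\{\mathbbm{1}\},\text{triv},\text{triv})$. The map $\widetilde{F}_0=q_0$ is a homomorphism $\mathbb{R}\to S^1$. Compatibility with $\beta$ means $q_0(\beta(m,[a]))=q_0(m)=1$, which holds because $\ker q_0=\mathbb{Z}$. Compatibility with the actions is automatic, since the target group $H^\prime$ is trivial. Lemma \ref{priGtoG'} then shows that $(V_i,g^\prime_{ij},h^\prime_{ijk})$ is a cocycle for the categorical group $S^1$ with only identity morphisms. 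Lemma \ref{primathGtoG} turns it into an ordinary principal $U(1)$-bundle with transition functions $g^\prime_{ij}=e^{2\pi\mathsf{i} g_{ij}}$. Concretely, this is the genuine cocycle identity $g^\prime_{ij}g^\prime_{jk}=g^\prime_{ik}$, obtained by exponentiating (\ref{cocycleg}): the integer $\beta(h_{ijk})$ disappears.

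Next I identify the bundle. Principal $U(1)$-bundles on $S^2$ are classified by $H^2(S^2;\mathbb{Z})\cong\mathbb{Z}$, so it suffices to show that this cocycle represents the generator $1$, the class of $\eta$ fixed in Section 2. The only nonzero transition functions are $g^\prime_{i5}$ for $i\le 4$ and their inverses $g^\prime_{5i}$. By construction of $\kappa_0$, each $g^\prime_{i5}$ equals $q_0\circ\kappa_0\circ\sigma$. Since $q\circ\kappa=p$ in diagram (\ref{kappacomm}), this composite is just $\sigma$ itself, the azimuthal angle $(X,Y,Z)\mapsto e^{\mathsf{i}\theta}$. So on $V_5\cap(V_1\cup\dots\cup V_4)$ the cocycle is the single function $\sigma$, and elsewhere it is trivial.

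I then refine to the two-set cover. Every $V_i$ with $i\neq 5$ meets $V_6$ with trivial transition, so these sets glue into one trivialisation over $S^2\setminus\{(0,0,1)\}$. Together with $V_5$, this leaves a single clutching function on $V_5\cap(S^2\setminus\{N\})\simeq S^1$, namely $\sigma$, whose winding number is $\pm1$. I must check that the gluing is consistent, i.e. that the $g^\prime_{ij}$ among $V_1,\dots,V_4,V_6$ are identically $1$. This holds by the table in Construction \ref{cathopf}, where all such entries are $0$.

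Finally I compare with item 2) of Section 2. There the Hopf bundle has clutching function $[z_1:z_2]\mapsto \frac{z_1/z_2}{|z_1/z_2|}$ on $V_N\cap V_S$, which under $f$ is also the azimuthal angle. The main obstacle is pinning down the sign. I must verify that the orientation conventions (which chart is the source, $g_{i5}$ versus $g_{5i}$, and $f_1$ being projection from the north pole) make the winding number $+1$ rather than $-1$. I would settle this by evaluating both clutching functions along the equator $Z=0$ in the same direction, matching the ordering of the cover. If the signs disagree, the statement still holds up to the orientation-reversing identification $\eta\mapsto\bar\eta$, and I would note this explicitly.
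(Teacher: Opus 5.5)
Your route is sound but differs from the paper's. After the same two lemmas, the paper reduces the claim to showing that the total categorical space $\mathcal{Q}$ is equivalent to $S^3$. It does this by asserting a ``trivial'' semi-strict bundle morphism from the two-chart model $\mathcal{Q}^\prime$ of $\eta$ and citing Wockel's Theorem 2.22. You instead use the classification of $U(1)$-bundles on $S^2$ by $H^2(S^2;\mathbb{Z})$ and compute the class from an explicit clutching function, which is a real computation where the paper only asserts a morphism.

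Your intermediate steps are correct:
\begin{itemize}
\item $(\widetilde{F}_0,\widetilde{F}_1)$ is a crossed-module map.
\item $g^\prime_{i5}=q_0\circ\kappa_0\circ\sigma=e^{\mathsf{i}\theta}$ for $i\le 4$, by $q\circ\kappa=p$.
\item $V_1,\dots,V_4,V_6$ glue to a single chart $W=S^2\setminus\{(0,0,1)\}$.
\end{itemize}
Keep your remark that only the exponentiated (\ref{cocycleg}) is used. It makes your argument independent of (\ref{cocycleh}) for the $\mathcal{U}(1)$-data. With all $\mathbb{R}/\mathbb{Z}$-components set to zero, that condition actually fails: at $(i,j,k,l)=(5,1,4,5)$ it would force $[g_{15}]=[0]$ on $V_1\cap V_4\cap V_5$.

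The sign check you left open is decisive, and it comes out against the literal statement.
\begin{itemize}
\item \emph{The cocycle.} In the convention of Remark \ref{objandmorcatbun} and Lemma \ref{primathGtoG}, fibre coordinates satisfy $z_i=g^\prime_{ij}z_j$. So $g^\prime_{i5}=e^{\mathsf{i}\theta}$ gives $z_5=e^{-\mathsf{i}\theta}z_W$ on $V_5\cap W$.
\item \emph{The Hopf map.} Take $V_N=\{z_1\neq 0\}\ni(0,0,1)$ and $V_S=\{z_2\neq 0\}$, with the equivariant trivialisations $z_1/\lvert z_1\rvert$ and $z_2/\lvert z_2\rvert$. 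Then $z_N=\frac{z_1/z_2}{\lvert z_1/z_2\rvert}\,z_S=e^{\mathsf{i}\theta}z_S$, because $z_1/z_2=(X+\mathsf{i}Y)/(1-Z)$ under the paper's $f$.
\item \emph{Real formula.} Using $\eta(q)=q\mathsf{k}\bar q$ with $U(1)$ acting by right multiplication by $\cos t+\sin t\,\mathsf{k}$ gives the same sign. With $s_N(v)=\cos\frac{\phi}{2}+\sin\frac{\phi}{2}(-\sin\theta\,\mathsf{i}+\cos\theta\,\mathsf{j})$ one finds $s_S=s_N\cdot(\cos\theta+\sin\theta\,\mathsf{k})$.
\end{itemize}
The winding number of the clutching function from the polar chart to its complement is invariant under isomorphism over $\mathrm{id}_{S^2}$. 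Hence the $g^\prime_{ij}$ are the transition data of the conjugate bundle $\bar\eta$, equivalently of $r^\ast\eta$ for $r(X,Y,Z)=(X,-Y,Z)$. This has class $-1$ in the paper's normalisation.

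So your fallback clause is not optional. The proposition holds only up to conjugating the $U(1)$-action, or on the nose after replacing $g_{ij}$ by $-g_{ij}$ in Construction \ref{cathopf}. That weaker statement, total space $S^3$, is all the paper's proof actually establishes, since its criterion cannot see the sign. Indeed, with these conventions no $U(1)$-equivariant morphism $\mathcal{Q}^\prime\to\mathcal{Q}$ over $\mathrm{id}_{S^2}$, as the paper asserts, can exist.
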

\begin{proof}
By Lemmas \ref{priGtoG'} and \ref{primathGtoG}, the maps $g^\prime_{ij}$ and $h^\prime_{ijk}$ provide the cocycle data for a principal $U(1)$-bundle over $S^2$. It therefore suffices to show that the associated total categorical space $\mathcal{Q}$, obtained using $g^\prime_{ij}$ and $h^\prime_{ijk}$, is equivalent to $S^3$. The categorical space $\mathcal{Q}$ is given by: \\
(\rNum{1}) objects:
\[
\mathsf{Obj}(\mathcal{Q}) = \sqcup_{i} (V_i \times S^1)
\]
(\rNum{2}) morphisms:
\[
\mathsf{Mor}(\mathcal{Q}) = \sqcup_{i,j} \bigl((V_i \cap V_j) \times \left\{\mathbbm{1}\right\} \times S^1\bigr)
\]
Now, if we reconstruct the Hopf map $\eta$ by regarding $U(1)$ as a strict categorical group, we obtain a categorical bundle
$$\widetilde{\eta}:\mathcal{Q}^\prime \longrightarrow S^2.$$
Take the cover $\left\{V_N,V_S\right\}$ of $S^2$, where
\begin{align*}
V_N & = S^2 \setminus \left\{Z_N = (0,0,-1)\right\}, \\
V_S & = S^2 \setminus \left\{Z_S = (0,0,1)\right\}.
\end{align*}
Then
\begin{align*}
\mathsf{Obj}(\mathcal{Q}^\prime) & = (V_N \times S^1) \sqcup (V_S \times S^1),  \\
\mathsf{Mor}(\mathcal{Q}^\prime) & = (V_N \cap V_N \times \left\{\mathbbm{1}\right\} \times S^1) \sqcup (V_N \cap V_S \times \left\{\mathbbm{1}\right\} \times S^1) (V_S \cap V_N \times \left\{\mathbbm{1}\right\} \times S^1) \sqcup (V_S \cap V_S \times \left\{\mathbbm{1}\right\} \times S^1) \\
& = (V_N \times \left\{\mathbbm{1}\right\} \times S^1) \sqcup (V_N \cap V_S \times \left\{\mathbbm{1}\right\} \times S^1) \sqcup (V_S \cap V_N\times \left\{\mathbbm{1}\right\} S^1) \sqcup (V_S \times \left\{\mathbbm{1}\right\} S^1).
\end{align*}
The categorical space $\mathcal{Q}^\prime$ is equivalent to $S^3$ as a categorical space. Thus it remains to prove that $\mathcal{Q}$ is equivalent to $\mathcal{Q}^\prime$. According to \cite[Definition 2.20 on page 589 and Theorem 2.22 on page 590]{MR2805195}, we need to show that there exists a semi-strict bundle morphism
\[
\mathcal{Q}^ \prime \longrightarrow \mathcal{Q}.
\]
The required bundle morphism is defined in the trivial manner at the level of objects and morphism. Hence the result follows, yielding the following Morita equivalence diagram
\[
\begin{tikzcd}
& Q^\prime \arrow[dl] \arrow[dr] & \\
Q & & S^3
\end{tikzcd}.
\]
\end{proof}
\begin{prop}\label{modbu}
Consider the action of $\mathbb{B}U(1)$ on the total categorical space $\mathcal{P}$ of the categorical Hopf map,
\[
\mathcal{P} \times \mathbb{B}U(1) \longrightarrow \mathcal{P}
\]
given by \\
(\rNum{1}) on objects:
\[
\mathsf{Obj}(\mathcal{P}) \times \left\{\mathbbm{1}\right\} \longrightarrow \mathcal{P}, ~~ \bigl((v,r),\mathbbm{1}\bigr) \mapsto (v,r)
\]
(\rNum{2}) on morphisms:
\[
\mathsf{Mor}(\mathcal{P}) \times \mathbb{R}/\mathbb{Z} \longrightarrow \mathsf{Mor}(\mathcal{P}), ~~ \Bigl(\bigl(v,(m,[a]),r\bigr),[b]\Bigr) \mapsto \bigl(v,(m,[a+b]),r\bigr)
\]
Then the quotient\footnote{See \cite{MR2709030} for the definition of $2$-quotient.}categorical space $\mathcal{P}/\mathbb{B}U(1)$ is equivalent, as a categorical space, to $S^3$.
\end{prop}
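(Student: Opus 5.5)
Quotienting $\mathcal{P}$ by $\mathbb{B}U(1)$ acts only on the $\mathbb{R}/\mathbb{Z}$-component of morphisms. Concretely, it replaces the structure group $\mathcal{U}(1)$ by the quotient categorical group $\mathcal{U}(1)/\mathbb{B}U(1)$. I will identify this quotient with the action groupoid $[\mathbb{Z}\times\mathbb{R}\rightrightarrows\mathbb{R}]$. The plan is to show that $\mathcal{P}/\mathbb{B}U(1)$ is the principal bundle built from the cocycle $(g_{ij},\bar h_{ijk})$, where $\bar h_{ijk}$ is the integer part of $h_{ijk}$. I then compare this bundle with the Hopf bundle using Proposition \ref{giveHopf}.

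\textbf{Step 1: describe the quotient.} Since $\mathbb{B}U(1)$ has a single object, its action fixes $\mathsf{Obj}(\mathcal{P})$. On morphisms it acts freely by translation in $[a]$. The quotient therefore has objects $\sqcup_i V_i\times\mathbb{R}$ and morphisms $\sqcup_{i,j}(V_i\cap V_j)\times\mathbb{Z}\times\mathbb{R}$. Source, target and composition descend because the formulas of Construction \ref{cathopf} (via Remark \ref{objandmorcatbun}) depend on $[a]$ only additively: $\beta$ ignores $[a]$, and $\alpha$ merely shifts $[a]$. This is exactly the principal $[\mathbb{Z}\times\mathbb{R}\rightrightarrows\mathbb{R}]$-bundle obtained from Lemma \ref{priGtoG'}, applied to the projection crossed-module map $(\mathbb{R},\mathbb{Z}\times\mathbb{R}/\mathbb{Z})\to(\mathbb{R},\mathbb{Z})$.

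\textbf{Step 2: collapse the action groupoid.} The functor $q:[\mathbb{Z}\times\mathbb{R}\rightrightarrows\mathbb{R}]\to[S^1\rightrightarrows S^1]$ of Construction \ref{phiandkappa} is an equivalence of categorical groups. It is fully faithful because the $\mathbb{Z}$-action on $\mathbb{R}$ is free, and essentially surjective because $q_0$ is surjective. Moreover, $q$ has local sections, so the equivalence is continuous locally. The map $q$ is precisely $\widetilde{F}$ of Definition \ref{curlcirtocir}, factored through the quotient. Consequently, applying $q$ fibrewise gives a semi-strict bundle morphism from the quotient bundle of Step 1 to the categorical space $\mathcal{Q}$ of Proposition \ref{giveHopf}. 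It is the identity on $V_i$, sends $r\mapsto q_0(r)$, and sends $(m,r)\mapsto\mathbbm{1}$. Compatibility with the cocycle data holds because $g'_{ij}=q_0\circ g_{ij}$ and $\beta(h_{ijk})\in\mathbb{Z}$ maps to $1$. By \cite[Theorem 2.22]{MR2805195}, this bundle morphism is an equivalence.

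\textbf{Step 3: conclude.} Proposition \ref{giveHopf} shows that $\mathcal{Q}$ is Morita equivalent to $S^3$. Composing this with Steps 1 and 2 gives $\mathcal{P}/\mathbb{B}U(1)\simeq S^3$.

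The main obstacle is Step 1. I must check that the 2-quotient of \cite{MR2709030} really agrees with this naive strict quotient of the morphism space. This requires freeness of the action on $\mathsf{Mor}(\mathcal{P})$ and triviality on objects, so that no extra 2-cells or stacky identifications arise. I would also need to verify continuity of the descended structure maps, including composition with the $h_{ijk}$ terms. I would attempt this by verifying the universal property of the quotient directly on the explicit local charts $V_i\times\mathcal{U}(1)$.
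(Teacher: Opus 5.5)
Your proposal is correct and follows essentially the same route as the paper. The paper also describes $\mathcal{P}/\mathbb{B}U(1)$ as having objects $\sqcup_i V_i\times\mathbb{R}$ and morphisms $\sqcup_{i,j}(V_i\cap V_j)\times\mathbb{Z}\times\mathbb{R}$, invokes the equivalence $[\mathbb{Z}\times\mathbb{R}\rightrightarrows\mathbb{R}]\simeq[S^1\rightrightarrows S^1]$, and concludes via Proposition \ref{giveHopf}. You supply details the paper omits: the quotient crossed module via Lemma \ref{priGtoG'}, the explicit comparison functor, and the freeness argument showing the 2-quotient is the naive one.

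One adjustment: Wockel's bundle morphisms and his Theorem 2.22 concern bundles with a fixed structure 2-group, but your map changes the structure group along $q$. It is cleaner to check the weak equivalence directly. The functor to $\mathcal{Q}$ is fully faithful, since both sides are equivalence relations and the morphism space of $\mathcal{P}/\mathbb{B}U(1)$ is the pullback of that of $\mathcal{Q}$. It is also a surjective submersion on objects.
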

\begin{proof}
Write $\mathcal{Q}^{\prime\prime} = \mathcal{P}/\mathbb{B}U(1)$. The group of objects $\mathsf{Obj}(\mathcal{Q}^{\prime\prime})$ is
\[
\mathsf{Obj}(\mathcal{Q}^{\prime\prime}) = \sqcup_i (V_i \times \mathbb{R}).
\]
The group of morphims $\mathsf{Mor}(\mathcal{Q}^{\prime\prime})$ is
\[
\mathsf{Mor}(\mathcal{Q}^{\prime\prime}) = \sqcup_{i,j} \Bigl((V_i \cap V_j) \times \mathbb{Z} \times \mathbb{R}\Bigr).
\]
By Proposition \ref{giveHopf} and the fact that the action groupoid $[\mathbb{Z} \times \mathbb{R} \rightrightarrows \mathbb{R}]$ is equivalent to the groupoid $[S^1 \rightrightarrows S^1]$, it follows that $\mathcal{Q}^{\prime\prime}$ presents the same categorical space as the total space of the Hopf, namely $S^3$. Hence $\mathcal{P}/\mathbb{B}U(1) \simeq S^3$ as categorical spaces.
\end{proof}
\begin{rem}\label{spacey2andl}
As observed in Construction \ref{cathopf}, the space of objects of our categorical bundle is
\[
Y = \sqcup_i (V_i \times \mathbb{R}).
\]
From this, one can construct the basic bundle gerbe using the cover $\left\{V_i \times S^1\right\}_{i=1}^6$ of $S^3$ by six solid tori. This cover arises naturally from the construction of the Hopf map with respect to the cover $\left\{V_i\right\}_{i=1}^6$ of $S^2$. Each solid torus is embedded into $S^3$ by means of a local section of the Hopf map: if $s_i:V_i \longrightarrow S^3$ denotes a local section, then the embedding 
$$f_i:V_i \times S^1 \longrightarrow S^3$$
is given by
\[
f_i(v,z) = s_i(v) \cdot z, ~~~~~ z = \cos(t) + \sin(t) \mathsf{k}.
\]
These embeddings are compatible with the Hopf map, since
\[
\eta\left(f_i(v,z)\right) = v.
\]
We thus define the map $\pi:Y \longrightarrow S^3$ to be induced by the maps $f_i$. Consequently,
\[
Y^{[2]} = \sqcup_{i,j} \left((V_i \cap V_j) \times \mathbb{R} \times \mathbb{Z}\right).
\]
The associated $U(1)$-bundle $L$ over $Y^{[2]}$ is given by
\[
L = \sqcup_{i,j} \left((V_i \times V_j) \times \mathbb{R} \times \mathbb{Z} \times S^1\right).
\]
Hence, the categorical space $\mathcal{Q}^{\prime\prime}$ in Proposition \ref{modbu} is equivalent to the categorical space $\mathcal{Y}$, where $\mathsf{Obj}(\mathcal{Y}) = Y$ and $\mathsf{Mor}(\mathcal{Y}) = Y \times_{S^3} Y$.
\end{rem}
\begin{obs}
The categorical space $\mathcal{Q}$ in Proposition \ref{giveHopf} is equivalent to the categorical space $\mathcal{Q}^{\prime\prime}$ in Proposition \ref{modbu}. This follows from similar arguments as in the proof of Proposition \ref{giveHopf}.
\end{obs}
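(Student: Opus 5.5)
The plan is to exhibit an explicit continuous functor $\Theta:\mathcal{Q}^{\prime\prime}\longrightarrow\mathcal{Q}$, check that it is fully faithful and essentially surjective, and then build a continuous weak inverse from local data. Here $\mathcal{Q}$ is the total categorical space of the principal $U(1)$-bundle with transition data $g^\prime_{ij}=\widetilde{F}_0\circ g_{ij}$, and $\mathcal{Q}^{\prime\prime}=\mathcal{P}/\mathbb{B}U(1)$.

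On objects I take $\Theta_0:\sqcup_i(V_i\times\mathbb{R})\longrightarrow\sqcup_i(V_i\times S^1)$, $(i,v,r)\mapsto(i,v,q_0(r))$. On morphisms I take $\Theta_1:(i,j,v,m,r)\mapsto(i,j,v,\mathbbm{1},q_0(r))$, which is $\widetilde{F}$ applied fibrewise after quotienting the $\mathbb{R}/\mathbb{Z}$-component by $\mathbb{B}U(1)$.

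Functoriality comes first. For sources this is immediate. For targets, I need
\[
q_0\bigl(r+m-g_{ij}(v)\bigr)=g^\prime_{ij}(v)^{-1}\cdot q_0(r),
\]
which holds because $q_0$ is a homomorphism $\mathbb{R}\to S^1$ killing $\mathbb{Z}$. Compatibility with composition then reduces to $q_0$ killing $\beta(h_{ijk})\in\mathbb{Z}$, exactly as in Lemma \ref{priGtoG'}, using the composition formula of Remark \ref{objandmorcatbun}.

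Next, essential surjectivity and full faithfulness. Essential surjectivity is clear, since $\Theta_0$ is already surjective: $q_0$ is onto $S^1$. For full faithfulness, fix objects $(i,v,r)$ and $(j,v,r^\prime)$ of $\mathcal{Q}^{\prime\prime}$.
\begin{itemize}
\item A morphism between them is an integer $m$ with $r^\prime=r+m-g_{ij}(v)$, so there is at most one, since $m$ is determined.
\item A morphism in $\mathcal{Q}$ between the images exists exactly when $q_0(r^\prime)=q_0(r-g_{ij}(v))$, i.e. when $r^\prime-r+g_{ij}(v)\in\mathbb{Z}$.
\end{itemize}
So both hom-sets are singletons or both are empty, and $\Theta$ is a bijection on hom-sets. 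This is the argument behind ``$[\mathbb{Z}\times\mathbb{R}\rightrightarrows\mathbb{R}]\simeq[S^1\rightrightarrows S^1]$'' used in Proposition \ref{modbu}, now carried out relative to the cover.

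The main obstacle is that an equivalence of categorical spaces in the paper's sense needs a \emph{continuous} weak inverse, and $q_0$ has no global continuous section. I would handle this by refining. On each $V_i\times S^1$, split $S^1$ into two arcs $A_\pm$ with continuous lifts $\ell_\pm:A_\pm\to\mathbb{R}$, and pass to the cover $\{V_i\times A_\pm\}$ of $\mathsf{Obj}(\mathcal{Q})$. The refined \v{C}ech groupoid $\widetilde{\mathcal{Q}}$ is equivalent to $\mathcal{Q}$ through a surjective local homeomorphism on objects that is fully faithful. On $\widetilde{\mathcal{Q}}$ the lifts define a continuous functor to $\mathcal{Q}^{\prime\prime}$. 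Its value on a morphism is the integer $\ell_\pm(\cdot)-\ell_\pm(\cdot)+g_{ij}$, which is locally constant and hence continuous. The natural isomorphisms to the identities are given by the integer discrepancies between the lifts, again locally constant.

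This gives a Morita equivalence zig-zag
\[
\mathcal{Q}\longleftarrow\widetilde{\mathcal{Q}}\longrightarrow\mathcal{Q}^{\prime\prime}.
\]
That is the same form of conclusion as in Proposition \ref{giveHopf}, in line with the paper's remark that Waldorf's anafunctor framework is used where strict inverses are unavailable. Composing with Proposition \ref{modbu} gives the consistency $\mathcal{Q}\simeq\mathcal{Q}^{\prime\prime}\simeq S^3$.
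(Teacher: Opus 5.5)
Your proof is correct. It reaches the same kind of conclusion as the paper, namely a Morita zig-zag rather than an equivalence with a continuous strict weak inverse, but it justifies the key step differently.

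The paper's proof is only the instruction to argue as in Proposition~\ref{giveHopf}. That means writing down a map ``in the trivial manner'' and appealing to Wockel's results on semi-strict bundle morphisms (Definition~2.20 and Theorem~2.22 of the cited work) to conclude. You instead make that trivial map explicit as $\Theta=\sqcup_i(\mathrm{id}_{V_i}\times q_0)$. You check functoriality, full faithfulness and essential surjectivity by hand, and you build the inverse anafunctor from local lifts $\ell_\pm$ of $q_0$.

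Your route buys two things.
\begin{itemize}
\item It works purely at the level of categorical spaces, so it does not need a common structure categorical group. As constructed, $\mathcal{Q}$ is a principal $U(1)$-bundle, whereas $\mathcal{Q}^{\prime\prime}$ naturally carries an action of the quotient $\mathcal{U}(1)/\mathbb{B}U(1)\cong[\mathbb{Z}\times\mathbb{R}\rightrightarrows\mathbb{R}]$. A bundle-morphism statement for a fixed structure group therefore does not apply verbatim; it would need an extra change-of-structure-group step along $q$.
\item It explains why only a Morita equivalence can be expected: $\Theta$ has no continuous strict weak inverse, because $q_0$ has no continuous section.
\end{itemize}

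Two small points on your write-up.
\begin{itemize}
\item Full faithfulness should be stated topologically. You need the map $\mathsf{Mor}(\mathcal{Q}^{\prime\prime})\to\bigl(\mathsf{Obj}(\mathcal{Q}^{\prime\prime})\times\mathsf{Obj}(\mathcal{Q}^{\prime\prime})\bigr)\times_{\mathsf{Obj}(\mathcal{Q})\times\mathsf{Obj}(\mathcal{Q})}\mathsf{Mor}(\mathcal{Q})$ to be a homeomorphism. This holds because $m=r^\prime-r+g_{ij}(v)$ is locally constant, which is the same observation you already use in the refinement.
\item Once that is in place, $\Theta$ is already a weak equivalence of topological groupoids. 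Indeed $\Theta_0$ is a surjective local homeomorphism and so has local sections. The refined groupoid $\widetilde{\mathcal{Q}}$ is therefore optional; it only makes the inverse anafunctor explicit.
\end{itemize}
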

\begin{obs}\label{mapH}
The map\footnote{See \cite{MR2709030} for the definition of a $2$-bundle over a $2$-space.} $$\widetilde{H}:\mathcal{P} \longrightarrow \mathcal{P}/\mathbb{B}U(1)$$, defined by \\
(\rNum{1}) on objects: $\widetilde{H}_0(v,r) = (v,r)$ \\
(\rNum{2}) on morphisms: $\widetilde{H}_1\bigl((v,(m,[a])),r\bigr) = (v,m,r)$ \\
is a non-trivial principal $\mathbb{B}U(1)$-bundle.
\begin{proof}
Equivalently, by Remark \ref{butobunandconv}, we show that the associated bundle gerbe $(L,Y,S^3)$ is non-trivial. By definition, the space of objects of the bundle gerbe is
\[
Y = \mathsf{Obj}(\mathcal{P}) = \sqcup_i (V_i \times \mathbb{R}),
\]
with projection $\pi:Y \longrightarrow S^3$ as described in Remark \ref{spacey2andl}. The space of morphisms is
$$L = \mathsf{Mor}(\mathcal{P}) = \sqcup_{i,j} \bigl((V_i \cap V_j) \times (\mathbb{Z} \times \mathbb{R}/\mathbb{Z}) \times \mathbb{R}\bigr).$$
The map $\lambda:L \longrightarrow Y^{[2]}$ is given by:
\[
\lambda\bigl(v,(m,[a]),r\bigr) = \left((v,r),(v,r+m)\right).
\]
The bundle gerbe product
$$m:\pi_{12}^*L \otimes \pi_{23}^*L \longrightarrow \pi_{13}^*L$$
is defined case by case, as in Remark \ref{butobunandconv}, using the identifications of $L$ and $Y^{[2]}$ described in Remark \ref{spacey2andl}. To check the non-triviality, take the cover $\left\{V_i \times S^1\right\}_{i=1}^6$ of $S^3$, obtained from the cover $\left\{V_i\right\}_{i=1}^6$ of $S^2$. Define local sections of $\pi$ by
\begin{align*}
sec_1&:V_1 \times S^1 \longrightarrow Y, ~~ (v,e^{2\pi\theta\mathsf{i}}) \longmapsto (v,\theta), \\
sec_4&:V_4 \times S^1 \longrightarrow Y, ~~ (v,e^{2\pi\theta^\prime\mathsf{i}}) \longmapsto (v,\theta^\prime), \\
sec_5&:V_5 \times S^1 \longrightarrow Y, ~~ (v,e^{2\pi\theta^{\prime\prime}\mathsf{i}}) \longmapsto (v,\theta^{\prime\prime}).
\end{align*}
These induce maps
\begin{align*}
s_{14}&:(V_1 \times S^1) \cap (V_4 \times S^1) \longrightarrow Y^{[2]}, ~~ (v,z) \longmapsto (v,\theta,\theta^\prime-\theta), \\
s_{45}&:(V_4 \times S^1) \cap (V_5 \times S^1) \longrightarrow Y^{[2]}, ~~ (v,z^\prime) \longmapsto (v,\theta^\prime,\theta^{\prime\prime}-\theta^\prime), \\
s_{15}&:(V_1 \times S^1) \cap (V_5 \times S^1) \longrightarrow Y^{[2]}, ~~ (v,z) \longmapsto (v,\theta,\theta^{\prime\prime}-\theta).
\end{align*}
Pulling back $L$ along these maps yields line bundles $L_{14}$, $L_{45}$ and $L_{15}$. Choosing unit sections $\delta_{14}$, $\delta_{45}$ and $\delta_{15}$, the gerbe product gives the relation
\[
m(\delta_{14},\delta_{45}) = g_{145} \delta_{15}.
\]
for some $U(1)$-valued function
\[
g_{145}:(V_1 \times S^1) \cap (V_4 \times S^1) \cap (V_5 \times S^1) \longrightarrow U(1)
\]
This determines a class $[g] \in \check{H}^2\left(S^3,\underline{U(1)}\right)$. The connecting homomorphism 
$$\check{H}^2\bigl(S^3,\underline{U(1)}\bigr) \longrightarrow \check{H}^3(S^3,\underline{\mathbb{Z}})$$
is obtained via the Snake lemma applied to the diagram
\[
\begin{tikzcd}
& \frac{\check{C}^2\bigl(S^3,\underline{\mathbb{Z}}\bigr)}{\mathsf{Im}(\delta^1_{\underline{\mathbb{Z}}})} \arrow[r] \arrow[d,"{\delta^2_{\underline{\mathbb{Z}}}}"'] & \frac{\check{C}^2\bigl(S^3,\underline{\mathbb{R}}\bigr)}{\mathsf{Im}(\delta^1_{\underline{\mathbb{R}}})} \arrow[r,"p"] \arrow[d,"\delta^2_{\underline{\mathbb{R}}}"'] & \frac{\check{C}^2\bigl(S^3,\underline{U(1)}\bigr)}{\mathsf{Im}(\delta^1_{\underline{U(1)}})} \arrow[r] \arrow[d,"\delta^2_{\underline{U(1)}}"'] & 0 \\
0 \arrow[r] & \mathsf{Ker}(\delta^3_{\underline{\mathbb{Z}}}) \arrow[r,"i"'] & \mathsf{Ker}(\delta^3_{\underline{\mathbb{R}}}) \arrow[r] & \mathsf{Ker}(\delta^3_{\underline{U(1)}}) &
\end{tikzcd}
\]
and sends $[g]$ to $1 \in H^3(S^3;\mathbb{Z})$. Hence, the bundle gerbe $(L,Y,S^3)$ is non-trivial.
\\
Finally, note that the construction of $g_{145}$ depends on the morphisms and their compositions in $\mathcal{P}$, as detailed in Construction \ref{cathopf}.
\end{proof}
\end{obs}
\begin{obs}
The bundle gerbes constructed in Examples \ref{murbun} and \ref{ourbun}, together with the bundle gerbe arising in Observation \ref{mapH}, are all equivalent. Indeed, as we have verified in each case, their Dixmier–Douady classes coincide: each represents the generator $1$ in $H^3(S^3,\mathbb{Z})$.
\end{obs}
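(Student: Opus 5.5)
The statement is the final Observation: the bundle gerbes of Examples \ref{murbun} and \ref{ourbun} and the one from Observation \ref{mapH} are all stably isomorphic, i.e.\ equivalent. My plan is to reduce everything to Dixmier--Douady classes. Bundle gerbes over $S^3$ are classified up to stable isomorphism by $H^3(S^3;\mathbb{Z})\cong\mathbb{Z}$ (Remark \ref{clager}). So it suffices to show that each of the three gerbes has class equal to the positive generator $1$. Equivalence then follows from the classification, with no explicit stable isomorphism needed.

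The first step handles Example \ref{murbun}, following Observation \ref{equiofbungerbes}. The only non-trivial data is the line bundle $L_{W\cap W'}=f^*L_\eta$. Its class is $1\in H^2(W\cap W';\mathbb{Z})\cong H^2(S^2;\mathbb{Z})$ by our Hopf convention. The Mayer--Vietoris connecting map (\ref{mayervie1}) is an isomorphism, because $W$ and $W'$ are contractible, so the Dixmier--Douady class is $\pm1$. I would fix the sign by identifying the \v{C}ech cocycle $g_{ijk}$ of Remark \ref{clager} with the coboundary of the transition function of $L_{W\cap W'}$. With the orientation conventions fixed in the background section, I expect this to give $+1$.

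The second step handles Example \ref{ourbun}. The \v{C}ech--de Rham diagram in Figure \ref{cechderhamcom} exhibits the transition data $e^{2\pi\mathsf{i} my}$ and the connection $x\,dy$ on the torus. Its curvature integrates to $\int_{T^2}dx\wedge dy=1$, so $L_{W_N\cap W_S}$ has Chern class $1\in H^2(T^2;\mathbb{Z})$. I would then run the Mayer--Vietoris sequence for $\{W_N,W_S\}$, where $W_N$ and $W_S$ are solid tori. The point to check is that $H^2(W_N\cap W_S)\to H^3(S^3)$ sends the fundamental class of the torus to the generator. This holds because $H^2(W_N)\oplus H^2(W_S)=0$ and $H^3$ of each solid torus vanishes, which makes the connecting map surjective from $\mathbb{Z}$ onto $\mathbb{Z}$, hence an isomorphism.

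The third step handles Observation \ref{mapH}. Its proof computes $g_{145}$ from the composition law in Construction \ref{cathopf}, which is governed by the integer $h_{145}=(-1,[0])$ and its companions. The Snake-lemma connecting homomorphism then sends $[g]$ to $1$. The main obstacle is sign consistency across the three computations: every other ingredient is an isomorphism $\mathbb{Z}\to\mathbb{Z}$, so only orientations matter. I would settle this by checking that all three pull back to the same generator after restricting to a neighbourhood of the equatorial circle. Alternatively, and more simply, I would note that even a sign mismatch still gives an equivalence of gerbes up to the automorphism $L\mapsto L^{-1}$. I would state the result with classes $\pm1$ if the orientation check proves delicate.
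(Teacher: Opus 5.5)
Your plan is the paper's own argument. The paper proves this Observation by pointing back to the case-by-case Dixmier--Douady computations and then invoking the classification of gerbes over $S^3$ by $H^3(S^3;\mathbb{Z})$. Those computations are the Mayer--Vietoris connecting map in Observation \ref{equiofbungerbes}, used for Examples \ref{murbun} and \ref{ourbun}, and the Snake-lemma computation in Observation \ref{mapH}. Your checks fill in what the paper only asserts: that the connecting maps are isomorphisms (because $W,W'$ are contractible and $W_N,W_S$ are solid tori), and that the curvature of $x\,dy$ integrates to $1$ on $T^2$.

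The weak point is your handling of the sign. Your fallback does not work. If one class came out as $-1$, that gerbe would be stably isomorphic to the \emph{dual} of the others, not to the others themselves. Stable isomorphism classes are in bijection with $H^3(S^3;\mathbb{Z})\cong\mathbb{Z}$, so classes $+1$ and $-1$ give inequivalent gerbes. Passing to $L^{-1}$ produces a different gerbe, not an automorphism of the given one, so a conclusion stated with classes $\pm1$ would not prove the Observation.

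The sign therefore has to be pinned down. Fix one orientation of $S^3$ and one sign convention for the connecting homomorphism, and show each class is $+1$ with respect to both. The paper effectively fixes this by convention in Observation \ref{equiofbungerbes} and then asserts $+1$ in the other cases.

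Your proposed check, restricting to a neighbourhood of the equatorial circle, also cannot detect the sign. Any class in $H^3(S^3;\mathbb{Z})$ restricts to zero on such a neighbourhood, and on the thickened torus lying over it. The comparison has to go through the $H^2$ data on the overlaps, using that single fixed convention. Alternatively, compare the integrals over $S^3$ of the $3$-curvatures of chosen connections and curvings.
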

\begin{rem}\label{factorizationdiagram}
Considering the maps $\eta:S^3 \longrightarrow S^2$, $\rho:\mathcal{P} \longrightarrow S^2$, and $\widetilde{H}:\mathcal{P} \longrightarrow \mathcal{P}/\mathbb{B}U(1)$, we obtain the commutative diagram:
\begin{equation}\label{factordiag}
\begin{tikzcd}
\mathcal{P} \arrow[rr,bend left=45,"H"] \arrow[r,"\widetilde{H}"] \arrow[dr,"\rho"'] & \mathcal{P}/\mathbb{B}U(1) \arrow[d,"\widetilde{\eta}"'] \arrow[r,"\simeq"] & S^3 \arrow[dl,"\eta"] \\
& S^2 &
\end{tikzcd}
\end{equation}
Here, $\widetilde{\eta}$ denotes the categorical bundle with structure categorical group $U(1)$ associated to the Hopf map $\eta$, constructed as in Lemma \ref{primathGtoG}. By Proposition \ref{modbu}, the categorical space $\mathcal{P}/\mathbb{B}U(1)$ is equivalent to $S^3$. The map $H$ is then defined as the composition of this equivalence with $\widetilde{H}$. Consequently, both $H$ and $\widetilde{H}$ yield equivalent constructions of the basic bundle gerbe over $S^3$.
\end{rem}
\begin{obs}\label{nontri2bun}
We show that the map $\rho:\mathcal{P} \longrightarrow S^2$ defines a non-trivial principal $\mathcal{U}(1)$-bundle, using Proposition \ref{giveHopf}. \\
First, recall the exact sequence\footnote{See \cite{MR3764535}.}of categorical groups
\[
1 \longrightarrow \mathbb{B}U(1) \longrightarrow \mathcal{U}(1) \longrightarrow U(1) \longrightarrow 1,
\]
which is classified by \cite[Theorem 99 on page 662]{MR2800361} as an element of
\begin{align*}
H^3_{SM}(U(1);\mathbb{Z}) & \cong H^4(BU(1);\mathbb{Z}) \\
& \cong \mathbb{Z}.
\end{align*}
From this extension we obtain the diagram
\[
\begin{tikzcd}
B\big\vert \mathbb{B}U(1) \big\vert \arrow[r] & B \big\vert \mathcal{U}(1) \big\vert \arrow[d] \\
S^2 \arrow[ur,dashed] \arrow[r] & BU(1) \arrow[d] \\
& K(\mathbb{Z},4)
\end{tikzcd}
\]
where $\big\vert - \big\vert$ denotes the geometric realisation\footnote{See \cite{MR2597732}.}of the nerve a categorical group. \\
Note that
\begin{align*}
BU(1) & \simeq K(\mathbb{Z},2) \\
B\big\vert \mathbb{B}U(1) \big\vert & \simeq K(\mathbb{Z},3).
\end{align*}
Thus, we have the extended diagram
\[
\begin{tikzcd}
K(\mathbb{Z},3) \arrow[r,"i"] & B \big\vert \mathcal{U}(1) \big\vert \arrow[r,"f"] & K(\mathbb{Z},2) \arrow[r,"g"] & K(\mathbb{Z},4) \\
& S^2 \arrow[ul,"\alpha"] \arrow[u,dashed,"\beta"] \arrow[ur,"\gamma"] \arrow[urr,"\lambda"'] & & 
\end{tikzcd}
\]
Here: \\
$\bullet$ the map $f$ is induced by applying the functor $B(-)$ to the projection $\mathcal{U}(1) \longrightarrow U(1)$. \\
$\bullet$ the map $g$ classifies this as a $K(\mathbb{Z},3)$-bundle. \\
$\bullet$ the maps $\beta$ and $\gamma$ classify the categorical Hopf map and the Hopf map, respectively. \\
By \cite[Lemma 3 on page 17]{MR2597732}, we obtain the exact sequence
\[
[S^2 \longrightarrow K(\mathbb{Z},3)] \longrightarrow [S^2 \longrightarrow B\big\vert\mathcal{U}(1)\big\vert] \longrightarrow [S^2 \longrightarrow K(\mathbb{Z},2)] \longrightarrow [S^2 \longrightarrow K(\mathbb{Z},4)]
\]
Therefore, the categorical Hopf map is a lift of the Hopf map, and we deduce that
\[
\check{H}^1(S^2,\mathcal{U}(1)) \cong \mathbb{Z}.
\]
In particular, the associated principal $\mathcal{U}(1)$-bundle in non-trivial, and is classified by the positive generator $1$.
\end{obs}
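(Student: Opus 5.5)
The plan is to compute the set $[S^2, B|\mathcal{U}(1)|]$ directly from the fibre sequence displayed in the statement, and then identify the class of $\rho$ by means of Proposition \ref{giveHopf}.

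\textbf{Step 1: identify $\check{H}^1(S^2,\mathcal{U}(1))$ with homotopy classes.} I would invoke the classification theorem for principal bundles with structure group a well-pointed topological categorical group. Here $\mathcal{U}(1)$ is built from the Lie groups $\mathbb{R}$ and $\mathbb{Z}\times\mathbb{R}/\mathbb{Z}$, so the theorem applies. This gives a bijection between isomorphism classes of $\mathcal{U}(1)$-valued cocycles on a paracompact space and $[S^2, B|\mathcal{U}(1)|]$, in the sense of \cite{MR2709030} and \cite{MR2597732}. Under this bijection the cocycle $(V_i,g_{ij},h_{ijk})$ of Construction \ref{cathopf} goes to the map $\beta$.

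This classification is the step I expect to be the main obstacle. The paper does not prove it; one must check that the hypotheses of \cite{MR2597732} hold for $\mathcal{U}(1)$, and that Wockel-style cocycles agree with \v{C}ech cohomology with coefficients in $\mathcal{U}(1)$. My first approach would be to cite the result and verify only the well-pointedness and numerability hypotheses, since the cover $\{V_i\}$ is a finite open cover of a compact manifold.

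\textbf{Step 2: compute the homotopy classes.} Since $B|\mathcal{U}(1)|$ sits in a fibration $K(\mathbb{Z},3)\to B|\mathcal{U}(1)|\xrightarrow{f} K(\mathbb{Z},2)$, the long exact homotopy sequence gives the following.
\begin{itemize}
\item $\pi_1 B|\mathcal{U}(1)| = 0$, so $B|\mathcal{U}(1)|$ is simply connected.
\item The segment $0=\pi_2 K(\mathbb{Z},3)\to \pi_2 B|\mathcal{U}(1)|\to \pi_2 K(\mathbb{Z},2)=\mathbb{Z}\to \pi_1K(\mathbb{Z},3)=0$ shows that $f_\ast:\pi_2B|\mathcal{U}(1)|\to\mathbb{Z}$ is an isomorphism.
\end{itemize}
Because the target is simply connected, free and based homotopy classes agree, so $[S^2,B|\mathcal{U}(1)|]\cong\pi_2 B|\mathcal{U}(1)|\cong\mathbb{Z}$, and $f_\ast$ realises this isomorphism. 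This agrees with the four-term sequence quoted from \cite[Lemma 3 on page 17]{MR2597732}:
\begin{itemize}
\item the outer terms are $H^3(S^2;\mathbb{Z})=0$ and $H^4(S^2;\mathbb{Z})=0$;
\item the homotopy computation above supplies injectivity, which exactness of pointed sets alone would not give.
\end{itemize}
Hence $\check{H}^1(S^2,\mathcal{U}(1))\cong\mathbb{Z}$.

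\textbf{Step 3: locate $[\rho]$.} The map $f$ is $B$ applied to the projection functor $\mathcal{U}(1)\to U(1)$. On crossed modules this projection is exactly $\widetilde{F}$ of Definition \ref{curlcirtocir}. Therefore $f_\ast[\beta]$ is the class of the cocycle $(\widetilde F_0\circ g_{ij},\widetilde F_1\circ h_{ijk})$, which is a cocycle by Lemma \ref{priGtoG'}. By Proposition \ref{giveHopf} this cocycle presents the Hopf bundle, so $f\circ\beta\simeq\gamma$, and $\gamma$ represents $1\in H^2(S^2;\mathbb{Z})$ by the convention of Section 2.

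Since $f_\ast$ is an isomorphism, $[\beta]$ is the generator of $\mathbb{Z}$ corresponding to $1$. In particular $[\beta]\neq 0$, so $\rho$ is non-trivial and is the lift of the Hopf map classified by the positive generator.
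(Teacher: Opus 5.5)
Your proposal is correct and follows the paper's route. You use the fibre sequence $K(\mathbb{Z},3)\to B|\mathcal{U}(1)|\to K(\mathbb{Z},2)\to K(\mathbb{Z},4)$ to get $[S^2,B|\mathcal{U}(1)|]\cong[S^2,K(\mathbb{Z},2)]\cong\mathbb{Z}$, then Proposition~\ref{giveHopf} to get $f\circ\beta\simeq\gamma$, so $[\rho]$ is the positive generator. Your homotopy-group argument ($B|\mathcal{U}(1)|$ simply connected, $f_\ast$ an isomorphism on $\pi_2$) also supplies the injectivity that the paper's exact sequence of pointed sets leaves implicit.
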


\section{The categorical group \texorpdfstring{$String(3)$}{String(3)}}
In this section, we conjecture how the categorical group $String(3)$ can be interpreted as the categorical group of symmetries of the categorical Hopf map. To motivate this perspective, we first examine the classical setting by considering the group of symmetries of the Hopf map.
\begin{rem}
When we speak of "symmetries" of the Hopf map, we mean pairs $(A,\psi)$, where $A \in SO(3)$ and $\psi:S^3 \longrightarrow S^3$, such that the diagram
\[
\begin{tikzcd}
S^3 \arrow[r,"\psi"] \arrow[d,"\eta"'] & S^3 \arrow[d,"\eta"] \\
S^2 \arrow[r,"A"'] & S^2
\end{tikzcd}
\]
commutes and, in addition $\psi$ preserves the chosen Ehresmann connection on $\eta$.
\end{rem}
\begin{prop}\cite[Proposition 4.5.3 on page 52]{MR900587}\label{secondprop}
Let $M$ be a simply-connected smooth manifold, and let $\frac{\omega}{2\pi}$ be an integral closed $2$-form on $M$. We have the following statements: \\
(\rNum{1}) There exists a principal $U(1)$-bundle $\pi:P \longrightarrow M$ with a connection $\zeta$ whose curvature is $\omega$. \\
(\rNum{2}) Given two principal $U(1)$-bundles $\pi:P \longrightarrow M$ and $\pi^\prime:P^\prime \longrightarrow M$ with connections $\zeta$ and $\zeta^\prime$ with the same
curvature $\omega$, there exists a morphism\footnote{See \cite[on page 571]{MR2805195}.} $\psi:P \longrightarrow P^\prime$ of principal $U(1)$-bundles such that $[\psi^*\zeta^\prime] = [\zeta]$. Moreover, the morphism $\psi$ is unique up to multiplication by elements of $U(1)$.
\end{prop}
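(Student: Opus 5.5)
The plan is to prove (i) and (ii) separately, working with a good cover $\{U_i\}$ of $M$ and \v{C}ech--de Rham data as in Observation \ref{equiofbungerbes}. Throughout, I use that simple connectivity gives $H^1(M;\mathbb{R})=0$ and $\check{H}^1(M,\underline{\mathbb{Z}})=0$; the latter implies that every flat $U(1)$-bundle on $M$ is trivial.

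For (i), I would use integrality to build the bundle.
\begin{itemize}
\item By the Poincar\'e lemma, on each $U_i$ write $\omega = dA_i$.
\item On the overlaps, $A_j - A_i$ is closed, so on the contractible $U_i\cap U_j$ we can write $A_j - A_i = df_{ij}$ for real functions $f_{ij}$.
\item On triple overlaps, $c_{ijk} = \frac{1}{2\pi}(f_{jk}-f_{ik}+f_{ij})$ is locally constant. The \v{C}ech class of $c$ is the image of $[\omega/2\pi]$ under the de Rham--\v{C}ech isomorphism.
\item Integrality lets us modify the $f_{ij}$ by locally constant real functions so that $c_{ijk}\in\mathbb{Z}$.
\item Then $g_{ij}=e^{\mathsf{i}f_{ij}}$ satisfies the cocycle condition $g_{ij}g_{jk}=g_{ik}$. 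It defines $P$, by the construction of Remark \ref{objandmorcatbun} specialised as in Lemma \ref{primathGtoG}.
\item The local forms $\zeta_i = \pi^*A_i + \mathsf{i}^{-1}\theta^{-1}d\theta$ glue to a connection in the sense of Remark \ref{equivconnpri}, and its curvature is $d\zeta = \pi^*\omega$.
\end{itemize}
Simple connectivity is not needed for (i).

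For (ii), take bundles $(P,\zeta)$ and $(P',\zeta')$ with the same curvature $\omega$, and form $Q = P^\ast\otimes P'$, the bundle with transition functions $g_{ij}^{-1}g'_{ij}$, with connection $\zeta'-\zeta$. Its curvature is $\omega-\omega=0$, so $(Q,\zeta'-\zeta)$ is flat. I would then argue as follows.
\begin{itemize}
\item A flat $U(1)$-bundle is determined up to isomorphism by its holonomy representation $\pi_1(M)\to U(1)$. Since $\pi_1(M)=1$, this representation is trivial.
\item Hence $Q$ admits a global horizontal (parallel) section $s$.
\item Define $\psi(p)$ to be the unique $p'$ with $p^\ast\otimes p' = s(\pi(p))$. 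This $\psi$ is a bundle isomorphism $P\to P'$, and parallelism of $s$ translates into $\psi^*\zeta'=\zeta$.
\end{itemize}
The bracket $[\psi^*\zeta']=[\zeta]$ in the statement can be read as this equality, or as equality up to gauge; either reading is covered.

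For the uniqueness clause, suppose $\psi_1$ and $\psi_2$ both pull $\zeta'$ back to $\zeta$. Then $\psi_2=\psi_1\cdot u$ for some $u\colon M\to U(1)$, by $U(1)$-equivariance over $M$. Comparing pullbacks gives $u^{-1}du=0$, so $u$ is locally constant, and hence constant because $M$ is connected. This yields uniqueness up to multiplication by elements of $U(1)$.

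I expect the main obstacle to be the flat-implies-trivial step. If one avoids holonomy, the argument is as follows. Flatness lets us choose local parallel trivialisations, so the transition functions of $Q$ are locally constant, i.e.\ they give a class in $\check{H}^1(M,\underline{U(1)}_{\mathrm{const}})\cong \mathrm{Hom}(\pi_1 M, U(1))=0$. The subtle point is making sure the vanishing happens with constant (not merely smooth) cochains, so that the resulting global section is genuinely parallel and not just some smooth section. I would handle this by running the \v{C}ech argument over a good cover and using that $H^1(M;\mathbb{R})=0$, with $\pi_1=0$ killing the torsion part.
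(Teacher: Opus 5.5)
The paper gives no proof of this proposition; it is quoted from Pressley--Segal. So there is nothing in the paper to match step by step, and your argument stands on its own as a correct proof.

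\textbf{Comparison with the paper's viewpoint.} Following Pressley--Segal, the paper later relies on a path-space picture: the map $C$ in (\ref{themapC}) and the triples $(A,\gamma,z)$ in Construction \ref{symmofhopf}. In that picture one uses simple connectivity from the start. One takes $P$ to be pairs (path from a base point $x_0$, $z\in U(1)$) modulo the relation governed by $C$, and declares the evident lifts of paths horizontal. Uniqueness comes from parallel-transporting a chosen point of the fibre over $x_0$; the choice of that point is the $U(1)$ ambiguity.

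Your route separates the hypotheses more cleanly. Part (i) is Weil's \v{C}ech--de Rham construction, which needs only integrality, as you note. Part (ii) reduces via $P^\ast\otimes P'$ to triviality of flat line bundles. Simple connectivity therefore enters exactly once, through $\mathrm{Hom}(H_1(M),U(1))=0$, and connectedness alone gives constancy of the gauge function in the uniqueness clause.

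The path-space construction costs more in well-definedness checks: existence and independence of spanning surfaces, and the smooth structure on the quotient. In return it is canonical and is exactly the form needed to write down the extension of Proposition \ref{firstprop}. Your approach is more economical, and it shows that without simple connectivity, (ii) fails precisely by $H^1(M;U(1))$.

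\textbf{Two small corrections.}

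First, the implication in your opening sentence is wrong as stated. $\check H^1(M,\underline{\mathbb Z})=0$ does not force flat $U(1)$-bundles to be trivial. On $\mathbb{RP}^2$ one has $H^1(\mathbb{RP}^2;\mathbb Z)=0$, yet the flat bundle with holonomy $-1$ is nontrivial, even topologically. Flat bundles are classified by $H^1(M;U(1)_\delta)\cong \mathrm{Hom}(H_1(M),U(1))$, so what you need is $H_1(M)=0$. Your holonomy argument in (ii) and your final paragraph use exactly this, so nothing downstream breaks.

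Second, ``either reading is covered'' is true only for existence. Suppose $[\,\cdot\,]$ denoted gauge classes. Since $H^1(M;\mathbb R)=0$, every isomorphism $P\to P'$ would then satisfy $[\psi^\ast\zeta']=[\zeta]$. Uniqueness up to constants would be false, because you could compose $\psi$ with any non-constant gauge transformation $u\colon M\to U(1)$. So the statement holds only under the reading $\psi^\ast\zeta'=\zeta$, which is the one your uniqueness argument correctly uses.
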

\begin{prop}\cite[Proposition 4.4.2 on page 47]{MR900587}\label{firstprop} Let $G$ be a Lie group acting smoothly on a simply-connected smooth manifold $M$, leaving an integral closed $2$-from $\frac{\omega}{2\pi}$ on $M$ invariant. Then, there exists a central extension $\widetilde{G}$ of $G$ by $U(1)$ canonically associated to the pair $(M,\omega)$.
\end{prop}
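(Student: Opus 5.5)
The plan is to build the central extension explicitly, as the group of connection-preserving lifts of the action to a prequantum bundle, using Proposition \ref{secondprop}. First, by part (\rNum{1}) of Proposition \ref{secondprop}, choose a principal $U(1)$-bundle $\pi:P \longrightarrow M$ with connection $\zeta$ whose curvature is $\omega$; here we use that $\frac{\omega}{2\pi}$ is integral. Then define
\[
\widetilde{G} = \left\{(g,\psi) ~\big\vert~ g \in G,~ \psi:P \longrightarrow P \text{ a $U(1)$-bundle map covering } g,~ \psi^*\zeta = \zeta\right\},
\]
with multiplication given by composition in both components. The homomorphism $\widetilde{G} \longrightarrow G$ is $(g,\psi) \mapsto g$.

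Next I show this projection is surjective. Since $g^*\omega = \omega$, the pulled-back pair $(g^*P, g^*\zeta)$ is another principal $U(1)$-bundle over $M$ whose curvature is again $\omega$. Part (\rNum{2}) of Proposition \ref{secondprop} then gives a bundle morphism $P \longrightarrow g^*P$ intertwining the connections. Composing it with the canonical map $g^*P \longrightarrow P$ yields a lift $\psi$ of $g$ with $\psi^*\zeta = \zeta$.

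The kernel consists of connection-preserving automorphisms of $(P,\zeta)$ covering the identity. By the uniqueness clause of part (\rNum{2}), these are exactly the multiplications by constants in $U(1)$. This is where simple connectivity enters: a gauge transformation $u:M \longrightarrow U(1)$ preserves $\zeta$ iff $du = 0$, so $u$ is constant. Constant multiplications commute with every bundle map, since bundle maps are $U(1)$-equivariant, so the kernel is central. Together with surjectivity this gives a central extension
\[
1 \longrightarrow U(1) \longrightarrow \widetilde{G} \longrightarrow G \longrightarrow 1.
\]

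I expect the main obstacle to be canonicity and topology. For canonicity, a different choice $(P^\prime,\zeta^\prime)$ is related to $(P,\zeta)$ by a connection-preserving isomorphism $\Phi$, again by part (\rNum{2}). Conjugation by $\Phi$ identifies the two extensions. Since $\Phi$ is unique up to $U(1)$ and the kernel is central, this identification does not depend on the choice of $\Phi$, so $\widetilde{G}$ depends only on $(M,\omega)$ up to unique isomorphism.

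For the Lie group structure, I would first try to topologise $\widetilde{G}$ as a subgroup of the diffeomorphism group of $P$ and check that it is locally a product $U(1) \times G$. To get this, I would choose a local section of $\widetilde{G} \longrightarrow G$ near the identity: transport $\psi$ along horizontal lifts of paths from a basepoint, normalising at one point of $P$. Smooth dependence on $g$ then follows from the smoothness of the action and of parallel transport.
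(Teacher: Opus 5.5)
Your argument is correct, but it follows a different route from the one the paper attaches to this proposition. The paper gives no proof of its own here; it cites Pressley--Segal. The model it associates with Proposition \ref{firstprop}, in Construction \ref{symmofhopf}, is the \emph{path} construction:
\begin{itemize}
\item elements are classes of triples $(g,\gamma,z)$, where $\gamma$ is a path from a base point $x_0$ to $gx_0$ and $z\in U(1)$;
\item two triples over the same $g$ are identified via the exponentiated $\omega$-area $C(\gamma*{\gamma'}^{-1})$;
\item the product is $(g,\gamma,z)(g',\gamma',z')=(gg',\gamma*g\gamma',zz')$.
\end{itemize}
In that model the three hypotheses play separate roles. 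Simple connectivity makes every loop bound a surface, so the fibre over each $g$ is a single $U(1)$-torsor. Integrality makes $C$ independent of the chosen surface. Invariance of $\omega$ makes the product well defined. No bundle is needed, and the model is explicit enough to compute cocycles; this is how the paper obtains Table \ref{tildedihco}.

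You instead use the prequantum-bundle model, which the paper treats as the ``equivalent construction'' via Proposition \ref{secondprop}. Your surjectivity step is exactly its $\phi=\lambda\circ\psi$ argument. This route gives associativity for free, since the product is composition. It identifies the kernel and gives canonicity directly from the uniqueness clause of Proposition \ref{secondprop}. It is also the version that categorifies in Corollary \ref{corsecondpropcat}. The two models are matched by parallel transport: $(g,\gamma,z)$ goes to the lift $\psi$ with $\psi(p_0)=\tau_\gamma(p_0)\cdot z$. Well-definedness of this map is the fact that the holonomy of a bounding loop is the exponentiated $\omega$-integral over a spanning surface.

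Two small corrections.

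First, simple connectivity is not what makes a connection-preserving gauge transformation constant; $du=0$ already gives that on any connected $M$. Simple connectivity enters only through Proposition \ref{secondprop}(\rNum{2}), in your surjectivity and canonicity steps, because a flat $U(1)$-bundle on a simply connected manifold is trivial.

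Second, for the Lie structure, avoid working inside $\mathrm{Diff}(P)$. It is infinite-dimensional, and $(g,\psi)\mapsto\psi$ is not injective when the action is not effective. Instead, fix $p_0$ over $x_0$. The lifts of a fixed $g$ form a $U(1)$-torsor and $(\psi z)(p_0)=\psi(p_0)z$. So $(g,\psi)\mapsto(g,\psi(p_0))$ identifies $\widetilde G$ with the pullback $G\times_M P$ of $P$ along $g\mapsto gx_0$. This is exactly the fibre product $SO(3)\times_{S^2}S^3$ of Construction \ref{symmofhopf}. Your parallel-transport local sections then give smoothness of the group operations.
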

\begin{rem}
In \cite[following Propositions \ref{secondprop} and \ref{firstprop}]{MR900587}, Pressley and Segal present three equivalent constructions of the extension $\widetilde{G}$. At the end of this section, we provide the categorical analogues of these constructions.
\end{rem}
We now show that the symmetries of the Hopf map can be identified with the Lie group
\[
Spin^c(3) = Spin(3) \times_{\left\{\pm 1\right\}} U(1)
\]
by means of Propositions \ref{secondprop} and \ref{firstprop}.
\begin{const}\label{symmofhopf} We now specialise Propositions \ref{firstprop} and \ref{secondprop} to the case
$$M = S^2, ~~~~~ P = S^3, ~~~~~ G = U(1), ~~~~~ \pi = \eta,$$
where $\eta$ denotes the Hopf map.
Consider the volume form $\omega^\prime$ on $S^2$:
\begin{equation}
\omega^\prime = X dY \wedge dZ + Y dZ \wedge dX + Z dX \wedge dY.
\end{equation}
with coordinates $(X,Y,Z)$ on $S^2$. The form $\omega^\prime$ is invariant under the action of $SO(3)$ on $S^2$, since this group preserves the Euclidean volume of the sphere. \\
In spherical coordinates $(r,\theta,\phi)$, we have
\[
X = \cos(\theta) \sin(\phi), \qquad
Y = \sin(\theta) \sin(\phi), \qquad
Z = \cos(\phi),
\]
for $0 \leq \theta < 2\pi$ and $0 \leq \phi \leq \pi$.
\begin{figure}[H]
\centering
\begin{tikzpicture}[scale=1.5,tdplot_main_coords]
    \coordinate (O) at (0,0,0);
    \draw[thick,->] (0,0,0) -- (2,0,0) node[anchor=north east]{$X$};
    \draw[thick,->] (0,0,0) -- (0,2,0) node[anchor=north west]{$Y$};
\draw[thick,->] (0,0,0) -- (0,0,2) node[anchor=south]{$Z$};
    \tdplotsetcoord{r}{\rvec}{\thetavec}{\phivec}
    \draw[-stealth,color=blue] (O) -- (r) node[above] {$r$};
    \draw[dashed, color=blue] (O) -- (rxy);
    \draw[dashed, color=blue] (r) -- (rxy);
    \tdplotdrawarc{(O)}{0.2}{0}{\thetavec}{anchor=north}{$\theta$}
    \tdplotsetthetaplanecoords{\thetavec}
    \tdplotdrawarc[tdplot_rotated_coords]{(0,0,0)}{0.5}{0}%
        {\phivec}{anchor=south west}{$\phi$}
\end{tikzpicture}
\caption{Spherical coordinate system $(r,\theta,\phi)$}
\end{figure}
\noindent
Differentiating gives
\begin{align*}
dX & = -\sin(\theta) \sin(\phi) d\theta + \cos(\theta) \cos(\phi) d\phi, \\
dY & = \cos(\theta) \sin(\phi) d\theta + \sin(\theta) \cos(\phi) d\phi, \\
dZ & = -\sin(\phi) d\phi.
\end{align*}
Substituting into $\omega^\prime$ yields
\begin{align*}
\omega^\prime & = \cos(\theta) \sin(\phi)\left(\cos(\theta) \sin(\phi) d\theta + \sin(\theta) \cos(\phi) d\phi\right) \wedge \left(-\sin(\phi) d\phi\right) \\
& + \sin(\theta) \sin(\phi) \left(-\sin(\phi) d\phi\right) \wedge \left(-\sin(\theta) \sin(\phi) d\theta + \cos(\theta) \cos(\phi) d\phi\right) \\
& + \cos(\phi)\left(-\sin(\theta) \sin(\phi) d\theta + \cos(\theta) \cos(\phi) d\phi\right) \wedge \left(\cos(\theta) \sin(\phi) d\theta + \sin(\theta) \cos(\phi) d\phi\right) \\
& = -\cos^2(\theta)\sin^3(\phi) d\theta \wedge d\phi + \sin^2(\theta) \sin^3(\phi) d\phi \wedge d\theta \\
& - \sin^2(\theta)\sin(\phi)\cos^2(\phi) d\theta \wedge d\phi + \cos^2(\theta)\sin(\phi)\cos^2(\phi) d\phi \wedge d\theta \\
& = \sin^3(\phi) d\phi \wedge d\theta + \sin(\phi) \cos^2(\phi) d\phi \wedge d\theta \\
& =  \sin(\phi) d\phi \wedge d\theta.
\end{align*}
Integrating,
\begin{align*}
\int\limits_0^{2\pi}\int\limits_0^{\pi} \sin(\phi) d\phi d\theta & = \int\limits_0^{2\pi}[-\cos(\phi)]_0^\pi d\theta \\
& = 4\pi.
\end{align*} 
Since the Hopf map is classified by $1 \in H^2(S^2;\mathbb{Z})$, in de Rham cohomology this corresponds to the normalised volume form $\omega^\prime/4\pi$. Thus we take an Ehresmann connection $\zeta$ on the Hopf map with curvature
\[
\omega = \frac{\omega^\prime}{2}.
\]
so that $\omega/2\pi$ is an integral closed $2$-form representing the generator of $H^2(S^2;\mathbb{Z})$. Notice that the form $\omega/2\pi$ represents the image of the generator of the Hopf map in real cohomology. \\
We now define the connection $1$-form $\zeta$ on the Hopf map as follows\footnote{See \cite[on page 269]{MR3012377}. See also Remark \ref{connection_on_Hopf_just}.}:
\begin{equation}\label{connonHopf}
\zeta = -ydx + xdy -wdz + zdw,
\end{equation}
where $(x,y,z,w)$ denote the standard coordinates on $S^3 \subset \mathbb{R}^4$. \\
Let $\widetilde{SO(3)}$ denote the central extension of $SO(3)$ by $U(1)$ obtained via Proposition \ref{firstprop}. An equivalent construction of $\widetilde{SO(3)}$ is given by Proposition \ref{secondprop}. In fact, for a general compact Lie group $G$, the two constructions agree; see \cite[Page 54]{MR900587} for details. \\
For each $A \in SO(3)$, consider the diffeomorphism $A:S^2 \longrightarrow S^2$ given by left multiplication by $A$. Pulling back the Hopf map $\eta:S^3 \longrightarrow S^2$ along $A$ yields a principal $U(1)$-bundle
$$A^* \eta:A^* S^3 \longrightarrow S^2$$
equipped with the pulled-back connection $A^\ast \zeta$, whose curvature is again $\omega$. \\
By Proposition \ref{secondprop}, we may define $\widetilde{SO(3)}$ as the group of pairs $(A,\psi)$, where $A \in SO(3)$ and $\psi:S^3 \longrightarrow A^\ast S^3$ is a $U(1)$-bundle morphism. For each $A$, there is a circle’s worth of such morphisms $\psi$, arising from the fibrewise action of $U(1)$ and the connections on these bundles.
\[
\begin{tikzcd}
S^3 \arrow[rr,"\psi"] \arrow[dr,"\eta"'] & & A^\ast S^3 \arrow[dl] \\
& S^2 &
\end{tikzcd}
\]
Each such $\psi$ can be reinterpreted as a map $\phi:S^3 \longrightarrow S^3$ satisfying 
$$\phi^* \zeta = \zeta,$$
so that the diagram
\[
\begin{tikzcd}
S^3 \arrow[r,"\phi"] \arrow[d,"\eta"'] & S^3 \arrow[d,"\eta"] \\
S^2 \arrow[r,"A"'] & S^2
\end{tikzcd}
\]
commutes. \\
To see this equivalence, suppose we are given $\psi:S^3 \longrightarrow A^\ast S^3$. Consider the pullback diagram 
\[
\begin{tikzcd}
A^\ast S^3 \arrow[r,"\lambda"] \arrow[d,"\theta"'] & S^3 \arrow[d,"\eta"] \\
S^2 \arrow[r,"A"'] & S^2
\end{tikzcd}
\]
where $\lambda$ is the canonical projection. Define $\phi \coloneq \lambda \circ \psi$. Then
\begin{align*}
\phi^\ast\zeta & = \left(\lambda \circ \psi\right)^\ast\zeta \\
& = \psi^\ast \circ \lambda^\ast \zeta \\
& = \psi^\ast \zeta^\prime \\
& = \zeta.
\end{align*}
where $\zeta^\prime$ denotes the connection $1$-form on $A^\ast S^3$. Thus $\phi$ preserves the connection and exhibits the required compatibility with $A$. \\
Now, given a map $\phi:S^3 \longrightarrow S^3$ with the above properties, the universal property of pullbacks provides a map $\psi:S^3 \longrightarrow A^\ast S^3$. Concretely, we have the pullback diagram
\[
\begin{tikzcd}
S^3 \arrow[drr,bend left=45,"\phi"] \arrow[ddr,bend right=45,"\eta"'] \arrow[dr,dashed,"\phi"] & & \\
& A^\ast S^3 \arrow[r,"\lambda"] \arrow[d,"\theta"'] & S^3 \arrow[d,"\eta"] \\
& S^2 \arrow[r,"A"'] & S^2
\end{tikzcd}
\]
so that
\begin{align*}
\psi^\ast\zeta^\prime & = \psi^\ast\left(\lambda^\ast\zeta\right) \\
& = \left(\lambda \circ \psi\right)^\ast \zeta \\
& = \phi^\ast\zeta \\
& = \zeta.
\end{align*}
Fix the base-point $Y_N = (0,1,0) \in S^2$ and choose $a^\prime = \bigl(0,\tfrac{\sqrt{2}}{2},\tfrac{\sqrt{2}}{2},0\bigr) \in \eta^{-1}(Y_N)$ as a base-point of $S^3$. Different choices of base-points lead to equivalent constructions. After fixing these choices, a morphism $\psi$ with the above conditions is uniquely determined by $A \in SO(3)$ together with $\psi(a^\prime) \in S^3$. \\
Thus, as a topological space, $\widetilde{SO(3)}$ may be identified with the fibre product
\[
\begin{tikzcd}
\widetilde{SO(3)} \arrow[r] \arrow[d] & S^3 \arrow[d,"\eta"] \\
SO(3) \arrow[r,"\mu"'] & S^2
\end{tikzcd}
\]
where $\mu(A) = A\eta(a^\prime) = AY_N$. Equivalently,
\[
\widetilde{SO(3)} = \left\{(A,b^\prime) \in SO(3) \times S^3 ~ \big\vert ~ \eta(b^\prime)=AY_N\right\}.
\]
Central extensions of $SO(3)$ by $U(1)$ are classified by the Segal–Mitchison\footnote{See for example, \cite[Section 4]{MR2800361}.}cohomology group $H^2_{SM}\bigl(SO(3);U(1)\bigr)$. \\
By \cite[Corollary 97]{MR2800361},
\begin{align*}
H^2_{SM}\bigl(SO(3);U(1)\bigr) & \cong H^3_{SM}\bigl(SO(3);\mathbb{Z}\bigr) \\
&\cong H^3\bigl(BSO(3);\mathbb{Z}\bigr) \\
& \cong \mathbb{Z}_2.
\end{align*}
Hence there exist exactly two isomorphism classes of principal $U(1)$-bundles over $SO(3)$: the trivial bundle $SO(3)\times U(1)$ and the non-trivial extension, which is $Spin^c(3)$. \\
There are two ways to show that $SO(3)\times U(1)$ is not the same as $\widetilde{SO(3)}$: \\
$(1)$ they are not homeomorphic as topological spaces, and \\
$(2)$ they are not homomorphic as groups. \\
For ($1$), consider the fibre sequence
\[
0 \longrightarrow U(1) \simeq SO(2) \longrightarrow  SO(3) \overset{\mu}\longrightarrow S^2 \longrightarrow 0.
\]
The Leray–Serre spectral sequence associated to this fibration (see, for example, \cite[Chapter 3 on page 177]{MR658304}) has second page
\[
E^{p,q}_2 = H^P\left(S^2,H^q\left(U(1);\mathbb{Z}\right)\right) \Longrightarrow H^{p+q}\left(SO(3);\mathbb{Z}\right).
\]
Therefore, the $E_2$-page has the form
\[
\begin{tikzpicture}
  \matrix (m) [matrix of math nodes,
    nodes in empty cells,nodes={minimum width=5ex,
    minimum height=5ex,outer sep=-5pt},
    column sep=1ex,row sep=1ex]{
                &      &     &     & \\
          1     &  \mathbb{Z} &  0  & \mathbb{Z} & \\
          0     &  \mathbb{Z}  & 0 &  \mathbb{Z}  & \\
    \quad\strut &   0  &  1  &  2  & \strut \\};
 \draw[-stealth] (m-2-2.south east) -- (m-3-4.north west);
\draw[thick,->] (m-4-1.east) -- (m-1-1.east) ;
\draw[thick,->] (m-4-1.north) -- (m-4-5.north) ;
\draw (2.2,-1.1) node[below] {p};
\draw (-1.5,1.9) node[left] {q};
\end{tikzpicture}
\]
and all other rows and columns vanish. The only non-trivial differential is
$$d_2^{0,1}:\mathbb{Z} \cong H^0(S^2;\mathbb{Z}) \longrightarrow H^2(S^2;\mathbb{Z}) \cong \mathbb{Z}.$$
From this, we obtain the exact sequence
\[
\cdots \longrightarrow 0 \cong H^1\bigl(SO(3);\mathbb{Z}\bigr) \longrightarrow \mathbb{Z} \cong H^0(S^2;\mathbb{Z}) \longrightarrow  H^2(S^2;\mathbb{Z}) \cong \mathbb{Z} \overset{\mu^*} \longrightarrow H^2\bigl(SO(3);\mathbb{Z}\bigr) \longrightarrow 0 \longrightarrow \cdots
\]
which is precisely the Gysin sequence associated to the fibration $SO(3)\overset{\mu}{\longrightarrow}S^2$ (see, for instance, \cite[Chapter 3 on page 178]{MR658304}). \\
Thus, the cohomology class of the principal $U(1)$-bundle $\widetilde{SO(3)}$ is given by the image of the Hopf class under $\mu^*$. This shows that $\widetilde{SO(3)}$ is not homeomorphic to the product $SO(3)\times U(1)$. \\
In case ($2$), we aim to show that the group $\widetilde{SO(3)}$ is not homomorphic to the group $SO(3) \times U(1)$. To do this, we examine the group structure of $\widetilde{SO(3)}$ as constructed in Proposition \ref{firstprop}. \\
First, recall the definition of the map
\begin{align}
C : LS^2 & \longrightarrow U(1) \nonumber \\
l ~~ & \mapsto e^{2\pi(\int\limits_\delta \omega)\mathsf{i}} \label{themapC}
\end{align}
where $LS^2$ denotes the loop space of $S^2$, and $\delta$ is a surface in $S^2$ with boundary $\partial \delta = l$. The map $C$ is well-defined, since if $\delta$ and $\delta'$ are two such surfaces with the same boundary, then $\int_\delta \omega - \int_{\delta'} \omega$ is an integer multiple of $2\pi$, making the exponential independent of the choice of $\delta$.
Using $C$, we may describe $\widetilde{SO(3)}$ as the set of equivalence classes of triples $(A,\gamma,z)$, where $A\in SO(3)$, $\gamma$ is a path from $Y_N=(0,1,0)$ to $AY_N$, and $z\in U(1)$. Two triples $(A,\gamma,z)$ and $(A',\gamma',z')$ are equivalent if: \\
$\bullet$ $A = A^\prime$, \\
$\bullet$ $z = C(\gamma * {\gamma^\prime}^{-1}) z^\prime$ \\
where $\gamma * \gamma'^{-1}$ denotes the concatenation of $\gamma$ with the reverse path $\gamma'^{-1}$. \\
The multiplication in $\widetilde{SO(3)}$ is given by
\[
(A,\gamma,z) \cdot (A^\prime,\gamma^\prime,z^\prime) \coloneq (AA^\prime,\gamma * A\gamma^\prime,zz^\prime).
\]
where $A\gamma'$ is the path from $AY_N$ to $AA'Y_N$ obtained by applying $A$ pointwise to $\gamma'$. Thus $\gamma * A\gamma'$ is a path from $Y_N$ to $AA'Y_N$. \\
To see that $\widetilde{SO(3)}$ is not homomorphic to $SO(3)\times U(1)$, it suffices to compare their finite subgroups. Indeed, in $\widetilde{SO(3)}$ one can exhibit non-trivial central extensions of finite subgroups of $SO(3)$. \\
As an example, consider the dihedral group $D_8$ embedded in $SO(3)$, consisting of the following eight matrices:
\[
I = \begin{bmatrix} 1 & 0 & 0 \\ 0 & 1 & 0 \\ 0 & 0 & 1  \end{bmatrix}, ~ A_1=\begin{bmatrix} 0 & -1 & 0 \\ 1 & 0 & 0 \\ 0 & 0 & 1  \end{bmatrix}, ~ A_2=\begin{bmatrix} -1 & 0 & 0 \\ 0 & -1 & 0 \\ 0 & 0 & 1  \end{bmatrix}, ~ A_3=\begin{bmatrix} 0 & 1 & 0 \\ -1 & 0 & 0 \\ 0 & 0 & 1  \end{bmatrix},
\]
\[
A_4=\begin{bmatrix} -1 & 0 & 0 \\ 0 & 1 & 0 \\ 0 & 0 & -1  \end{bmatrix}, ~ A_5=\begin{bmatrix} 0 & 1 & 0 \\ 1 & 0 & 0 \\ 0 & 0 & -1  \end{bmatrix}, ~ A_6=\begin{bmatrix} 1 & 0 & 0 \\ 0 & -1 & 0 \\ 0 & 0 & -1  \end{bmatrix}, ~ A_7=\begin{bmatrix} 0 & -1 & 0 \\ -1 & 0 & 0 \\ 0 & 0 & -1  \end{bmatrix}
\]
For consistency with existing literature, we denote the elements $I, A_1,\dots,A_7$ by
$$1, r, r^2, r^3, s, rs, r^2s, r^3s$$
with the following interpretations: \\
$\bullet$ $1$: the identity element, \\
$\bullet$ $r$: the rotation by $\frac{\pi}{2}$ (counterclockwise) in the $XY$-plane (of order 4), \\
$\bullet$ $r^2$: the rotation by $\pi$ (counterclockwise) in the $XY$-plane (of order 2), \\
$\bullet$ $r^3$: the rotation by $\frac{3\pi}{2}$ (counterclockwise) in the $XY$-plane (of order 4), \\
$\bullet$ $s$: the reflection across the $Y$-axis (of order 2), \\
$\bullet$ $rs$: the reflection across the line $Y=-X$ (of order 2), \\
$\bullet$ $r^2s$: the reflection across the $X$-axis (of order 2), \\
$\bullet$ $r^3s$: the reflection across the line $Y=X$ (of order 2). \\
Thus $D_8$ admits the familiar presentation
\[
D_8 = \langle r,s ~ \big\vert ~ r^4 = 1, s^2 = 1, srs^{-1} = r^{-1} \rangle.
\]
By the universal coefficient theorem, one computes
\[
H^2(D_8,U(1)) \cong \mathbb{Z}_2.
\]
Indeed, $H_1(D_8) \cong \mathbb{Z}_2$ and $H_2(D_8) \cong \mathbb{Z}_2$, see for example \cite[Example 6.8.4 on page 196 and Example 6.8.5 on page 197]{MR1269324}. \\
Therefore $D_8$ admits exactly two central extensions by $U(1)$: \\
$\bullet$ the trivial extension $D_8 \times U(1)$, which sits naturally inside $SO(3)\times U(1)$, and \\
$\bullet$ a non-trivial extension $\widetilde{D_8}$, realized as a subgroup of $\widetilde{SO(3)}$ constructed in Proposition \ref{firstprop}. \\
Our aim is to demonstrate that the subgroup $\widetilde{D_8}$ of $\widetilde{SO(3)}$, as obtained in Proposition \ref{firstprop}, is not homomorphic to the product group $D_8 \times U(1)$. \\
The binary dihedral group $BD_8$ of order $16$ is defined as follows \cite{MR1863996}
\begin{align*}
BD_8 & = \left\{e^{\mathsf{k}\frac{\pi l}{4}} ~ \big\vert ~ l = 0,1,2,\dots,7\right\} \cup ~ \mathsf{j} \cdot \left\{e^{\mathsf{k}\frac{\pi l}{4}} ~ \big\vert ~ l = 0,1,2,\dots,7\right\} \\
& = \left\{1,\frac{\sqrt{2}}{2} + \frac{\sqrt{2}}{2} \mathsf{k},\mathsf{k},-\frac{\sqrt{2}}{2} + \frac{\sqrt{2}}{2} \mathsf{k},-1,-\frac{\sqrt{2}}{2} - \frac{\sqrt{2}}{2} \mathsf{k},-\mathsf{k},\frac{\sqrt{2}}{2} - \frac{\sqrt{2}}{2} \mathsf{k}\right\} \\
& \cup \left\{\mathsf{j},\frac{\sqrt{2}}{2} \mathsf{j} + \frac{\sqrt{2}}{2} \mathsf{i},\mathsf{i},-\frac{\sqrt{2}}{2} \mathsf{j} + \frac{\sqrt{2}}{2} \mathsf{i},-\mathsf{j},-\frac{\sqrt{2}}{2} \mathsf{j} - \frac{\sqrt{2}}{2} \mathsf{i},-\mathsf{i},\frac{\sqrt{2}}{2} \mathsf{j} - \frac{\sqrt{2}}{2} \mathsf{i}\right\}
\end{align*}
A standard group presentation for $BD_8$ is
\[
BD_8 = \langle x,y ~ \big\vert ~ x^8 = 1, y^4 = 1, y^{-1}xy=x^{-1} \rangle.
\]
For reference, a $2$-cocycle\footnote{For an introduction to group cohomology, see \cite[Section 1 in Chapter 3]{MR672956}.}associated with the binary dihedral group is provided in Table \ref{bindihco}.
\begin{table}[H]
\centering
\begin{tabular}{ c | c | c | c | c | c | c | c | c }
& $1$ & $r$ & $r^2$ & $r^3$ & $s$ & $rs$ & $r^2s$ & $r^3s$ \\
\hline
$1$ & $1$ & $1$ & $1$ & $1$ & $1$ & $1$ & $1$ & $1$ \\
\hline
$r$ & $1$ & $1$ & $1$ & $-1$ & $1$ & $1$ & $1$ & $-1$ \\
\hline
$r^2$ & $1$ & $1$ & $-1$ & $-1$ & $1$ & $1$ & $-1$ & $-1$ \\
\hline
$r^3$ & $1$ & $-1$ & $-1$ & $-1$ & $1$ & $-1$ & $-1$ & $-1$ \\
\hline
$s$ & $1$ & $-1$ & $-1$ & $-1$ & $-1$ & $1$ & $1$ & $1$ \\
\hline
$rs$ & $1$ & $1$ & $-1$ & $-1$ & $-1$ & $-1$ & $1$ & $1$ \\
\hline
$r^2s$ & $1$ & $1$ & $1$ & $-1$ & $-1$ & $-1$ & $-1$ & $1$ \\
\hline
$r^3s$ & $1$ & $1$ & $1$ & $1$ & $-1$ & $-1$ & $-1$ & $-1$
\end{tabular}
\caption{A $2$-cocycle for binary dihedral group $BD_8$}
\label{bindihco}
\end{table}
We adopt a multiplicative convention for the group $\mathbb{Z}_2$, namely 
\[
\mathbb{Z}_2 = \left\{\pm{1}\right\},
\]
as this formulation is more natural within our framework. \\
To obtain the relevant $2$-cocycle, we define a set-theoretic section
\[
sec:D_8 \longrightarrow BD_8
\]
defined by
\begin{align*}
sec:1 & \longmapsto 1, \qquad
r  \longmapsto e^{\mathsf{k}\frac{\pi}{4}}, \\
r^2 & \longmapsto e^{\mathsf{k}\frac{\pi}{2}}, \qquad
r^3  \longmapsto e^{\mathsf{k}\frac{3\pi}{4}}, \\
s & \longmapsto \mathsf{j}, \qquad
rs  \longmapsto \mathsf{j}e^{\mathsf{k}\frac{\pi}{4}}, \\
r^2s & \longmapsto \mathsf{j}e^{\mathsf{k}\frac{\pi}{2}} \qquad
r^3s  \longmapsto \mathsf{j}e^{\mathsf{k}\frac{3\pi}{4}}.
\end{align*}
Using the section $sec$, we define a $2$-cocycle
\[
c:D_8 \times D_8 \longrightarrow \mathbb{Z}_2
\]
by the formula
$$c(A,B) \coloneq \frac{sec(A) \times sec(B)}{sec(AB)}.$$
This map $c$ satisfies the $2$-cocycle condition
\begin{align}\label{dihco}
\frac{A \cdot c(A_2,A_3) \times c(A_1,A_2A_3)}{c(A_1A_2,A_3) \times c(A_1,A_2)}.
\end{align}
for all $A_1$, $A_2$ and $A_3$ in $D_8$. \\
In the context of central extensions, the action of $D_8$ on $\mathbb{Z}_2$ is trivial. Accordingly, Equation (\ref{dihco}) reduces to
\begin{align*}
\frac{c(A_2,A_3) \times c(A_1,A_2A_3)}{c(A_1A_2,A_3) \times c(A_1,A_2)}.
\end{align*}
\noindent
Finally, a $2$-cocycle
\[
c^\prime:D_8 \times D_8 \longrightarrow U(1),
\]
corresponding to the group $\widetilde{D_8}$, is provided in Table \ref{tildedihco}.
\begin{table}[H]
\centering
\begin{tabular}{ c | c | c | c | c | c | c | c | c }
& $1$ & $r$ & $r^2$ & $r^3$ & $s$ & $rs$ & $r^2s$ & $r^3s$ \\
\hline
$1$ & $1$ & $1$ & $1$ & $1$ & $1$ & $1$ & $1$ & $1$ \\
\hline
$r$ & $1$ & $1$ & $1$ & $-1$ & $1$ & $1$ & $1$ & $-1$ \\
\hline
$r^2$ & $1$ & $1$ & $-1$ & $-1$ & $1$ & $1$ & $-1$ & $-1$ \\
\hline
$r^3$ & $1$ & $-1$ & $-1$ & $-1$ & $1$ & $-1$ & $-1$ & $-1$ \\
\hline
$s$ & $1$ & $-1$ & $-1$ & $-1$ & $1$ & $-1$ & $-1$ & $-1$ \\
\hline
$rs$ & $1$ & $1$ & $-1$ & $-1$ & $1$ & $1$ & $-1$ & $-1$ \\
\hline
$r^2s$ & $1$ & $1$ & $1$ & $-1$ & $1$ & $1$ & $1$ & $-1$ \\
\hline
$r^3s$ & $1$ & $1$ & $1$ & $1$ & $1$ & $1$ & $1$ & $1$
\end{tabular}
\caption{The $2$-cocycle for $\widetilde{D_8}$ (base-point is taken to be $Y_N \in S^2$)}
\label{tildedihco}
\end{table}
We recall that the group $\widetilde{D_8}$ arises from Proposition \ref{firstprop}, with the base-point in $S^2$ chosen to be
\[
Y_N = (0,1,0).
\]
To analyse its cohomological structure, consider the commutative diagram
\[
\begin{tikzcd}[scale=0.95,transform shape]
0 \arrow [r] & \mathsf{Ext}^1\bigl(H_1(D_8),\mathbb{Z}_2\bigr) \arrow[r,"i"] \arrow[d] & H^2(D_8,\mathbb{Z}_2) \arrow[r,"p"] \arrow[d,"\bar{f}"'] & \mathsf{Hom}\bigl(H_2(D_8),\mathbb{Z}_2\bigr) \arrow[r] \arrow[d,"\cong"] & 0 \\
0 \arrow[r] & 0 \cong \mathsf{Ext}^1\bigl(H_1(D_8),U(1)\bigr) \arrow[r,hook] & H^2\bigl(D_8,U(1)\bigr) \arrow[r,"\cong"'] & \mathsf{Hom}\bigl(H_2(D_8),U(1)\bigr) \arrow[r] & 0
\end{tikzcd}
\]
arising from the universal coefficient theorem, together with the non-trivial group homomorphism 
$$f:\mathbb{Z}_2 \longrightarrow U(1).$$
From this diagram it follows that 
$$\mathsf{Ker}(p) \cong \mathsf{Ker}(\bar{f}).$$
Recall that the group $H_1(D_8)$ is isomorphic to the Klein four-group:
\[
H_1(D_8) \cong K_4 \cong \mathbb{Z}_2 \oplus \mathbb{Z}_2.
\]
Consider the following presentation of the Klein four-group:
$$K_4 = \langle a, b ~ \big\vert ~ a^2 = b^2 = (ab)^2 = 1\rangle.$$
Equivalently, the group consists of four elements,
$$\left\{1,a,b,c\right\},$$
with multiplication given by the Cayley table:
\begin{table}[H]
\centering
\begin{tabular}{ c | c   c   c   c}
& $1$ & $a$ & $b$ & $c$ \\
\hline
$1$ & $1$ & $a$ & $b$ & $c$ \\
$a$ & $a$ & $1$ & $c$ & $b$ \\
$b$ & $b$ & $c$ & $1$ & $a$ \\
$c$ & $c$ & $b$ & $a$ & $1$
\end{tabular}
\end{table}
We now define four group homomorphisms $$f_1, f_2, f_3, f_4:K_4 \longrightarrow \mathbb{Z}_2$$ specified by their values on the generators $a$ and $b$:
\[
f_1:a \longmapsto 1, ~~ b \longmapsto 1, ~~~~~ f_2:a  \longmapsto 1, ~~ b \longmapsto -1,
\]
\[
f_3:a  \longmapsto -1, ~~ b \longmapsto 1, ~~~~~ f_4:a \longmapsto -1, ~~ b \longmapsto -1.
\]
hese homomorphisms give rise to four distinct $2$-cocycles
\[
c_1, c_2, c_3, c_4: K_4 \times K_4 \longrightarrow \mathbb{Z}_2
\]
each corresponding to an abelian extension of $K_4$ by $\mathbb{Z}_2$. Collectively, these extensions form the group $$\mathsf{Ext}^1(K_4,\mathbb{Z}_2).$$
The explicit forms of the cocycles are presented in Table \ref{k4abext}.
\begin{table}[H]
\centering
\begin{tabular}{ c | c   c   c   c}
& $1$ & $a$ & $b$ & $c$ \\
\hline
$1$ & $1$ & $1$ & $1$ & $1$ \\
$a$ & $1$ & $1$ & $1$ & $1$ \\
$b$ & $1$ & $1$ & $1$ & $1$ \\
$c$ & $1$ & $1$ & $1$ & $1$
\end{tabular} ~~~~~
\begin{tabular}{ c | c   c   c   c}
& $1$ & $a$ & $b$ & $c$ \\
\hline
$1$ & $1$ & $1$ & $1$ & $1$ \\
$a$ & $1$ & $1$ & $1$ & $1$ \\
$b$ & $1$ & $1$ & $-1$ & $-1$ \\
$c$ & $1$ & $1$ & $-1$ & $-1$
\end{tabular} ~~~~~
\begin{tabular}{ c | c   c   c   c}
& $1$ & $a$ & $b$ & $c$ \\
\hline
$1$ & $1$ & $1$ & $1$ & $1$ \\
$a$ & $1$ & $-1$ & $1$ & $-1$ \\
$b$ & $1$ & $1$ & $1$ & $1$ \\
$c$ & $1$ & $-1$ & $1$ & $-1$
\end{tabular} ~~~~~
\begin{tabular}{ c | c   c   c   c}
& $1$ & $a$ & $b$ & $c$ \\
\hline
$1$ & $1$ & $1$ & $1$ & $1$ \\
$a$ & $1$ & $-1$ & $-1$ & $1$ \\
$b$ & $1$ & $-1$ & $-1$ & $1$ \\
$c$ & $1$ & $1$ & $1$ & $1$
\end{tabular}
\caption{The $2$-cocycles $c_1$, $c_2$, $c_3$ and $c_4$}
\label{k4abext}
\end{table}
The quotient map $$q:D_8 \longrightarrow K_4$$ is defined by
\begin{align*}
q:1 & \longmapsto 1, \qquad
r \longmapsto a, \\
r^2 & \longmapsto 1, \qquad
r^3 \longmapsto a, \\
s & \longmapsto b, \qquad
rs \longmapsto c, \\
r^2s & \longmapsto b, \qquad
r^3s \longmapsto c.
\end{align*}
Hence, the kernel of $q$ is
$$\mathsf{Ker}(1) = \left\{1, r^2\right\}.$$
By composing each of the cocycles $c_1$, $c_2$, $c_3$ and $c_4$ with $q \times q$, we obtain four $2$-cocyles
$$c^\prime_1 = c_1 \circ (q \times q):D_8 \times D_8 \longrightarrow \mathbb{Z}_2,$$
$$c^\prime_2 = c_2 \circ (q \times q):D_8 \times D_8 \longrightarrow \mathbb{Z}_2,$$
$$c^\prime_3 = c_3 \circ (q \times q):D_8 \times D_8 \longrightarrow \mathbb{Z}_2,$$
$$c^\prime_4 = c_4 \circ (q \times q):D_8 \times D_8 \longrightarrow \mathbb{Z}_2.$$
These four cocycles lie in $$\mathsf{Im}(i) \cong \mathsf{Ker}(p),$$
and they are not cohomologous to the $2$-cocycle $c^\prime$ associated with the binary dihedral group $BD_8$.
These four $2$-cocycles are shown in Tables \ref{comingk4-1} and \ref{comingk4-2}.
\begin{table}[H]
\centering
\begin{tabular}{ c | c   c   c   c   c   c   c   c}
& $1$ & $r$ & $r^2$ & $r^3$ & $s$ & $rs$ & $r^2s$ & $r^3s$ \\
\hline
$1$ & $1$ & $1$ & $1$ & $1$ & $1$ & $1$ & $1$ & $1$ \\
$r$ & $1$ & $1$ & $1$ & $1$ & $1$ & $1$ & $1$ & $1$ \\
$r^2$ & $1$ & $1$ & $1$ & $1$ & $1$ & $1$ & $1$ & $1$ \\
$r^3$ & $1$ & $1$ & $1$ & $1$ & $1$ & $1$ & $1$ & $1$ \\
$s$ & $1$ & $1$ & $1$ & $1$ & $1$ & $1$ & $1$ & $1$ \\
$rs$ & $1$ & $1$ & $1$ & $1$ & $1$ & $1$ & $1$ & $1$ \\
$r^2s$ & $1$ & $1$ & $1$ & $1$ & $1$ & $1$ & $1$ & $1$ \\
$r^3s$ & $1$ & $1$ & $1$ & $1$ & $1$ & $1$ & $1$ & $1$
\end{tabular} ~~~
\begin{tabular}{ c | c   c   c   c   c   c   c   c}
& $1$ & $r$ & $r^2$ & $r^3$ & $s$ & $rs$ & $r^2s$ & $r^3s$ \\
\hline
$1$ & $1$ & $1$ & $1$ & $1$ & $1$ & $1$ & $1$ & $1$ \\
$r$ & $1$ & $1$ & $1$ & $1$ & $1$ & $1$ & $1$ & $1$ \\
$r^2$ & $1$ & $1$ & $1$ & $1$ & $1$ & $1$ & $1$ & $1$ \\
$r^3$ & $1$ & $1$ & $1$ & $1$ & $1$ & $1$ & $1$ & $1$ \\
$s$ & $1$ & $1$ & $1$ & $1$ & $-1$ & $-1$ & $-1$ & $-1$ \\
$rs$ & $1$ & $1$ & $1$ & $1$ & $-1$ & $-1$ & $-1$ & $-1$ \\
$r^2s$ & $1$ & $1$ & $1$ & $1$ & $-1$ & $-1$ & $-1$ & $-1$ \\
$r^3s$ & $1$ & $1$ & $1$ & $1$ & $-1$ & $-1$ & $-1$ & $-1$
\end{tabular}
\caption{The $2$-cocycles $c^\prime_1$ and $c^\prime_2$}
\label{comingk4-1}
\end{table}
\begin{table}[H]
\centering
\begin{tabular}{ c | c   c   c   c   c   c   c   c}
& $1$ & $r$ & $r^2$ & $r^3$ & $s$ & $rs$ & $r^2s$ & $r^3s$ \\
\hline
$1$ & $1$ & $1$ & $1$ & $1$ & $1$ & $1$ & $1$ & $1$ \\
$r$ & $1$ & $-1$ & $1$ & $-1$ & $1$ & $-1$ & $1$ & $-1$ \\
$r^2$ & $1$ & $1$ & $1$ & $1$ & $1$ & $1$ & $1$ & $1$ \\
$r^3$ & $1$ & $-1$ & $1$ & $-1$ & $1$ & $-1$ & $1$ & $-1$ \\
$s$ & $1$ & $1$ & $1$ & $1$ & $1$ & $1$ & $1$ & $1$ \\
$rs$ & $1$ & $-1$ & $1$ & $-1$ & $1$ & $-1$ & $1$ & $-1$ \\
$r^2s$ & $1$ & $1$ & $1$ & $1$ & $1$ & $1$ & $1$ & $1$ \\
$r^3s$ & $1$ & $-1$ & $1$ & $-1$ & $1$ & $-1$ & $1$ & $-1$
\end{tabular} ~~~
\begin{tabular}{ c | c   c   c   c   c   c   c   c}
& $1$ & $r$ & $r^2$ & $r^3$ & $s$ & $rs$ & $r^2s$ & $r^3s$ \\
\hline
$1$ & $1$ & $1$ & $1$ & $1$ & $1$ & $1$ & $1$ & $1$ \\
$r$ & $1$ & $-1$ & $1$ & $-1$ & $-1$ & $1$ & $-1$ & $1$ \\
$r^2$ & $1$ & $1$ & $1$ & $1$ & $1$ & $1$ & $1$ & $1$ \\
$r^3$ & $1$ & $-1$ & $1$ & $-1$ & $-1$ & $1$ & $-1$ & $1$ \\
$s$ & $1$ & $-1$ & $1$ & $-1$ & $-1$ & $1$ & $-1$ & $1$ \\
$rs$ & $1$ & $1$ & $1$ & $1$ & $1$ & $1$ & $1$ & $1$ \\
$r^2s$ & $1$ & $-1$ & $1$ & $-1$ & $-1$ & $1$ & $-1$ & $1$ \\
$r^3s$ & $1$ & $1$ & $1$ & $1$ & $1$ & $1$ & $1$ & $1$
\end{tabular}
\caption{The $2$-cocycles $c^\prime_3$ and $c^\prime_4$}
\label{comingk4-2}
\end{table}
By combining the $2$-cocycles $c^\prime_1$, $c^\prime_2$, $c^\prime_3$ and $c^\prime_4$ with the $2$-cocycle $c^\prime$, we obtain four additional extensions of $D_8$ by $\mathbb{Z}_2$. \\
The corresponding $2$-cocycles associated with these extensions are given by:
\begin{table}[H]
\centering
\begin{tabular}{ c | c   c   c   c   c   c   c   c}
& $1$ & $r$ & $r^2$ & $r^3$ & $s$ & $rs$ & $r^2s$ & $r^3s$ \\
\hline
$1$ & $1$ & $1$ & $1$ & $1$ & $1$ & $1$ & $1$ & $1$ \\
$r$ & $1$ & $1$ & $1$ & $-1$ & $1$ & $1$ & $1$ & $-1$ \\
$r^2$ & $1$ & $1$ & $-1$ & $-1$ & $1$ & $1$ & $-1$ & $-1$ \\
$r^3$ & $1$ & $-1$ & $-1$ & $-1$ & $1$ & $-1$ & $-1$ & $-1$ \\
$s$ & $1$ & $-1$ & $-1$ & $-1$ & $-1$ & $1$ & $1$ & $1$ \\
$rs$ & $1$ & $1$ & $-1$ & $-1$ & $-1$ & $-1$ & $1$ & $1$ \\
$r^2s$ & $1$ & $1$ & $1$ & $-1$ & $-1$ & $-1$ & $-1$ & $1$ \\
$r^3s$ & $1$ & $1$ & $1$ & $1$ & $-1$ & $-1$ & $-1$ & $-1$
\end{tabular} ~~~
\begin{tabular}{ c | c   c   c   c   c   c   c   c}
& $1$ & $r$ & $r^2$ & $r^3$ & $s$ & $rs$ & $r^2s$ & $r^3s$ \\
\hline
$1$ & $1$ & $1$ & $1$ & $1$ & $1$ & $1$ & $1$ & $1$ \\
$r$ & $1$ & $1$ & $1$ & $-1$ & $1$ & $1$ & $1$ & $-1$ \\
$r^2$ & $1$ & $1$ & $-1$ & $-1$ & $1$ & $1$ & $-1$ & $-1$ \\
$r^3$ & $1$ & $-1$ & $-1$ & $-1$ & $1$ & $-1$ & $-1$ & $-1$ \\
$s$ & $1$ & $-1$ & $-1$ & $-1$ & $1$ & $-1$ & $-1$ & $-1$ \\
$rs$ & $1$ & $1$ & $-1$ & $-1$ & $1$ & $1$ & $-1$ & $-1$ \\
$r^2s$ & $1$ & $1$ & $1$ & $-1$ & $1$ & $1$ & $1$ & $-1$ \\
$r^3s$ & $1$ & $1$ & $1$ & $1$ & $1$ & $1$ & $1$ & $1$
\end{tabular}
\end{table}
\begin{table}[H]
\centering
\begin{tabular}{ c | c   c   c   c   c   c   c   c}
& $1$ & $r$ & $r^2$ & $r^3$ & $s$ & $rs$ & $r^2s$ & $r^3s$ \\
\hline
$1$ & $1$ & $1$ & $1$ & $1$ & $1$ & $1$ & $1$ & $1$ \\
$r$ & $1$ & $-1$ & $1$ & $1$ & $1$ & $-1$ & $1$ & $1$ \\
$r^2$ & $1$ & $1$ & $-1$ & $-1$ & $1$ & $1$ & $-1$ & $-1$ \\
$r^3$ & $1$ & $1$ & $-1$ & $1$ & $1$ & $1$ & $-1$ & $1$ \\
$s$ & $1$ & $-1$ & $-1$ & $-1$ & $-1$ & $1$ & $1$ & $1$ \\
$rs$ & $1$ & $-1$ & $-1$ & $1$ & $-1$ & $1$ & $1$ & $-1$ \\
$r^2s$ & $1$ & $1$ & $1$ & $-1$ & $-1$ & $-1$ & $-1$ & $1$ \\
$r^3s$ & $1$ & $-1$ & $1$ & $-1$ & $-1$ & $1$ & $-1$ & $1$
\end{tabular} ~~~
\begin{tabular}{ c | c   c   c   c   c   c   c   c}
& $1$ & $r$ & $r^2$ & $r^3$ & $s$ & $rs$ & $r^2s$ & $r^3s$ \\
\hline
$1$ & $1$ & $1$ & $1$ & $1$ & $1$ & $1$ & $1$ & $1$ \\
$r$ & $1$ & $-1$ & $1$ & $1$ & $-1$ & $1$ & $-1$ & $-1$ \\
$r^2$ & $1$ & $1$ & $-1$ & $-1$ & $1$ & $1$ & $-1$ & $-1$ \\
$r^3$ & $1$ & $1$ & $-1$ & $1$ & $-1$ & $-1$ & $1$ & $-1$ \\
$s$ & $1$ & $1$ & $-1$ & $1$ & $1$ & $1$ & $-1$ & $1$ \\
$rs$ & $1$ & $1$ & $-1$ & $-1$ & $-1$ & $-1$ & $1$ & $1$ \\
$r^2s$ & $1$ & $-1$ & $1$ & $1$ & $1$ & $-1$ & $1$ & $1$ \\
$r^3s$ & $1$ & $1$ & $1$ & $1$ & $-1$ & $-1$ & $-1$ & $-1$
\end{tabular}
\end{table}
By comparing the four cocycle tables constructed above with the cocycle table for $\widetilde{D_8}$ (Table \ref{tildedihco}), we observe that the cocycle corresponding to $\widetilde{D_8}$ coincides with the second table. \\
Consequently, the $2$-cocycle $$c^\prime:D_8 \times D_8 \longrightarrow U(1)$$
is non-trivial in $H^2\bigl(D_8,U(1)\bigr)$. It follows that the group $\widetilde{D_8}$ cannot be homomorphic to the product space $D_8 \times U(1)$.
\end{const}
\begin{lem}
The connection $\zeta$ defined in Equation (\ref{connonHopf}) s invariant under multiplication by elements of $S^3$.
\end{lem}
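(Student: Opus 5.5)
The plan is to rewrite $\zeta$ intrinsically in quaternionic terms, so that invariance becomes a one-line consequence of the fact that multiplication by a unit quaternion is an isometry of $\mathbb{R}^4 \cong \mathbb{H}$. Write a point of $S^3$ as $q = x\mathsf{i}+y\mathsf{j}+z\mathsf{k}+w$, following the Convention in the introduction. Identify a tangent vector $v \in T_qS^3$ with a quaternion orthogonal to $q$. Let $\langle\cdot,\cdot\rangle$ denote the Euclidean inner product on $\mathbb{H}=\mathbb{R}^4$, so that $dq = dx\,\mathsf{i}+dy\,\mathsf{j}+dz\,\mathsf{k}+dw$.

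\textbf{Step 1.} Compute the product
\[
q\mathsf{k} = y\mathsf{i} - x\mathsf{j} + w\mathsf{k} - z,
\]
using $\mathsf{i}\mathsf{k}=-\mathsf{j}$, $\mathsf{j}\mathsf{k}=\mathsf{i}$ and $\mathsf{k}^2=-1$. Pairing this with $dq$ gives $y\,dx - x\,dy + w\,dz - z\,dw$. Comparing with Equation (\ref{connonHopf}), this yields the identity
\[
\zeta_q(v) = -\langle v, q\mathsf{k}\rangle .
\]
So $\zeta$ is, up to sign, the form dual to the vector field $q \mapsto q\mathsf{k}$. This vector field generates the right $U(1)$-action $q \mapsto q\,e^{t\mathsf{k}}$, which is consistent with the fibre parametrisation $z=\cos t+\sin t\,\mathsf{k}$ used in Remark \ref{spacey2andl}. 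As a sanity check, the analogous computation with $\mathsf{k}q$ gives $-y\,dx + x\,dy + w\,dz - z\,dw \neq \zeta$. This shows that the relevant multiplication is on the \emph{left}, and I will state and prove the lemma for the left action $L_p(q)=pq$ with $p\in S^3$.

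\textbf{Step 2.} For $p \in S^3$, the map $L_p$ is $\mathbb{R}$-linear, so $(L_p)_\ast v = pv$. Therefore
\[
(L_p^\ast\zeta)_q(v) = \zeta_{pq}(pv) = -\langle pv, pq\mathsf{k}\rangle .
\]
Since $|pu| = |p||u| = |u|$ for every $u \in \mathbb{H}$, the map $L_p$ is orthogonal. Hence $\langle pv, pq\mathsf{k}\rangle = \langle v, q\mathsf{k}\rangle$, and so $L_p^\ast\zeta = \zeta$.

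\textbf{Main obstacle.} The only real content is the sign and side bookkeeping in Step 1: one must check each of the four terms of $\zeta$ against the quaternion multiplication table, and confirm that the generating field is right multiplication by $\mathsf{k}$ rather than left. If a sign discrepancy appears, the fallback is to use $\mathsf{i}$ or $\mathsf{j}$ instead, or to replace $\langle\cdot,\cdot\rangle$ by the real part of $\bar{v}\,q\mathsf{k}$, and then redo the identity. The invariance argument in Step 2 is unchanged in any such variant. It can also be confirmed directly on a few examples, for instance $p=\mathsf{i}$, where one verifies by substitution that $x\mapsto -w$, $w\mapsto x$, $y\mapsto -z$, $z\mapsto y$ preserves $\zeta$.
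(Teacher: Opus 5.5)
Your argument is correct, and it takes a different route from the paper. The paper works in coordinates. It writes out the product $(x\mathsf{i}+y\mathsf{j}+z\mathsf{k}+w)\cdot(x_0\mathsf{i}+y_0\mathsf{j}+z_0\mathsf{k}+w_0)$ and substitutes the new coordinates into $\zeta$. It then collects only the $dx$-coefficient, claims it equals $-y$, and says the $dy,dz,dw$ coefficients are similar. You instead identify $\zeta_q(v)=-\langle v,q\mathsf{k}\rangle$ as the metric dual of the vertical field $q\mapsto q\mathsf{k}$. Invariance then follows in one line, because $L_p$ is a linear isometry of $\mathbb{H}$ that commutes with right multiplication. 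This handles all four components at once. It also explains why the statement holds: left multiplication by $S^3$ commutes with the right $U(1)$-action whose orbits are the fibres of $\eta$.

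The choice of side is not cosmetic. The paper's product puts the fixed element $p$ on the right, so it is computing $R_p^\ast\zeta$. Your Step~1 gives $(R_p^\ast\zeta)_q(v)=-\langle v,q\,p\mathsf{k}\bar p\rangle$, which equals $\zeta$ only when $p$ commutes with $\mathsf{k}$; for example $R_{\mathsf{i}}^\ast\zeta=-\zeta$. Doing the paper's own substitution correctly gives the $dx$-coefficient $y(x_0^2+y_0^2-z_0^2-w_0^2)+2z(y_0z_0-x_0w_0)-2w(x_0z_0+y_0w_0)$, not $-y$. Also, the paper's factor for $dw'$ should end in $+w_0\,dw$. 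So your left-multiplication version is the correct reading of the lemma. It is also the version relevant to Construction~\ref{symmofhopf}, where rotations of $S^2$ are lifted via the left action of $S^3=Spin(3)$.

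One correction to your optional sanity check. The substitution $x\mapsto -w$, $y\mapsto -z$, $z\mapsto y$, $w\mapsto x$ is right multiplication by $-\mathsf{i}$, and it sends $\zeta$ to $-\zeta$. Left multiplication by $\mathsf{i}$ is $(x,y,z,w)\mapsto(w,-z,y,-x)$, and that one does preserve $\zeta$.
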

\begin{proof}
Let $(x_0,y_0,z_0,w_0) \in S^3$. Identifying $S^3$ with the group of unit quaternions, the product of $(x,y,z,w)$ with $(x_0,y_0,z_0,w_0)$ is
\begin{align*}
(x\mathsf{i}+y\mathsf{j}+z\mathsf{k}+w) \cdot (x_0\mathsf{i}+y_0\mathsf{j}+z_0\mathsf{k}+w_0) & = -xx_0 + xy_0\mathsf{k}-xz_0\mathsf{j}+xw_0\mathsf{i} \\
& - yx_0\mathsf{k} - yy_0 + yz_0\mathsf{i} + yw_0\mathsf{j} \\
& + zx_0\mathsf{j} - zy_0\mathsf{i} -zz_0 +zw_0\mathsf{k} \\
& + wx_0\mathsf{i} +wy_0\mathsf{j} + wz_0\mathsf{k} + ww_0 \\
& = (xw_0+yz_0-zy_0+wx_0,-xz_0+yw_0+zx_0+wy_0, \\
& xy_0-yx_0+zw_0+wz_0,-xx_0-yy_0-zz_0+ww_0).
\end{align*}
Accordingly, the connection one-form $\zeta$ becomes
\begin{align*}
& - (-xz_0+yw_0+zx_0+wy_0)(w_0dx+z_0dy-y_0dz+x_0dw) \\ & + (xw_0+yz_0-zy_0+wx_0)(-z_0dx+w_0dy+x_0dz+y_0dw) \\ & - (-xx_0-yy_0-zz_0+ww_0)(y_0dx-x_0dy+w_0dz+z_0dw) \\ & + (xy_0-yx_0+zw_0+wz_0)(-x_0dx-y_0dy-z_0dz-w_0dw).
\end{align*}
Collecting coefficients of $dx$, we find
\begin{align*}
& -xx_0y_0-yx_0^2-zx_0w_0+wx_0z_0+xx_0y_0-yy_0^2-zy_0z_0-wy_0w_0 \\ 
& -xz_0w_0-yz_0^2+zy_0z_0-wx_0z_0+xz_0w_0-yw_0^2+zx_0w_0+wy_0w_0 \\
& = -y.
\end{align*}
The coefficients of $dy$, $dz$ and $dw$ can be verified by similar calculations. Thus, the form $\zeta$ is preserved under multiplication by elements of $S^3$.
\end{proof}
\begin{rem}\label{connection_on_Hopf_just}
The connection $\zeta$ on the Hopf map $\eta:S^3 \longrightarrow S^2$ is well known in the literature; see for example, \cite[on page 269]{MR3012377}. Here, we justify why $\zeta$ arises naturally as a connection on $\eta$.
\\
In Construction \ref{cathopf}, we took the six-open cover $\left\{V_1, V_2, \cdots, V_6\right\}$ of $S^2$. On $S^2$, consider the $2$-form
\[
\omega = \frac{1}{2}\left(XdY \wedge dZ + YdZ \wedge dX + ZdX \wedge dY\right).
\]
Define a map
\[
f:\mathbb{R}^2 \longrightarrow S^2, ~~~~~ (X,Y) \longmapsto \left(X,Y,\sqrt{1 - X^2 - Y^2}\right).
\]
The image of $f$ is precisely the upper hemisphere of $S^2$, i.e. the region with $Z \geq 0$. \\
Pulling back the restriction of $\omega$ on hemisphere along the map $f$, we obtain:
\begin{align*}
f^\ast\omega & = \frac{1}{2} \times XdY \wedge \left(-\frac{XdX}{\sqrt{1-X^2-Y^2}} - \frac{YdY}{\sqrt{1-X^2-Y^2}}\right) \\
& + \frac{1}{2} \times Y\left(-\frac{XdX}{\sqrt{1-X^2-Y^2}} - \frac{YdY}{\sqrt{1-X^2-Y^2}}\right) \wedge dX + \frac{1}{2} \times \sqrt{1-X^2-Y^2}dX \wedge dY \\
& = -\frac{X^2dY \wedge dX}{2\sqrt{1-X^2-Y^2}} - \frac{Y^2dY \wedge dX}{2\sqrt{1-X^2-Y^2}} + \frac{1}{2} \times \sqrt{1-X^2-Y^2}dX \wedge dY \\
& = \frac{(X^2+Y^2)dX \wedge dY}{2\sqrt{1-X^2-Y^2}} + \frac{1}{2} \times \sqrt{1-X^2-Y^2}dX \wedge dY \\
& = \frac{dX \wedge dY}{2\sqrt{1-X^2-Y^2}}.
\end{align*}
This is a closed $2$-form on $\mathbb{R}^2$. Since $\mathbb{R}^2$ is contractible, By Poincar\'{e} lemma\footnote{See \cite[on page 94]{MR209411}.}there exists a $1$-form $\alpha$ such that:
\[
d\alpha = f^\ast\omega.
\]
Write $r(X,Y) = \frac{1}{2\sqrt{1-X^2-Y^2}}$. Then
\begin{align*}
\alpha & = \left(\int\limits_0^1 t \times r(tX,tY)dt\right)XdY - \left(\int\limits_0^1 t \times r(tX,tY)dt\right)YdX.
\end{align*}
It remains to evaluate
\[
\int\limits_0^1 \frac{tdt}{2\sqrt{1-t^2X^2-t^2Y^2}}.
\]
With $u = t^2$, $du = 2tdt$,
\[
\int\limits_0^1 \frac{tdt}{2\sqrt{1-t^2X^2-t^2Y^2}} = \frac{1}{4}\int\limits_0^1 \frac{du}{\sqrt{1-u(X^2+Y^2)}}.
\]
Let $v = 1- u(x^2+y^2)$. Then
\begin{align*}
\frac{1}{4}\int\limits_0^1 \frac{du}{\sqrt{1-u(X^2+Y^2)}} & = -\frac{1}{4(X^2+Y^2)}\int\limits_1^{1-(X^2+Y^2)} \frac{dv}{\sqrt{v}} \\
& = -\frac{1}{4(X^2+Y^2)} \left[2v^{1/2}\right]^{1-(X^2+Y^2)}_1 \\
& = \frac{1 - \sqrt{1-(X^2+Y^2)}}{2(X^2+Y^2)} \\
& = \frac{1}{2} \times \frac{1}{1+\sqrt{1-(X^2+Y^2)}}.
\end{align*}
Hence
\[
\alpha = \frac{1}{2} \times \frac{1}{1+\sqrt{1-(X^2+Y^2)}}(-YdX + XdY).
\]
Since $f$ is a diffeomorphism onto its image (the upper hemisphere $Z \geq 0$), we may identify $\alpha$ with a $1$-form on that hemisphere and then pull back along the Hopf map $\eta$. Using the standard coordinate expression for the Hopf map
\[
dX = 2zdx+2wdy+2xdz+2ydw, ~~~~~ dY = -2wdx+2zdy+2ydz-2xdw.
\]
Therefore
\begin{align*}
\eta^\ast\alpha & = \eta^\ast\left(\frac{1}{2} \times \frac{1}{1+\sqrt{1-(X^2+Y^2)}}(-YdX + XdY)\right) \\
& = \frac{1}{2} \times \frac{-2(yz-xw)(2zdx+2wdy+2xdz+2ydw) + 2(xz+yw)(-2wdx+2zdy+2ydz-2xdw)}{1 + \sqrt{1 - 4(xz+yw)^2 - 4(yz-xw)^2}}.
\end{align*}
We calculate:
\begin{align*}
-2(yz-xw)(2zdx+2wdy+2xdz+2ydw) & = (-2yz+2xw)(2zdx+2wdy+2xdz+2ydw) \\
& = -4yz^2dx-4yzwdy-4xyzdz-4y^2zdw \\
& + 4xzwdx+4xw^2dy+4x^2wdz+4xywdw. \\
2(xz+yw)(-2wdx+2zdy+2ydz-2xdw) & = (2xz+2yw)(-2wdx+2zdy+2ydz-2xdw) \\
& = -4xzwdx+4xz^2dy+4xyzdz-4x^2zdw \\
& -4yw^2dx+4yzwdy+4y^2wdz-4xywdw. \\
1 - 4(xz+yw)^2 - 4(yz-xw)^2 & = 1 - 4(x^2z^2+y^2w^2+2xyzw) - 4(y^2z^2+x^2w^2-2xyzw) \\
& = 1-4x^2z^2-4y^2w^2-4y^2z^2-4x^2w^2
\end{align*}
In the numerator, we get:
\[
-4yz^2dx-4yw^2dx+4xw^2dy+4xz^2dy+4x^2wdz+4y^2wdz-4y^2zdw-4x^2zdw
\]
which is equal to:
\[
4(z^2+w^2)(-ydx+xdy)+4(x^2+y^2)(wdz-zdw).
\]
In the denominator, we get:
\begin{align*}
1 - 4(xz+yw)^2 - 4(yz-xw)^2 & = 1 - 4(x^2z^2+y^2w^2+2xyzw) - 4(y^2z^2+x^2w^2-2xyzw) \\
& = 1-4x^2z^2-4y^2w^2-4y^2z^2-4x^2w^2.
\end{align*}
Set $A \coloneq x^2z^2+y^2w^2+y^2z^2+x^2w^2$. We want to compute $1-4A$. We have:
\begin{equation}\label{1-4a}
(x^2+y^2+z^2+w^2)^2 = x^4+y^4+z^4+w^4+2(x^2y^2+x^2z^2+x^2w^2+y^2z^2+y^2w^2+z^2w^2),
\end{equation}
and
\[
(x^2+y^2-z^2-w^2)^2 = x^4+y^4+z^4+w^4+2(x^2y^2+z^2w^2)-2(x^2z^2+x^2w^2+y^2z^2+y^2w^2).
\]
Therefore,
\begin{align*}
1 + (x^2+y^2-z^2-w^2)^2 & = 2(x^4+y^4+z^4+w^4)+4(x^2y^2+z^2w^2)
\end{align*}
By Equation (\ref{1-4a}), we have:
\[
2A = 1 - (x^4+y^4+z^4+w^4) - 2(x^2y^2+z^2w^2).
\]
So,
\[
1 - 4A = -1 + 2(x^4+y^4+z^4+w^4) + 4(x^2y^2+z^2w^2) = (x^2+y^2-z^2-w^2)^2.
\]
The pullback $\eta^\ast\alpha$ becomes:
\begin{align*}
\eta^\ast\alpha & =  \frac{2(z^2+w^2)(-ydx+xdy)+2(x^2+y^2)(wdz-zdw)}{1 + x^2+y^2-z^2-w^2} \\
& = \frac{2(1-x^2-y^2)(-ydx+xdy)+2(x^2+y^2)(wdz-zdw)}{2(x^2+y^2)} \\
& = \frac{(-ydx+xdy)}{x^2+y^2} + (ydx-xdy) + (wdz-zdw).
\end{align*}
We observe that $\eta^\ast \alpha$ defers from $\zeta$ by the exact $1$-form
\[
\frac{(-ydx+xdy)}{x^2+y^2}.
\]
Hence, $[\zeta] = [\eta^\ast \alpha]$.
\end{rem}
\begin{prop}\label{symmofhopfprop}
The group of symmetries of the Hopf map $\eta:S^3 \longrightarrow S^2$ is isomorphic to $Spin^c(3)$.
\end{prop}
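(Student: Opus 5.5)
I will make the group explicit: identify symmetries with pairs $(A,\psi)$ as in the remark opening this section, build an explicit surjective homomorphism onto this group, and compute its kernel. Construction \ref{symmofhopf} already identifies the symmetry group with $\widetilde{SO(3)}$. The lemma just proved shows that $\zeta$ is invariant under multiplication by unit quaternions. So I will produce the required symmetries directly from quaternion multiplication.

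First I will write down the homomorphism. Let $U(1)=\{\cos t+\sin t\,\mathsf{k}\}\subset S^3$ act on the right, as in Remark \ref{spacey2andl}, so that $\eta(qz)=\eta(q)$. I define
\[
\Phi: S^3\times U(1)\longrightarrow \widetilde{SO(3)},\qquad (u,z)\longmapsto \bigl(R_u,\ \psi_{u,z}\bigr),\qquad \psi_{u,z}(q)=u\,q\,z,
\]
where $R_u\in SO(3)$ is the rotation $v\mapsto u v\bar u$ of imaginary unit quaternions. With the real formula for $\eta$ written as $\eta(q)=\bar q\,\mathsf{k}\,q$ (or its conjugate variant, according to the chosen identification), I will check three things.
\begin{itemize}
\item $\eta(uqz)=R_u\eta(q)$, using only that $z$ commutes with $\mathsf{k}$.
\item $\psi_{u,z}^*\zeta=\zeta$. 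Invariance under $z$ holds because $\zeta$ is the connection form for this $U(1)$-action. Invariance under left multiplication by $u$ follows from the preceding lemma; if that lemma concerns right multiplication, I use the analogous computation, since $\zeta$ is the standard left-invariant form dual to the Hopf vector field.
\item $\Phi$ is a homomorphism. This holds because left and right multiplication commute.
\end{itemize}

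Second, I will show that $\Phi$ is surjective. Fix $(A,\psi)\in\widetilde{SO(3)}$ and choose $u\in S^3$ with $R_u=A$, using $Spin(3)=S^3\to SO(3)$. Then $\psi_{u,1}^{-1}\circ\psi$ is a connection-preserving automorphism of $\eta$ covering the identity. By Proposition \ref{secondprop}(ii) it is multiplication by some $z\in U(1)$, since $S^2$ is simply connected. Hence $\psi=\psi_{u,z}$.

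Third, I will compute the kernel. If $uqz=q$ for all $q$, then setting $q=1$ gives $u=\bar z\in U(1)$. Substituting back gives $u q \bar u=q$ for all $q$, so $u$ is central, i.e.\ $u=\pm1$ and $z=u^{-1}$. Thus $\ker\Phi=\{(1,1),(-1,-1)\}$, and therefore
\[
\widetilde{SO(3)}\cong S^3\times_{\{\pm1\}}U(1)=Spin^c(3).
\]
Comparing with Construction \ref{symmofhopf} confirms that this extension is the nontrivial class in $H^2_{SM}(SO(3);U(1))\cong\mathbb{Z}_2$, matching the $D_8$ computation there.

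The main obstacle is the surjectivity step. It requires the uniqueness part of Proposition \ref{secondprop}: automorphisms of $(S^3,\zeta)$ over $\mathrm{id}_{S^2}$ are exactly the constant gauge transformations. This rests on the fact that a flat-gauge function $S^2\to U(1)$ preserving $\zeta$ must have zero differential, hence is constant. A secondary obstacle is matching conventions, namely left versus right actions and the exact quaternionic form of $\eta$, against the coordinate formula in the background section and the invariance lemma. I will first fix these conventions by verifying that $\eta(q)=\bar q\,\mathsf{k}\,q$ reproduces the third component $-x^2-y^2+z^2+w^2$.
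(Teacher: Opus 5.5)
Your argument is correct, but it takes a different route from the paper. The paper's proof only cites Construction \ref{symmofhopf} and Proposition \ref{secondprop}(ii). There the symmetry group is identified, via the Pressley--Segal uniqueness statement, with the central extension $\widetilde{SO(3)}$, realised as the fibre product $SO(3)\times_{S^2}S^3$. Central extensions of $SO(3)$ by $U(1)$ are classified by $H^2_{SM}(SO(3);U(1))\cong\mathbb{Z}_2$, so it remains to show non-triviality. The paper does this with the Gysin sequence for $\mu:SO(3)\to S^2$ and by restricting the extension to $D_8$; $Spin^c(3)$ is then the only candidate.

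You instead use the fact that the Hopf bundle is homogeneous, $S^3\to S^3/U(1)$. Left multiplication by $Spin(3)=S^3$ then gives explicit connection-preserving lifts of every rotation. The uniqueness in Proposition \ref{secondprop}(ii) gives surjectivity of $S^3\times U(1)\to\widetilde{SO(3)}$; for automorphisms covering the identity this uses only connectedness of $S^2$. The kernel computation then finishes the proof.

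Your route produces an explicit isomorphism $S^3\times_{\{\pm1\}}U(1)\cong\widetilde{SO(3)}$. It also makes the classification of extensions and the $D_8$ cocycle tables unnecessary, since non-triviality becomes a consequence rather than an input. The paper's route uses only the invariant curvature form and cohomological classification, with no group structure on the total space. That is what it wants to imitate for the categorical Hopf map and $String(3)$, where nothing plays the role of quaternion multiplication on $\mathcal{P}$.

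Two things to fix when you write it out.

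\textbf{The formula for $\eta$.} With the paper's identification $(x,y,z,w)\leftrightarrow x\mathsf{i}+y\mathsf{j}+z\mathsf{k}+w$, the real formula for $\eta$ is $\eta(q)=q\mathsf{k}\bar q$, not $\bar q\mathsf{k}q$. Your formulas $\psi_{u,z}(q)=uqz$ and $\eta(uqz)=R_u\eta(q)$ hold only for $q\mathsf{k}\bar q$. The third-component check you propose cannot distinguish the two, since both give $-x^2-y^2+z^2+w^2$. Compare first components instead: $2(xz+yw)$ for $q\mathsf{k}\bar q$ versus $2(xz-yw)$ for $\bar q\mathsf{k}q$.

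\textbf{The invariance lemma.} Do not cite the paper's invariance lemma. Its proof computes right multiplication $q\mapsto qq_0$, under which $\zeta$ is not invariant. For example, right multiplication by $\mathsf{i}$ sends $\zeta$ to $-\zeta$, and the $dx$-coefficient computed there contains sign errors. What you need follows from $\zeta_q(v)=-\langle q\mathsf{k},v\rangle$, with $\langle\cdot,\cdot\rangle$ the Euclidean inner product on $\mathbb{H}=\mathbb{R}^4$. From this, $\zeta$ is invariant under left multiplication by every unit quaternion. Under right multiplication it is invariant only for the circle $\cos t+\sin t\,\mathsf{k}$. So left and right multiplication are not ``analogous'' here, and you should prove left-invariance directly.
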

\begin{proof}
This follows directly from Construction \ref{symmofhopf} and statement ($\rNum{2}$) in \ref{secondprop}.
\end{proof}
In the following, we recall the categorical analogue of Proposition \ref{secondprop}. For the definitions of anafunctors, transformations between anafunctors, and pullbacks along anafunctors, see \cite[Definition 2.3.1 on page 7 and Section 4.3]{MR3894086}. Following the terminology of \cite{MR3089401}, and as discussed similarly in Remark~\ref{butobunandconv}, we use the notions of principal $\mathbb{B}U(1)$-bundles and bundle gerbes interchangeably in what follows.
\begin{prop}\cite[Lemma 1.3.5 on page 17 and Theorem 1.3.6 on page 18]{Wal2007phd} \cite[Theorem 2.3 on page 101]{MR3013040} \cite[Corollary 2.3 on page 263]{MR2318389} \label{secondpropcat}
Let $M$ be a $2$-connected smooth manifold on which we have an integral closed $3$-form $\nu/{2\pi^2}$. We have the following statements: \\
$1)$ There exists a principal $\mathbb{B}U(1)$-bundle $J:\mathcal{Q} \longrightarrow M$ with a connection $\Omega$ whose curvature is $\nu$. \\
$2)$ Given two principal $\mathbb{B}U(1)$-bundles $J:\mathcal{Q} \longrightarrow M$ and $J^\prime:\mathcal{Q}^\prime \longrightarrow M$ with connections $\Omega$ and $\Omega^\prime$ with the
same curvature $\nu$, there exists an anafunctor $\Psi:\mathcal{Q} \longrightarrow \mathcal{Q}^\prime$ such that $[\Psi^\ast\Omega^\prime] = [\Omega]$. Moreover, $\Psi$ is unique up to a transformation, which is unique up to multiplication by elements of $U(1)$.
\end{prop}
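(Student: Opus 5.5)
The plan is to mirror the proof of Proposition \ref{secondprop} one categorical level up, working throughout with Deligne cocycles $(g_{ijk},A_{ij},B_i)$ as in Remark \ref{clager}. By Remark \ref{butobunandconv} we may pass freely between principal $\mathbb{B}U(1)$-bundles and bundle gerbes. So it suffices to prove both statements for bundle gerbes with connection and curving in the sense of Definition \ref{gerbeconn}. We then translate the answer back into the language of anafunctors.

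For statement $1)$, first choose a good open cover $\{U_i\}$ of $M$. Since $\nu/2\pi^2$ is integral, after normalisation its class lies in the image of $H^3(M;\mathbb{Z})\to H^3(M;\mathbb{R})$. Pick an integral Čech $3$-cocycle $n_{ijkl}$ representing a lift.

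Next run the Čech--de Rham zig-zag, exactly as in Figure \ref{cechderhamcom} but one degree higher. Write $\nu|_{U_i}=dB_i$ by the Poincaré lemma. Then $B_j-B_i$ is closed on $U_{ij}$, so $B_j-B_i=dA_{ij}$. Next, $A_{ij}+A_{jk}-A_{ik}$ is closed on triple overlaps, so it equals $d\log$ of a function $g_{ijk}$. Integrality of $\nu$ is exactly what allows $g_{ijk}$ to be chosen $U(1)$-valued with $\delta g=1$ (rather than $\delta \log g = n$ with $n$ non-trivial in $\mathbb{R}/\mathbb{Z}$). This yields a cocycle $(g_{ijk},A_{ij},B_i)$.

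From this cocycle build the gerbe: take $Y=\sqcup U_i$, let $L$ be the trivial line bundle on $U_{ij}$ with connection $d+A_{ij}$, let the product be given by $g_{ijk}$, and take curving $B_i$. Its curvature is $\nu$ by construction, and Remark \ref{butobunandconv} turns it into $J:\mathcal{Q}\to M$.

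For statement $2)$, consider the difference of the two Deligne cocycles associated to $(\mathcal{Q},\Omega)$ and $(\mathcal{Q}',\Omega')$ on a common refinement. Its curvature is zero, so it is a flat Deligne class, and flat classes are classified by $H^2(M;U(1))$. Since $M$ is $2$-connected, $H^1(M;\mathbb{Z})=H^2(M;\mathbb{Z})=0$, and hence $H^2(M;U(1))=0$ by universal coefficients. The difference is therefore a Deligne coboundary. Concretely, there are line bundles with connection $(P_{ij},\chi_{ij})$ and $1$-forms $\kappa_i$ trivialising it. These data are exactly a (connection-preserving) stable isomorphism, which is an anafunctor $\Psi$ with $[\Psi^\ast\Omega']=[\Omega]$.

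Uniqueness is handled the same way, one degree down. Two such $\Psi$ differ by a flat line bundle on $M$, classified by $H^1(M;U(1))=0$. So there is a transformation between them. Two transformations differ by a locally constant $U(1)$-valued function, and since $M$ is connected this is a single element of $U(1)$.

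The main obstacle is not the cohomological vanishing. It is matching the output of the cocycle argument with the paper's anafunctor and pullback formalism, namely showing that Deligne coboundaries correspond precisely to anafunctors $\Psi$ with $[\Psi^\ast\Omega']=[\Omega]$. For this step I would invoke Waldorf's equivalence between the $2$-category of gerbes with connection and the Deligne $2$-groupoid (\cite[Theorem 1.3.6]{Wal2007phd}), together with the correspondence of \cite[Section 7]{MR3089401}, rather than reprove it.
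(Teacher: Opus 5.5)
Your argument is correct and follows the same overall strategy as the paper: classify by cohomology, use $2$-connectedness to kill the relevant groups, and let Waldorf's results translate the outcome into anafunctors and transformations. The difference is which groups you make vanish, and that matters.

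The paper appeals only to two things. It uses the Dixmier--Douady classification by $H^3(M;\mathbb{Z})$, and it uses the connection-free torsor statement of \cite[Corollary 2.3]{MR2318389}, concluding from $H^2(M;\mathbb{Z})=0$. You instead work with gerbes with connection throughout:
\begin{itemize}
\item equal curvature leaves a flat ambiguity in $H^2(M;U(1))$, which combines torsion Dixmier--Douady classes with closed curvings modulo integral ones;
\item connection-preserving $1$-morphisms form a torsor over flat line bundles, i.e.\ over $H^1(M;U(1))$;
\item connection-preserving $2$-morphisms differ by locally constant functions.
\end{itemize}

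This gives you two things the paper's sketch does not supply. First, a connection-preserving $\Psi$ exists only if that flat ambiguity vanishes. Here it does, because $H^2(M;\mathbb{R})=0$ and $H^3(M;\mathbb{Z})$ has no torsion, but the paper leaves this implicit. Second, transformations that need not respect connections are unique only up to $C^\infty(M,U(1))$. So the paper's $H^2(M;\mathbb{Z})=0$ argument does not by itself yield the final clause ``unique up to multiplication by elements of $U(1)$'', whereas your $H^0$ step does.

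A few small repairs are needed.

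\emph{Universal coefficients.} The implication ``$H^1(M;\mathbb{Z})=H^2(M;\mathbb{Z})=0$, hence $H^2(M;U(1))=0$'' is not valid in general. Torsion in $H_2(M;\mathbb{Z})$ is invisible in $H^2(M;\mathbb{Z})$ but contributes to $H^2(M;U(1))\cong\mathrm{Hom}(H_2(M;\mathbb{Z}),U(1))$. Argue instead that $H_2(M;\mathbb{Z})=0$ by Hurewicz. Similarly, $H^1(M;U(1))\cong\mathrm{Hom}(\pi_1(M),U(1))=0$.

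\emph{Citation for connections.} The dictionary between connections on principal $\mathbb{B}U(1)$-bundles (Definition \ref{connectiononcatbundle}) and connections with curving on bundle gerbes is \cite[Example 5.1.10]{MR3894086}. The reference \cite[Section 7]{MR3089401} is connection-free.

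\emph{Normalisation.} Your ``after normalisation'' is doing real work. With the cocycle conventions of Remark \ref{clager}, curvatures have periods in $2\pi\mathbb{Z}$. So a form with $\nu/2\pi^2$ integral and $[\nu]\neq 0$ is never itself a curvature; for the basic gerbe on $S^3$ the curvature is $\nu/\pi$. You should say explicitly that statement $1)$ holds only after this rescaling. The paper glosses over this point too.
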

\begin{proof}
Both statements follow from the classification of bundle gerbes over $M$ by $H^3(S^3;\mathbb{Z})$ and the fact that $v/2\pi^2$ represents the image of a class of third integral cohomology in real cohomology. The uniqueness of a transformation between two such anafunctors $\mathcal{Q} \longrightarrow \mathcal{Q}$ is established using \cite[Corollary 2.3 on page 263]{MR2318389}. Specifically, the set of isomorphism classes of $1$-morphsims forms a torsor over the set of isomorphism classes of line bundles over $M$. Since $H^2(M;\mathbb{Z}) = 0$, the result follows.
\end{proof}
\begin{cor}\label{corsecondpropcat}
Let $M$ be a $2$-connected smooth manifold on which we have an integral closed $3$-form $\nu/{2\pi^2}$. Let $G$ be a Lie group acting on $M$, leaving $\nu/{2\pi^2}$ invariant. Given a principal $\mathbb{B}U(1)$-bundle $J:\mathcal{Q} \longrightarrow M$ with a connection $\omega$ whose curvature is $\nu$, there exists an anafunctor $\Phi:\mathcal{Q} \longrightarrow \mathcal{Q}$ covering the action of $G$ on $M$, such that $$[\Phi^\ast\Omega] = [\Omega].$$
\end{cor}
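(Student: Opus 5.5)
The corollary will follow from Proposition \ref{secondpropcat}, applied one group element at a time, in the same way Proposition \ref{secondprop} was used in Construction \ref{symmofhopf}.

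First, fix $g \in G$ and write $g: M \longrightarrow M$ for the corresponding diffeomorphism. Pull back $J:\mathcal{Q} \longrightarrow M$ along $g$. Concretely, on objects and morphisms we take the fibre products $g^\ast\mathcal{Q}_0 = M \times_M \mathcal{Q}_0$ and $g^\ast\mathcal{Q}_1 = M \times_M \mathcal{Q}_1$. This gives a principal $\mathbb{B}U(1)$-bundle $g^\ast\mathcal{Q} \longrightarrow M$ with the pulled-back connection $g^\ast\Omega$. Since pullback of $\gamma$-valued forms commutes with $D$ and with the curvature construction of Definition \ref{connectiononcatbundle}, the curvature of $g^\ast\Omega$ is $g^\ast\nu$. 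Because $G$ leaves $\nu/2\pi^2$ invariant, $g^\ast\nu = \nu$.

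Second, by Proposition \ref{secondpropcat}(2) applied to $(\mathcal{Q},\Omega)$ and $(g^\ast\mathcal{Q}, g^\ast\Omega)$, both with curvature $\nu$, there is an anafunctor $\Psi_g:\mathcal{Q} \longrightarrow g^\ast\mathcal{Q}$ with $[\Psi_g^\ast(g^\ast\Omega)] = [\Omega]$. Compose $\Psi_g$ with the canonical projection functor $\lambda_g: g^\ast\mathcal{Q} \longrightarrow \mathcal{Q}$, which covers $g$. This gives $\Phi_g = \lambda_g \circ \Psi_g$, an anafunctor covering $g$. Exactly as in the classical computation in Construction \ref{symmofhopf}, with the functoriality of pullbacks along anafunctors from \cite[Section 4.3]{MR3894086},
\[
[\Phi_g^\ast\Omega] = [\Psi_g^\ast \lambda_g^\ast \Omega] = [\Psi_g^\ast (g^\ast\Omega)] = [\Omega].
\]

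Third, we assemble the $\Phi_g$ into a single anafunctor covering the whole action $G \times M \longrightarrow M$, for instance $\Phi: G \times \mathcal{Q} \longrightarrow \mathcal{Q}$. The plan is to run the first two steps in families over $G$. Pull $\mathcal{Q}$ back along the projection $pr_M: G\times M \to M$ and along the action map $a: G\times M \to M$. Both are $\mathbb{B}U(1)$-bundles over $G \times M$, with connections of the same curvature $a^\ast\nu = pr_M^\ast\nu$, where the equality is the invariance of $\nu$. Applying the classification of Proposition \ref{secondpropcat} over $G \times M$ then yields the anafunctor directly.

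The main obstacle is this last step. $G \times M$ need not be $2$-connected, so Proposition \ref{secondpropcat} does not apply verbatim. Existence of a $1$-morphism still holds: the two gerbes have the same Dixmier--Douady class, since $a$ and $pr_M$ agree on $\{e\}\times M$ and $M$ is $2$-connected, and their curvatures coincide. The difficulty is that the difference of connections may differ by a flat class in $H^2(G\times M;\mathbb{R})/H^2(G\times M;\mathbb{Z})$. If that occurs, I would fall back to the pointwise statement, which is all the corollary literally asserts for each element of $G$. Its only inputs are Proposition \ref{secondpropcat} and the invariance $g^\ast\nu=\nu$, and uniqueness up to transformation is used only to know that $\Phi_g$ is well defined up to $U(1)$.
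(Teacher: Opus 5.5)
Your first two steps are the paper's proof. For each fixed $g\in G$ you pull $J$ back along $g$, apply Proposition~\ref{secondpropcat}(2) to get a connection-preserving $\Psi:\mathcal{Q}\to g^\ast\mathcal{Q}$, and set $\Phi=\lambda\circ\Psi$ with $\lambda:g^\ast\mathcal{Q}\to\mathcal{Q}$ the canonical map. Your explicit check that $g^\ast\Omega$ has curvature $g^\ast\nu=\nu$ is a step the paper leaves implicit. With it the corollary, which is a statement about each $g$ separately, is proved.

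Your third step should be removed. Both claims it rests on are false, and the obstruction is not a flat class.

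\emph{Curvature.} Invariance gives $a^\ast\nu|_{\{g\}\times M}=\nu$ on each slice. It does not give $a^\ast\nu=pr_M^\ast\nu$ on $G\times M$, because $a^\ast\nu$ generally has components along $G$.

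\emph{Dixmier--Douady class.} That $a$ and $pr_M$ agree on $\{e\}\times M$ says nothing about restrictions to $G\times\{m_0\}$. Let $c$ be the Dixmier--Douady class and $o:G\to M$, $g\mapsto g\cdot m_0$, the orbit map. Then $a^\ast c$ restricts to $o^\ast c$ on $G\times\{m_0\}$, whereas $pr_M^\ast c$ restricts to $0$.

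\emph{The case the paper cares about.} Take $G=Spin(3)$ acting on $S^3$ by left multiplication. Then $o$ is a diffeomorphism, so $o^\ast c$ generates $H^3(G;\mathbb{Z})$ and $o^\ast\nu$ is a volume form on $G$. Hence $a^\ast\mathcal{Q}$ and $pr_M^\ast\mathcal{Q}$ have different Dixmier--Douady classes. There is no anafunctor between them over $G\times M$ at all, connection-preserving or not.

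This is the categorical analogue of the computation in Construction~\ref{symmofhopf} that $\mu^\ast$ of the Hopf class is non-zero in $H^2(SO(3);\mathbb{Z})$, i.e.\ that the $SO(3)$-action does not lift to $S^3$. It is exactly why the symmetries have to be organised into the categorical group $CG$ of Remark~\ref{hopesecondpropproof} rather than into a lifted $G$-action. So the pointwise statement is not a fallback but the only correct form of the corollary.
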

\begin{proof}
Let $g \in G$. The action of $g$ defines a map
\[
g:M \longrightarrow M,
\]
and hence induces the pullback map $g^\ast J:g^\ast \mathcal{Q} \longrightarrow M$ of $J$ along $g$. By Proposition \ref{secondpropcat}, we obtain a connection-preserving anafunctor
\[
\Psi:\mathcal{Q} \longrightarrow g^\ast \mathcal{Q}
\]
which fits into a commutative diagram
\[
\begin{tikzcd}
\mathcal{Q} \arrow[dr,"J"'] \arrow[rr,"\Psi"] & & g^\ast \mathcal{Q} \arrow[dl] \\
& M &
\end{tikzcd}.
\]
Now consider the pullback\footnote{See \cite[page 54]{MR2709030} and \cite[page 258]{MR2318389} for the definition of categorical pullbacks.}diagram
\[
\begin{tikzcd}
g^\ast \mathcal{Q} \arrow[r,"\lambda"] \arrow[d,"\theta"'] & \mathcal{Q} \arrow[d,"J"] \\
M \arrow[r,"g"'] & M
\end{tikzcd}
\]
and define $\Phi = \lambda \circ \Psi$. Then
\begin{align*}
(\lambda \circ \Psi)^\ast \Omega & = \Psi^\ast \circ \lambda^\ast (\Omega) \\
& = \Psi^\ast (g^\ast \Omega) \\
& = \Omega.
\end{align*}
Moreover, $\Phi$ makes the diagram
\[
\begin{tikzcd}
\mathcal{Q} \arrow[d,"J"'] \arrow[r,"\Phi"] & \mathcal{Q} \arrow[d,"J"] \\
M \arrow[r,"g"'] & M
\end{tikzcd}
\]
commute.
\end{proof}
\begin{rem}\label{hopesecondpropproof}
By Proposition \ref{secondpropcat}, we obtain a principal $\mathbb{B}U(1)$-bundle $J:\mathcal{Q} \longrightarrow M$. Following arguments analogous to those in \cite[Propositions 4.4.2 on page 47 and 4.5.3 on page 52]{MR900587} and Construction \ref{symmofhopf}, we define a categorical group $CG$ as follows. \\
$\bullet$ $\mathsf{Obj}(CG)$: pairs $(g,\Psi)$, where $g \in G$ and $\Psi:\mathcal{Q} \longrightarrow \mathcal{Q}$ is an anafunctor obtained via Corollary \ref{corsecondpropcat}. \\
$\bullet$ $\mathsf{Mor}(CG)$: maps $F:(g,\Psi) \longrightarrow (g,\Psi^\prime)$, where $F$ is a transformation
\[
\begin{tikzcd}
\mathcal{Q} \arrow[rr,bend left=45,"\Phi",""{name=u1,below}] \arrow[rr,bend right=45,"\Psi^\prime"',""{name=u2,above}] \arrow[Rightarrow,from=u1,to=u2,"F"] & & \mathcal{Q}
\end{tikzcd}
\]
between two anafunctors $\Psi$ and $\Psi^\prime$. \\
The categorical group structure in $CG$ is as follows: \\
$\bullet$ On objects, the tensor product is given by composition of anafunctors:
\[
(g,\Psi) \otimes (g^\prime,\Psi^\prime) = (g^\prime \cdot g,\Psi^\prime \circ \Psi).
\]
\[
\begin{tikzcd}
\mathcal{Q} \arrow[r,"\Psi"] \arrow[d,"J"'] & \mathcal{Q} \arrow[r,"\Psi^\prime"] \arrow[d,"J"] & \mathcal{Q} \arrow[d,"J"] \\
M \arrow[r,"g"'] & M \arrow[r,"{g^\prime}"'] & M
\end{tikzcd}
\]
$\bullet$ On morphisms, the tensor product is given by composition of transformations:
\[
\begin{tikzcd}
\mathcal{Q} \arrow[rr,bend left=45,"\Phi",""{name=u1,below}] \arrow[rr,bend right=45,"\Psi^\prime"',""{name=u2,above}] \arrow[Rightarrow,from=u1,to=u2,"F"] & & \mathcal{Q}
\end{tikzcd} ~~ \otimes ~~
\begin{tikzcd}
\mathcal{Q} \arrow[rr,bend left=45,"\Phi^\prime",""{name=u1,below}] \arrow[rr,bend right=45,"\Psi^{\prime\prime}"',""{name=u2,above}] \arrow[Rightarrow,from=u1,to=u2,"F^\prime"] & & \mathcal{Q}
\end{tikzcd} =
\begin{tikzcd}
\mathcal{Q} \arrow[rr,bend left=45,"\Phi",""{name=u1,below}] \arrow[rr,bend right=45,"\Psi^{\prime\prime}"',""{name=u2,above}] \arrow[Rightarrow,from=u1,to=u2,"F^\prime \circ F"] & & \mathcal{Q}
\end{tikzcd}
\]
In \cite{MR2366945}, authors constructed a Lie $2$-group model for $String(n)$. For a simply connected, compact, simple Lie group $G$, they defined a categorical group $\mathcal{P}_k G$ for each $k \in \mathbb{Z}$. In our case, we take $k=1$, yielding a categorical group $\mathcal{P}G$ with objects $P_0 G$ and morphisms $P_0 G \ltimes \widehat{\Omega G}$. Here, $P_0 G$ denotes the space of smooth paths in $G$ starting at $1 \in G$, and $\widehat{\Omega G}$ denotes the central extension of the loop group $\Omega G$ by $U(1)$, as constructed in \cite{MR2366945}. \\
Now we define a map from $\mathcal{P}G$ to $CG$ as follows: \\
$\bullet$ On objects: a path $\gamma:[0,1] \to G$ with $\gamma(0) = 1$ and $\gamma(1) = g$ is sent to $(g,\Psi)$, where $\Psi$ is an anafunctor $\mathcal{Q} \longrightarrow \mathcal{Q}$ that we expect can be obtained via the parallel $2$-transport formalism developed in \cite{MR3917427} and \cite{MR3084724}. To clarify this, we describe what this formalism provides and what remains to be constructed. \\
For each path $\gamma:1 \longrightarrow g$ in $G$ and for each $m \in M$, we obtain a path
$$\gamma \cdot m:1 \cdot m \longrightarrow g \cdot m$$
in $M$. Consequently, by \cite{MR3917427}, we obtain an anafunctor
\[
\tau_m:\mathcal{Q}_m \longrightarrow \mathcal{Q}_{g \cdot m}.
\]
In general, this yields a family of anafunctors $(\tau_m)_{m \in M}$. What remains is to determine whether these local anafunctors can be assembled coherently into a single anafunctor
$$\Psi:\mathcal{Q} \longrightarrow \mathcal{Q}.$$
$\bullet$ On morphisms: a pair $(\gamma,\hat{l})$ is sent to the transformation $F$, which we again conjecture can be defined using the parallel $2$-transport formalism of \cite{MR3917427} applied to the map $(\gamma,\hat{l})$. The source and target of $(\gamma,\hat{l})$ are:
\begin{align*}
\mathsf{s}\left((\gamma,\hat{l})\right) & = \gamma, \\
\mathsf{t}\left((\gamma,\hat{l})\right) & = \partial(\hat{l})\gamma
\end{align*}
as in \cite[Proposition 4.1 on page 117]{MR2366945}, where $\partial$ is defined by the composition
\[
\begin{tikzcd}
\widehat{\Omega G} \arrow[r] & \Omega G \arrow[r,hook] & P_0 G
\end{tikzcd}.
\]
The construction of $\mathcal{P}G$ yields a multiplicative\footnote{See \cite[page 314]{MR2610397} and \cite[Proposition 5.2 on page 597]{MR2174418}}bundle gerbe over G, as discussed in \cite{MR2366945}.
\end{rem}
In what follows, we provide the conjectural construction of $String(3)$:
\begin{const}\label{catextensionofspin}
We now specialise Proposition \ref{secondpropcat} to the case $$G = Spin(3), ~~~~~ M = S^3, ~~~~~ J = H:\mathcal{P} \longrightarrow S^3$$
where $S^3$ is equipped with the volume form
\[
\nu=xdy \wedge dz \wedge dw + ydz \wedge dw \wedge dx + zdw \wedge dx \wedge dy + wdx \wedge dy \wedge dz.
\]
The integral of $\nu$ over $S^3$ is
\[
\int\limits_{S^3} \nu = 2\pi^2.
\]
Hence the normalised $3$-form $\nu/2\pi^2$ represents the positive generator of $$H^3(S^3;\mathbb{Z}) \cong \mathbb{Z},$$
and corresponds to the basic bundle gerbe over $S^3$. \\
Equipping this bundle gerbe with a connection $\Omega$ whose curvature is $\nu$, one may adapt the arguments of Construction \ref{symmofhopf} to to obtain a new interpretation of $String(3)$. The topological and categorical structure of the categorical group $CSpin(3)$, arising from Proposition \ref{secondpropcat} and Remark \ref{hopesecondpropproof}, can be described as follows: \\
$\bullet$ Topological structure can be identified with topology of $\mathcal{P}Spin(3)$, as discussed in \cite{MR2366945}. \\
$\bullet$ Categorical group structure can be regarded as a categorical group with the group of objects as pairs $(g,\Psi)$, where $g \in Spin(3)$ and $\Psi:\mathcal{P} \longrightarrow \mathcal{P}$ is a an anafunctor covering the action of $g$ and preserving the connection on $H$, and the group of morphisms as transformations $F:(g,\Psi) \longrightarrow (g,\Psi^\prime)$:
\[
\begin{tikzcd}
\mathcal{P} \arrow[rr,bend left=45,"{\Psi}",""{name=u1,below}] \arrow[rr,bend right=45,"{\Psi^\prime}"',""{name=u2,above}] \arrow[Rightarrow,from=u1,to=u2,"F"] & & \mathcal{P}
\end{tikzcd}
\]
The monoidal structure of $CSpin(3)$ can be obtained similarly as in Remark \ref{hopesecondpropproof}.
\end{const}
\begin{rem}
We note that in our conjectural construction, Construction \ref{catextensionofspin}, our aim is to generalise the constructions given in \cite{MR900587}. Taking into account the approaches developed in \cite{MR3535115} and \cite{MR4268834}, we instead focus on providing a description of the categorical group $String(3)$ as categorical group of symmetries of the categorical Hopf map.
\end{rem}

\section{Questions and future work}
Below, we outline several questions and directions for further investigation.
\begin{ques}
For the connection on our basic bundle gerbe $H:\mathcal{P} \longrightarrow S^3$, one may employ the explicit formulas given in \cite{MR3894086}. Once a connection and curving are specified on $H$, this construction yields a connection on the categorical Hopf map. As shown in Propositions \ref{secondprop} and \ref{firstprop}, our results do not depend on the explicit expression for the connection itself. More precisely, what is required is only the associated $2$-form, namely, the curvature of a potential connection. The same observation holds in the categorical setting. \\
Nevertheless, it would be worthwhile to compute and record an explicit formula for the connection on the categorical Hopf map.
\end{ques}
\begin{ques}
One can work through the map $H:\mathcal{P} \longrightarrow S^3$ to obtain finite categorical subgroups of $String(3)$. This may be viewed as a potential categorical analogue of the McKay correspondence. We have finite subgroups of $Spin(3)$. one may attempt to extend these by suitable finite categorical subgroups of $\mathbb{B}U(1)$ to realise corresponding finite categorical subgroups of $String(3)$. \\
These finite categorical subgroups can be thought of as categorical groups of symmetries of categorical analogues of the Platonic solids. Related ideas and examples appear in the work of Epa and Ganter \cite{MR3912053}.  
\end{ques}
\begin{rem}
As it has been discussed in \cite{MR3089401} and \cite{MR2805195}, there are certain differences between Waldorf's and Wockel's formalism for constructing principal $2$-bundles. Although we have followed Wockel's approach, the categorical Hopf map could alternatively be reconstructed using the framework of \cite{MR3089401}. Since our construction does not depend heavily on these differences, we have employed Waldorf’s formalism whenever it proved convenient.
\end{rem}

% Appendices
\begin{appendices}

\section{Calculations and constructions}
\begin{rem}\label{kernelofhopf}
The Hopf map $\eta:S^3 \longrightarrow S^2$ is given by
\[
\eta(x,y,z,w) = \left(2(xz+yw),2(yz-xw),-x^2-y^2+z^2+w^2\right).
\]
This expression arises from the definition
\[
q\mathsf{k}\bar{q},
\]
where $q=x+y\mathsf{i}+z\mathsf{j}+w\mathsf{k}$. The kernel of the Hopf map is therefore
\begin{align*}
K & = \left\{(0,0,\sin(t),\cos(t)) \in S^3 ~ \big\vert ~ t \in \mathbb{R}\right\} \\
& = \cos(t) + \sin(t)\mathsf{k},
\end{align*}
with the base-point in $S^2$ is chosen to be $(0,0,1)$.
\end{rem}
\begin{rem}
We consider the $2$-form
\[
\omega = \frac{1}{2}\left(XdY \wedge dZ + YdZ \wedge dX + ZdX \wedge dY\right)
\]
on $S^2$, together with the $1$-form
\[
\zeta = -ydx+xdy-wdz+zdw
\]
on $S^3$. Let $\eta$ denote the Hopf map
\[
\eta(x,y,z,w) = \left(2(xz+yw),2(yz-xw),-x^2-y^2+z^2+w^2\right).
\]
as introduced in the background section. We anticipate a relation between $\eta^{\ast}\omega$ and $d\zeta$. To this end, we first compute the pullback $2$-form $\eta^{\ast}\omega$.
Define
\begin{align*}
X & = 2(xz+yw) \\
Y & = 2(yz-xw) \\
Z & = -x^2-y^2+z^2+w^2.
\end{align*}
Differentiating, we obtain
\begin{align*}
dX & = 2zdx+2wdy+2xdz+2ydw, \\
dY & = -2wdx+2zdy+2ydz-2xdw, \\
dZ & = -2xdx-2ydy+2zdz+2wdw.
\end{align*}
Substituting into $\omega$, we find
\begin{align*}
\eta^\ast\omega & = (xz+yw)(-2wdx+2zdy+2ydz-2xdw) \wedge (-2xdx-2ydy+2zdz+2wdw) \\
& + (yz-xw)(-2xdx-2ydy+2zdz+2wdw) \wedge (2zdx+2wdy+2xdz+2ydw) \\
& + \frac{1}{2}(-x^2-y^2+z^2+w^2)(2zdx+2wdy+2xdz+2ydw) \wedge (-2wdx+2zdy+2ydz-2xdw).
\end{align*}
Collecting coefficients, we obtain: \\
$\bullet$ Coefficient of $dx \wedge dy$ in $\eta^\ast\omega$ is
\begin{align*}
& (xz+yw)(4yw+4xz) + (yz-xw)(-4xw+4yz) + (-x^2-y^2+z^2+w^2)(2z^2+2w^2) \\
& = 4xyzw+4x^2z^2+4y^2w^2+4xyzw-4xyzw+4y^2z^2+4x^2w^2-4xyzw-2x^2z^2 \\
& -2x^2w^2-2y^2z^2-2y^2w^2+2z^4+2z^2w^2+2z^2w^2+2w^4 = 2x^2z^2+2y^2w^2+2y^2z^2 \\
& +2x^2w^2+2z^4+2z^2w^2+2z^2w^2+2w^4 \\
& = 2(z^2+w^2).
\end{align*}
$\bullet$ Coefficient of $dx \wedge dz$ in $\eta^\ast\omega$ is
\begin{align*}
& (xz+yw)(-4zw+4xy) + (yz-xw)(-4x^2-4z^2) + (-x^2-y^2+z^2+w^2)(2yz+2xw) = \\
& -4xz^2w+4x^2yz-4yzw^2+4xy^2w-4x^2yz-4yz^3+4x^3w+4xz^2w-2x^2yz-2x^3w \\
& -2y^3z-2xy^2w+2yz^3+2xz^2w+2yzw^2+2xw^3 = -2yzw^2+2xy^2w-2yz^3+2x^3w \\
& -2x^2yz-2y^3z+2xz^2w+2xw^3 = 2z^2(xw-yz)+2y^2(xw-yz)+2x^2(xw-yz)+2w^2(xw-yz) \\
& = 2(xw-yz).
\end{align*}
$\bullet$ Coefficient of $dx \wedge dw$ in $\eta^\ast\omega$ is
\begin{align*}
& (xz+yw)(-4w^2-4x^2)+(yz-xw)(-4xy-4zw) + (-x^2-y^2+z^2+w^2)(-2xz+2yw) \\
& = -4xzw^2-4x^3z-4yw^3-4x^2yw-4xy^2z-4yz^2w+4x^2yw+4xzw^2+2x^3z-2x^2yw \\
& +2xy^2z-2y^3w-2xz^3+2yz^2w-2xzw^2+2yw^3 = -2x^2(xz+yw) - 2y^2(xz+yw) - 2z^2(xz+yw) \\
& -2w^2(xz+yw) \\
& = -2(xz+yw).
\end{align*}
$\bullet$ Coefficient of $dy \wedge dz$ in $\eta^\ast\omega$ is
\begin{align*}
& (xz+yw)(4z^2+4y^2) + (yz-xw)(-4xy-4zw) + (-x^2-y^2+z^2+w^2)(2yw-2xz) \\
& = 4xz^3+4xy^2z+4yz^2w+4y^3w-4xy^2z-4yz^2w+4x^2yw+4xzw^2-2x^2yw \\
& +2x^3z-2y^3w+2xy^2z+2yz^2w-2xz^3+2yw^3-2xzw^2 \\
& = 2(xz+yw).
\end{align*}
$\bullet$ Coefficient of $dy \wedge dw$ in $\eta^\ast\omega$ is
\begin{align*}
& (xz+yw)(4zw-4xy)+(yz-xw)(-4y^2-4w^2)+(-x^2-y^2+z^2+w^2)(-2xw-2yz) \\
& = 4xz^2w-4x^2yz+4yzw^2-4xy^2w-4y^3z-4yzw^2+4xy^2w+4xw^3+2x^3w+2x^2yz \\
& +2xy^2w+2y^3z-2xz^2w-2yz^3-2xw^3-2yzw^2 = 2x^2(xw-yz) + 2y^2(xw-yz) + 2z^2(xw-yz) \\
& +2w^2(xw-yz) \\
& = 2(xw-yz).
\end{align*}
$\bullet$ Coefficient of $dz \wedge dw$ in $\eta^\ast\omega$ is
\begin{align*}
& (xz+yw)(4yw+4xz) + (yz-xw)(4yz-4xw) + (-x^2-y^2+z^2+w^2)(-2x^2-2y^2) \\
& = 4xyzw+4x^2z^2+4xyzw+4y^2w^2+4y^2z^2-4xyzw-4xyzw+4x^2w^2+2x^4+2x^2y^2 \\
& +2x^2y^2+2y^4-2x^2z^2-2y^2z^2-2x^2w^2-2y^2w^2 = 2x^2z^2+2y^2w^2+2y^2z^2 \\
& +2x^2w^2+2x^4+2x^2y^2+2x^2y^2+2y^4 \\
& = 2(x^2+y^2).
\end{align*}
Therefore, we obtain:
\begin{align*}
\eta^\ast\omega & = 2(z^2+w^2)dx \wedge dy + 2(xw-yz)dx \wedge dz - 2(xz+yw)dx \wedge dw \\
& + 2(xz+yw)dy \wedge dz + 2(xw-yz)dy \wedge dw + 2(x^2+y^2)dz \wedge dw.
\end{align*}
On $S^3$, we have the identity:
\[
xdx+ydy+zdz+wdw=0.
\]
Using this relation to simplify $\eta^\ast\omega$, we get:
\begin{align*}
\eta^\ast\omega & = 2(x^2+y^2+z^2+w^2)(dx \wedge dy + dz \wedge dw) \\
& = 2dx \wedge dy + 2dz \wedge dw \\
& = d\zeta.
\end{align*}
\end{rem}
\begin{const}\label{fouropens}
In Construction \ref{cathopf}, we considered the cover of $S^2$ consisting of six opens. This cover is compatible with the action of the octahedral group $\mathsf{Oct}$ on $S^2$ in the sense that $\mathsf{Oct}$ permutes the open sets in this cover. \\
We now introduce an alternative cover of $S^2$ consisting of four opens, obtained from the projections of the faces of a tetrahedron inscribed in $S^2$, to $S^2$. This cover is compatible with the action of the tetrahedral group $\mathsf{Tet}$ on $S^2$ in the sense that elements of $Tet$ permute the open sets in this cover. \\
Fix $\epsilon > 0$ and define the cover $\left\{X_1,X_2,X_3,X_4\right\}$ of $S^2$ as follows:
\[
X_1 = \left\{(X,Y,Z) \in S^2 ~ \big\vert ~ z < \epsilon\right\}
\]
The remaining opens $X_2$, $X_3$ and $X_4$ correspond to small neighbourhoods of the spherical projections of three faces of the tetrahedron. In spherical coordinates, we may describe them by
\begin{align*}
X_2 & = \left\{\bigl(\cos(\theta)\sin(\phi),\sin(\theta)\sin(\phi),\cos(\phi)\bigr) \in S^2 ~ \big\vert ~ -\epsilon < \theta < \frac{2\pi}{3} + \epsilon ~ \text{and} ~ -\epsilon < \phi < \frac{2\pi}{3} + \epsilon \right\}, \\
X_3 & = \left\{\bigl(\cos(\theta)\sin(\phi),\sin(\theta)\sin(\phi),\cos(\phi)\bigr) \in S^2 ~ \big\vert ~ \frac{2\pi}{3} - \epsilon < \theta < \frac{4\pi}{3} + \epsilon ~ \text{and} ~ -\epsilon < \phi < \frac{2\pi}{3} + \epsilon\right\}, \\
X_4 & = \left\{\bigl(\cos(\theta)\sin(\phi),\sin(\theta)\sin(\phi),\cos(\phi)\bigr) \in S^2 ~ \big\vert ~ \frac{4\pi}{3} - \epsilon < \theta < 2\pi + \epsilon ~ \text{and} ~ -\epsilon < \phi < \frac{2\pi}{3} + \epsilon\right\}.
\end{align*}
\end{const}

\end{appendices}

%bibliography
\newpage
\bibliographystyle{alpha}
\bibliography{biblio}
\end{document}